\renewcommand{\marginpar}[1]{}
\let\seq\vec
\newtheorem{theorem}{Theorem}
\newtheorem{lemma}[theorem]{Lemma}
\newtheorem{proposition}[theorem]{Proposition}
\newtheorem{corollary}[theorem]{Corollary}
\theoremstyle{definition}
\newtheorem{definition}[theorem]{Definition}
\theoremstyle{remark}
\newtheorem{remark}[theorem]{Remark}
\newtheorem{example}[theorem]{Example}
\newcommand{\omittext}[1]{}
\newcommand{\N}{\mathbb{N}}
\newcommand{\Q}{\mathbb{Q}}
\newcommand{\R}{\mathbb{R}}
\newcommand{\I}{\mathbb{I}}
\renewcommand{\H}{\mathbb{H}}
\newcommand{\Hl}{\mathbb{H}}
\renewcommand{\P}{\mathcal{P}}
\renewcommand{\Pr}{\mathbb{P}}
\newcommand{\Ex}{\mathbb{E}}
\newcommand{\abs}{\mathrm{abs}}
\newcommand{\id}{\mathrm{id}}
\newcommand{\diam}{\mathrm{diam}}
\newcommand{\dom}{\mathrm{dom}}
\newcommand{\rng}{\mathrm{rng}}
\newcommand{\supp}{\mathrm{supp}}
\newcommand{\fto}{\rightarrow}
\newcommand{\mvfto}{\rightrightarrows}
\newcommand{\pfto}{\rightharpoonup}
\newcommand{\mfto}{\rightsquigarrow}
\newcommand{\psfto}{\rightharpoonup}
\newcommand{\clN}{\,\overline{\!N}}
\newcommand{\clB}{\overline{B}}
\newcommand{\calT}{\mathcal{T}}
\newcommand{\calV}{\mathcal{V}}
\newcommand{\calX}{\mathcal{X}}
\newcommand{\opset}{\mathcal{O}}
\newcommand{\clset}{\mathcal{A}}
\newcommand{\cpset}{\mathcal{K}}
\newcommand{\borset}{\mathcal{B}}
\newcommand{\ctsfn}{\mathcal{C}}
\newcommand{\rand}{\mathcal{R}}
\newcommand{\meas}{\mathcal{M}}
\newcommand{\prob}{\mathcal{P}}
\newcommand{\standardtype}[1]{\mathbb{#1}}
\newcommand{\standardfunctor}[1]{\mathcal{#1}}
\newcommand{\usertype}[1]{{#1}}
\renewcommand{\usertype}[1]{\mathbb{#1}}
\newcommand{\tpBl}{\standardtype{B}}
\newcommand{\tpSi}{\standardtype{S}}
\newcommand{\tpNat}{\standardtype{N}}
\newcommand{\tpHl}{\standardtype{H}_<}
\renewcommand{\tpHl}{\standardtype{H}}
\newcommand{\tpIv}{\standardtype{I}}
\newcommand{\tpRe}{\standardtype{R}}
\newcommand{\tpMe}{\standardfunctor{M}}
\newcommand{\tpMon}{\standardfunctor{M}}
\newcommand{\tpPr}{\standardfunctor{P}}
\newcommand{\tpCts}{\standardfunctor{C}}
\newcommand{\tpRV}{\standardfunctor{R}}
\newcommand{\tpOp}{\standardfunctor{O}}
\newcommand{\tpCl}{\standardfunctor{A}}
\newcommand{\tpR}{\usertype{R}}
\newcommand{\tpT}{\usertype{T}}
\newcommand{\tpX}{\usertype{X}}
\newcommand{\tpY}{\usertype{Y}}
\newcommand{\tpZ}{\usertype{Z}}
\let\seq\vec
\newcommand{\Andre}{{Andr\'e}}
\newcommand{\Holder}{{H\"older}}
\newcommand{\Ito}{{It\=o}}
\newcommand{\Levy}{L\'evy}
\newcommand{\chif}{\raisebox{0.44ex}{\ensuremath{\chi}}}
\newcommand{\true}{{\sf{T}}}
\newcommand{\false}{{\sf{F}}}
\newcommand{\indet}{\bot}
\newcommand{\init}{\mathrm{init}}
\DeclareMathOperator{\sign}{\mathrm{sign}}
\begin{document}

\title{Computable Stochastic Processes}
\author{Pieter Collins\\Department of Knowledge Engineering\\Maastricht University\\\texttt{pieter.collins@maastrichtuniversity.nl}}
\date{15 September 2014}
%\date{Final Draft \today}
%\institute
\maketitle

\begin{abstract}
The aim of this paper is to present an elementary computable theory of probability, random variables and stochastic processes.
The probability theory is based on existing approaches using valuations and lower integrals.
Various approaches to random variables are discussed, including the approach based on completions in a Polish space.
We apply the theory to the study of stochastic dynamical systems in discrete-time, and give a brief exposition of the Wiener process as a foundation for stochastic differential equations.
The theory is based within the framework of type-two effectivity, so has an explicit direct link with Turing computation, and is expressed in a system of computable types and operations, so has a clean mathematical description.
\end{abstract}

%\subsubsection*{References} \noindent Edalat~\cite{Edalat1995DSM,Edalat1995DTI,Edalat2009}, \cite{DingWu2005}, \cite{Weihrauch1999}. \cite{AlvarezManillaEdalatSahebDjahromi2000}, \cite{JonesPlotkin1989}, \cite{GoubaultLarrecq2005}.

\section{Introduction}

In this paper, we present a computable theory of probability, random variables and stochastic processes, with the aim of providing a theoretical foundation for the rigorous numerical analysis of discrete-time continuous-state Markov chains and stochastic differential equations.
The first part of the paper provide an exposition of the approach to probability distributions using valuations and the development of integrals of positive lower-semicontinuous and of bounded continuous functions, and on the approach to random variables as limits of almost-everywhere defined continuous partial functions.
In the second part, we show that our approach allows one to very quickly derive computability results for discrete-time stochastic processes.
In the third part, we provide a new construction of the Wiener process in which sample paths are effectively computable, and use this to show that the solutions to stochastic differential equations can be effectively computed.

An early approach to constructive measure theory was developed in~\cite{BishopCheng1972}; see also~\cite{BishopBridges1985}.
The standard approach to a constructive theory of probability measures, as developed in~\cite{JonesPlotkin1989,Edalat1995DTI,SchroderSimpson2006,Escardo2009}, is through \emph{valuations}, which are measures restricted to open sets.
The most straightforward approach to integration is the \emph{Choquet} or \emph{horizontal} integral, a lower integral introduced within the framework of domain theory  in~\cite{Tix1995}; see also~\cite{Konig1995,Lawson2004}.
The lower integral on valuations in the form used here was given in~\cite{Vickers2008}.
Relationships between the constructive and classical approaches were given in ~\cite{Edalat1995DTI}.
Explicit representations of valuations within the framework of type-two effectivity were given in~\cite{Schroder2007}, and representation of probability measures using probabilistic processes were given by~\cite{SchroderSimpson2006}.
In~\cite{Escardo2009}, a language EPCL for nondeterministic and probabilistic computation was given, based on the PCL language of~\cite{Escardo2004}.
%Category~\cite{Giry1982}.
In~\cite{HoyrupRojas2009IC}, a theory of measure was developed for the study of algorithmic randomness.

A constructive theory of measurable functions was also developed in~\cite{BishopCheng1972,BishopBridges1985}.
The theory is developed using abstract \emph{integration spaces}, and the integral is extended from test functions to integrable functions by taking limits.
However, the approach we use here, in which measurable functions are defined as limits of effectively-converging Cauchy sequences of continuous functions was introduced in~\cite{Spitters2003} and further developed in~\cite{Spitters2006,CoquandSpitters2009}.
Random variables over discrete domains were defined in~\cite{Mislove2007}, based on work of~\cite{Varacca2002}.
This was extended to random variables over continuous domains in~\cite{GoubaultLarrecqVaracca2011}, but the construction allows only for continuous random variables, and is overly-restrictive in practice.

To the best of our knowledge there has been relatively little work on constructive and computable approaches to stochastic processes.
An early constructive theory of discrete-time stochastic processes focusing on stopping times was given in~\cite{Chan1972}.
A fairly comprehensive theory though technically advanced theory based on stochastic relations is developed in~\cite{Doberkat2007}; the approach here is considerably simpler.
The monadic properties of the lower integral on valuations, were noted by~\cite{Vickers2011PP}, and of the completion construction by~\cite{OConnorSpitters2010}.
.%A constructive probability theory was developed in~\cite{Chan1974}, and

We use the framework of \emph{type-two effectivity (TTE)}, in which computations are performed by Turing machines working on infinite sequences, as a foundational theory of computability.
We believe that this framework is conceptually simpler for non-specialists than the alternative of using a domain-theoretic framework.
Since in TTE we work entirely in the class of quotients of countably-based (QCB) spaces, which form a cartesian closed category, many of the basic operations can be carried out using simple type-theoretic constructions such as the $\lambda$-calculus.

We assume that the reader has a basic familiarity with classical probability theory~(see e.g.~\cite{Shiryaev1995}) and stochastic processes~(see~\cite{Friedman1975,WatanabeIkeda1981,GikhmanSkorohod2004}).
Much of this article is concerned with giving computational meaning to classical concepts and arguments.
The main difficulty lies in the use of $\sigma$-algebras in classical probability, which have poor computability properties.
Instead, we use only topological constructions, which can usually be effectivised directly.
In particular, we define types of measurable functions as a completion of types of continuous functions.

%\cite{EvansLN,FannjiangLN,LalleyLN}

\section{Computable Analysis}

In the theory of type-two effectivity, computations are performed by Turing machines acting on \emph{sequences} over some alphabet $\Sigma$.
A computation performed by a machine $\meas$ is \emph{valid} on an input $p\in\Sigma^\omega$ if the computation does not halt, and writes infinitely many symbols to the output tape.
A type-two Turing machine therefore performs a computation of a partial function $\eta:\Sigma^\omega \pfto \Sigma^\omega$; we may also consider multi-tape machines computing $\eta:(\Sigma^\omega)^n \pfto (\Sigma^\omega)^m$.
It is straightforward to show that any machine-computable function $\Sigma^\omega \pfto \Sigma^\omega$ is continuous on its domain.

In order to relate Turing computation to functions on mathematical objects, we use \emph{representations} of the underlying sets, which are partial surjective functions $\delta:\Sigma^\omega \psfto \tpX$.
An operation $\tpX\fto \tpY$ is $(\delta_\tpX;\delta_\tpY)$-computable if there is a machine-computable function $\eta:\Sigma^\omega \pfto \Sigma^\omega$ with $\dom(\eta)\supset\dom(\delta_\tpX)$ such that $\delta_\tpY\circ \eta = f\circ\delta_\tpX$ on $\dom(\delta_\tpX)$.
Representations are equivalent if they induce the same computable functions.
If $\tpX$ is a topological space, we say that a representation $\delta$ of $\tpX$ is an \emph{admissible quotient representation} if (i) whenever $f:\tpX\fto \tpY$ is such that $f\circ\delta$ is continuous, then $f$ is continuous, and (ii) whenever $\phi:\Sigma^\omega\pfto \tpX$ is continuous, there exists continuous $\eta:\Sigma^\omega\pfto\Sigma^\omega$ such that $\phi=\delta\circ\eta$.
A computable type is a pair $(\tpX,[\delta])$ where $\tpX$ is a space and $[\delta]$ is an equivalence class of admissible quotient representations of $\tpX$.
A multivalued function $F:\tpX \mvfto \tpY$ is \emph{computably selectable} if there is a machine-computable function $\eta:\Sigma^\omega \pfto \Sigma^\omega$ with $\dom(\eta)\supset\dom(\delta_\tpX)$ such that $\delta_\tpY\circ \eta \in F\circ\delta_\tpX$ on $\dom(\delta_\tpX)$; note that different names of $x\in\tpX$ may give rise to different values of $y \in \tpY$.

The category of computable types with continuous functions is Cartesian closed, and the computable functions yield a Cartesian closed subcategory.
For any types $\tpX$, $\tpY$ there exist a canonical product type $\tpX\times\tpY$ with computable projections $\pi_{\tpX}:\tpX\times\tpY\fto\tpX$ and $\pi_{\tpY}:\tpX\times\tpY\fto\tpY$, and a canonical exponential type $\tpY^\tpX$ such that evaluation $\epsilon:\tpY^\tpX\times\tpX \rightarrow \tpY:(f,x)\mapsto f(x)$ is computable.
Since objects of the exponential type are continuous function from $\tpX$ to $\tpY$, we also denote $\tpY^\tpX$ by $\tpX\fto\tpY$ or $\tpCts(\tpX;\tpY)$; in particular, whenever we write $f:\tpX\fto\tpY$, we imply that $f$ is continuous.
There is a canonical equivalence between $(\tpX \times \tpY) \fto \tpZ$ and $\tpX \fto (\tpY \fto \tpZ)$ given by $\tilde{f}(x):\tpY\fto\tpZ:\tilde{f}(x)(y)=f(x,y)$.
%Further, there is a canonical equivalence between $\tpZ^{\tpX \times \tpY}$ and ${(\tpZ^\tpY)}^\tpX$ given by $f(x,y) = \tilde{f}(x)(y)$.

There are canonical types representing basic building blocks of mathematics, including the natural number type $\tpNat$ and the real number type $\tpRe$.
We use a three-valued logical type with elements $\{\false,\true,\indet\}$ representing \emph{false}, \emph{true}, and \emph{indeterminate} or \emph{unknowable}, and its subtypes the \emph{Boolean} type $\tpBl$ with elements $\{\false,\true\}$  and the \emph{Sierpinski} type $\tpSi$  with elements $\{\true,\indet\}$.
Given any type $\tpX$, we can identify the type $\tpOp(\tpX)$ of open subsets $U$ of $\tpX$ with $\tpX\fto\tpSi$ via the characteristic function $\chi_U$.
Further, standard operations on these types, such as arithmetic on real numbers, are computable.

A sequence $(x_n)$ is an \emph{effective Cauchy sequence} if $d(x_m,x_n) < \epsilon_{\max(m,n)}$ where $(\epsilon_n)_{n\in\N}$ is a known computable sequence with $\lim_{n\to\infty}\epsilon_n=0$, and a \emph{strong Cauchy sequency} if $\epsilon_n=2^{-n}$.
The limit of an effective Cauchy sequence of real number is computable.
%It is important to note that for \emph{any} uncountable type, equality $=$ is undecidable.

We shall also need the type $\tpHl \equiv \tpRe^{+,\infty}_<$ of positive real numbers with infinity under the lower topology.
The topology on the lower halfline $\tpHl$ is the toplogy of lower convergence, with open sets $(a,\infty]$ for $a\in\R^+$ and $\tpHl$ itself.
A representation of $\tpHl$ then encodes an increasing sequence of positive rationals with the desired limit. 
We note that the operators $+$ and $\times$ are computable on $\tpHl$, where we define $0\times\infty = \infty\times 0 = 0$, as is countable supremum $\sup:\tpHl^\omega \fto \tpHl$, $(x_0,x_1,x_2,\ldots)\mapsto\sup\{x_0,x_1,x_2,\ldots\}$.
Further, $\abs:\tpRe\fto\tpHl$ is computable, as is the embedding $\tpSi\hookrightarrow\tpHl$ taking $\true \mapsto 1$ and $\indet \mapsto 0$.
We let $\I_<$ be the unit interval $[0,1]$, again with the topology of lower convergence with open sets $(a,1]$ for $a\in[0,1)$ and $\I$ itself, and $\I_>$ the interval with the topology of upper convergence.

A \emph{computable metric space} is a pair $(\tpX,d)$ where $\tpX$ is a computable type, and $d:\tpX\times \tpX\fto \R^+$ is a computable metric, such that the extension of $d$ to $\tpX\times\clset(\tpX)$ defined by $d(x,A)=\inf\{d(x,y) \mid y\in A\}$ is computable as a function into $\R^{+,\infty}_<$.
This implies that given an open set $U$ we can compute $\epsilon>0$ such that $B_{\epsilon}(x) \subset U$, which captures the relationship between the metric and the open sets.
The effective metric spaces of~\cite{Weihrauch1999} are a concrete class of computable metric space.

A type $\tpX$ is \emph{effectively separable} if there is a computable function $\xi:\N\fto\tpX$ such that $\rng(\xi)$ is dense in $\tpX$.

Throughout this paper we shall use the term ``compute'' to indicate that a formula or procedure can be effectively carried out in the framework of type-two effectivity.
Other definitions and equations may not be possible to verify constructively, but hold from axiomatic considerations.

\section{Computable Measure Theory}

The main difficulty with classical measure theory is that Borel sets and Borel measures have very poor computability properties.
Although a computable theory of Borel sets was given in~\cite{Brattka2005}, the measure of a Borel set is in general not computable in $\R$.
However, we can consider an approach to measure theory in which we may only compute the measure of \emph{open} sets.
Since open sets are precisely those which can be approximated from inside, we expect to be able to compute lower bounds for the measure of an open set, but not upper bounds.
The above considerations suggest an approach which has become standard in computable measure theory, namely that using \emph{valuations}~\cite{JonesPlotkin1989,Edalat1995DTI,SchroderSimpson2006,Escardo2009}.
\begin{definition}[Valuation]
\label{defn:valuation}
The type of \emph{(continuous) valuations} on $\tpX$ is the subtype of continuous functions $\nu: \tpOp(\tpX) \fto \tpHl$ satisfying $\nu(\emptyset)=0$ and the \emph{modularity} condition $\nu(U)+\nu(V) = \nu(U\cup V) + \nu(U\cap V)$ for all $U,V \in \opset(\tpX)$.
\end{definition}
A valuation $\nu$ on $\tpX$ is \emph{finite} if $\nu(\tpX)$ is finite, \emph{effectively finite} if $\nu(\tpX)$ is a computable real number, and \emph{locally finite} if $\nu(U)<\infty$ for any $U$ with compact closure.
An effectively finite valuation computably induces an upper-valuation on closed sets $\bar{\nu}:\clset(\tpX)\fto \tpRe^{+}_{>}$ by $\bar{\nu}(A) = \nu(\tpX) - \nu(\tpX\setminus A)$.
The following proposition gives standard monotonicity and convergence properties for type of valuations.
\begin{proposition}
Let $\nu:\tpOp(\tpX)\fto\tpHl$ be continuous. Then $\nu$ satisfies the \emph{monotonicity} condition $\nu(U)\leq \nu(V)$ whenever $U\subset V$, and the \emph{continuity} condition $\nu\bigl(\bigcup_{n=0}^{\infty}U_n\bigr) = \lim_{n\to\infty} \nu(U_n)$ whenever $U_n$ is an increasing sequence of open sets.
\end{proposition}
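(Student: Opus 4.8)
The plan is to obtain both conclusions from topological continuity of $\nu$ alone, without invoking modularity or even $\nu(\emptyset)=0$, by exploiting the specialization orders of the two non-Hausdorff types $\tpOp(\tpX)$ and $\tpHl$. I would start from the general fact that any continuous map $f\colon S\fto T$ preserves specialization order: writing $x\sqsubseteq y$ to mean that every open set containing $x$ also contains $y$, if $x\sqsubseteq y$ and $f(x)\in V$ with $V$ open, then $x\in f^{-1}(V)$, hence $y\in f^{-1}(V)$ and $f(y)\in V$, so $f(x)\sqsubseteq f(y)$. It then remains to identify these orders. On $\tpHl$ the open sets are exactly the up-sets $(a,\infty]$, so $\sqsubseteq$ is the usual numerical order $\leq$. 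Identifying $\tpOp(\tpX)$ with $\tpX\fto\tpSi$ via the characteristic functions $\chi_U$, the order is pointwise, and since $\indet\sqsubseteq\true$ in $\tpSi$ one checks that $U\sqsubseteq V$ holds exactly when $U\subseteq V$. Monotonicity of $\nu$ is then precisely the statement that $\nu$ preserves specialization order.

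For the continuity condition, let $(U_n)$ be an increasing sequence of open sets with union $U=\bigcup_{n=0}^{\infty}U_n$. Monotonicity gives $\nu(U_n)\leq\nu(U)$ for every $n$, so $(\nu(U_n))$ is an increasing sequence in $\tpHl$ with $\sup_n\nu(U_n)\leq\nu(U)$; the content is the reverse inequality.

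The key step is that in $\tpOp(\tpX)$ an increasing sequence of opens converges topologically to its union. This rests on the fact that $\tpOp(\tpX)\cong\tpSi^{\tpX}$ carries (the sequentialisation of) the Scott topology on the lattice of open sets, whose neighbourhoods of a point $W$ are inaccessible by directed unions; consequently any open neighbourhood of $U$ must already contain some $U_n$, and hence, being upward closed, all later terms, so $U_n\to U$. Since every space here is a QCB space and therefore sequential, $\nu$ is sequentially continuous, whence $\nu(U_n)\to\nu(U)$ in $\tpHl$. I would then unwind convergence in the lower topology: $y_n\to y$ means that for each $a<y$ one has $y_n>a$ eventually, i.e. $\liminf_n y_n\geq y$; for the increasing sequence $(\nu(U_n))$ this reads $\sup_n\nu(U_n)\geq\nu(U)$. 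Combining the two inequalities gives $\nu(U)=\sup_n\nu(U_n)=\lim_{n\to\infty}\nu(U_n)$, the final equality because suprema of increasing sequences are limits in $\tpHl$.

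The main obstacle I anticipate is this geometric step: making precise that increasing unions of open sets are genuine topological limits in $\tpOp(\tpX)$, that is, that the exponential topology on $\tpSi^{\tpX}$ agrees with the Scott topology of the open-set lattice closely enough that increasing sequences converge to their joins (here one uses that a space and its sequential coreflection have the same convergent sequences). Once that is in place, the order-theoretic reasoning—preservation of specialization order and the interpretation of limits in $\tpHl$—is routine.
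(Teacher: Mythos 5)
Your proof is correct, and it supplies exactly the content that the paper compresses into a single sentence (``the proof is immediate from properties of continuous functions into $\tpHl$''): monotonicity because continuous maps preserve the specialization preorder, which you correctly identify as inclusion on $\tpOp(\tpX)\cong\tpSi^\tpX$ and as the numerical order $\leq$ on $\tpHl$; continuity because increasing unions are topological limits in $\tpOp(\tpX)$, while convergence in the lower topology only forces $\liminf_n y_n\geq y$, which together with monotonicity upgrades to $\nu(U)=\sup_n\nu(U_n)=\lim_n\nu(U_n)$. You also correctly use nothing beyond continuity of $\nu$, matching the proposition's hypotheses, which do not assume modularity or $\nu(\emptyset)=0$. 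The one step you rightly flag as delicate --- that the exponential topology on $\tpSi^\tpX$ is the sequentialization of the Scott topology on the open-set lattice --- does hold in the QCB setting, but you can bypass it entirely and make the argument self-contained: in the sequential/QCB category a sequence converges in an exponential if and only if it converges \emph{continuously}, and for $U_n\nearrow U$ continuous convergence of $\chi_{U_n}$ to $\chi_U$ is a two-line check: given $x_n\to x$ with $x\in U$, choose $m$ with $x\in U_m$; since $U_m$ is open, $x_n\in U_m$ for all sufficiently large $n$, and $U_m\subseteq U_n$ once $n\geq m$, so $\chi_{U_n}(x_n)=\true$ eventually, as required. This substitution removes the only nontrivial external fact from your proof; the remaining pieces (the evaluation-map argument pinning down the specialization order on $\tpSi^\tpX$, the upward closure of sequentially open sets under the original order --- which justifies your ``one checks'' that $U\subseteq V$ implies $U\sqsubseteq V$ --- and the $\liminf$ reading of lower convergence) are accurate as written.
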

\noindent
The proof is immediate from properties of continuous functions into $\tpHl$.
An immediate consequence is that $\nu(U)\leq \bar{\nu}(A)$ whenever $U\subset A$.

An explicit representation of valuations $\meas[0,1]$ on the unit interval was given in~\cite{Weihrauch1999} using the basic open sets $I_{a,b,r} = \{ \nu \in \meas[0,1] \mid \nu(a,b)>r \}$ for $a,b,r\in\Q$ with $0 \leq a<b \leq 1$ and $r>0$. Various representations for arbitrary spaces were given in~\cite{Schroder2007}.

The following theorem~\cite[Corollary~5.3]{Edalat1995DSM} shows that valuations and measures are equivalent on locally-compact Hausdorff spaces.
\begin{theorem}\label{thm:valuationborelmeasure}
%On a second countable locally compact Hausdorff space, finite measures and continuous valuations are in one to one correspondance.
On a countably-based locally-compact Hausdorff space, finite Borel measures and continuous valuations are in one-to-one correspondance.
\end{theorem}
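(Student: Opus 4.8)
The plan is to exhibit the correspondence as a pair of mutually inverse maps: \emph{restriction}, sending a finite Borel measure to its values on open sets, and \emph{extension}, sending a continuous valuation back to a Borel measure. The restriction direction is routine. Given a finite Borel measure $\mu$, set $\nu=\mu|_{\tpOp(\tpX)}$: then $\nu(\emptyset)=0$ is immediate, modularity is finite additivity written as inclusion--exclusion, and upward continuity on increasing open sequences is continuity of $\mu$ from below. Since continuity of $\nu:\tpOp(\tpX)\fto\tpHl$ into the lower halfline is precisely Scott-continuity, these properties make $\nu$ a continuous valuation in the sense of Definition~\ref{defn:valuation}. Injectivity of restriction is where regularity enters: a second-countable locally-compact Hausdorff space is metrizable and $\sigma$-compact, so every finite Borel measure on it is outer regular, and two measures agreeing on all open sets must agree on every Borel set.

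The substance of the theorem is the extension direction. Given a continuous valuation $\nu$, I would build the outer measure
\[
  \mu^*(A) \;=\; \inf\{\,\nu(U) \mid A\subseteq U,\ U\in\tpOp(\tpX)\,\}
\]
and verify the Carathéodory axioms. Monotonicity and $\mu^*(\emptyset)=0$ are clear, and $\mu^*(U)=\nu(U)$ for open $U$ follows at once from monotonicity of $\nu$ (any open $V\supseteq U$ has $\nu(V)\ge\nu(U)$). Countable subadditivity of $\mu^*$ reduces, through the infimum, to countable subadditivity of $\nu$ on open sets, which follows from modularity (giving $\nu(U\cup V)\le\nu(U)+\nu(V)$ because $\nu(U\cap V)\ge 0$) together with upward continuity. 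The candidate Borel measure $\mu$ is then $\mu^*$ restricted to the Carathéodory-measurable sets, and it remains to show every open set is measurable.

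This last step is where local compactness and second countability are essential, and I expect it to be the main obstacle. To verify $\mu^*(E)\ge\mu^*(E\cap U)+\mu^*(E\setminus U)$ for open $U$, it suffices by monotonicity and the definition of $\mu^*$ to take $E=W$ open. Since $W\cap U$ is open, I can exhaust it by relatively compact open sets $G_n\uparrow W\cap U$ and, given $\epsilon>0$, choose $n$ with $\nu(G_n)>\nu(W\cap U)-\epsilon$ and $K=\overline{G_n}\subseteq W\cap U$ compact. Then $V=W\setminus K$ is open, contains $W\setminus U$, and is disjoint from $G_n$; modularity gives $\nu(G_n)+\nu(V)=\nu(G_n\cup V)\le\nu(W)$. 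Hence $\mu^*(W\cap U)+\mu^*(W\setminus U)\le\nu(W\cap U)+\nu(V)<\nu(W)+\epsilon$, and letting $\epsilon\to 0$ closes the splitting inequality. The compact approximation from inside, available only because the space is locally compact and second countable, is exactly what lets modularity do its work here.

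Measurability of open sets yields measurability of all Borel sets, so $\mu$ is a finite Borel measure with $\mu(U)=\nu(U)$; restriction then recovers $\nu$, and the regularity argument shows that extension recovers $\mu$, so the two maps are mutually inverse. An alternative route, closer to the integration theory developed later in the paper, would pass through the Choquet integral $f\mapsto\int_0^\infty\nu(\{f>t\})\,dt$, using modularity of $\nu$ to prove that this functional is linear on continuous functions and then invoking the Riesz representation theorem; in that approach the modularity-to-linearity step replaces the compact-exhaustion argument as the crux.
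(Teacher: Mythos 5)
Your argument is correct, but note that there is no proof in the paper to compare against: Theorem~\ref{thm:valuationborelmeasure} is imported wholesale from Edalat (cited as Corollary~5.3 of \cite{Edalat1995DSM}), where it is obtained by domain-theoretic means --- the space is embedded in its upper space of compact subsets, and the correspondence follows from extension results for continuous valuations on $\omega$-continuous dcpos, in the same circle of ideas as the Alvarez-Manilla-type theorem the paper quotes immediately afterwards. What you have written is the classical substitute: a Carath\'eodory outer-measure extension in which the splitting inequality for an open $U$ is checked on open test sets via the inner compact exhaustion $G_n \uparrow W\cap U$, $K=\overline{G_n}$, $V=W\setminus K$, with modularity on the disjoint pair $(G_n,V)$ standing in for additivity; this is sound (I verified the reduction to open $E$, the subadditivity of $\mu^*$ from modularity plus $\omega$-continuity, and the disjointness step $\nu(G_n)+\nu(V)=\nu(G_n\cup V)\leq\nu(W)$), and its virtue over the cited route is that it is self-contained and makes visible exactly where each hypothesis acts: local compactness plus second countability give the compact inner approximation of open sets, which is the only place modularity is leveraged into full additivity. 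Two small trims would tighten it. First, injectivity of restriction follows already from the $\pi$--$\lambda$ theorem, since the open sets form a $\pi$-system containing $\tpX$ and the measures are finite; this makes the metrizability and outer-regularity detour unnecessary --- surjectivity is exactly your extension construction, and the bijection follows, with outer regularity of the extension then coming for free from the definition of $\mu^*$. Second, for the correspondence to land in \emph{finite} Borel measures you must restrict the valuation side to finite valuations ($\nu(\tpX)<\infty$), which your final sentence uses tacitly; as stated in the paper the theorem elides this. Your closing alternative via the Choquet integral and Riesz representation would also work, but as the discussion around Theorem~\ref{thm:effectiveriesz} indicates, the Riesz route demands extra regularity hypotheses on the space, so the direct Carath\'eodory extension is the better match for the stated generality.
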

In~\cite{AlvarezManilla2002}, it was shown that any continuous valuation on a locally compact sober space extends to a unique Borel measure.
This result provides a link with classical measure theory, but are not needed for a purely constructive approach; valuations themselves are the objects of study, and we only (directly) consider the measure of open and closed sets.

The following result shows that the measure of a sequence of small sets approaches zero.
Recall that a space $\tpX$ is \emph{regular} if for any point $x$ and open set $U$, there exists an open set $V$ and a closed set $A$ such that $x\in V\subset A\subset  U$.
\begin{lemma}\label{lem:decreasingopen}
Let $\tpX$ be a separable regular space, and $\nu$ a finite valuation on $\tpX$.
If $U_n$ is any sequence of open sets such that $U_{n+1}\subset U_n$ and $\bigcap_{n=0}^{\infty} U_n=\emptyset$, then $\nu(U_n)\to0$ as $n\to\infty$.
\end{lemma}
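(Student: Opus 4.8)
The plan is to fix an arbitrary $\epsilon > 0$ and show $\limsup_{n} \nu(U_n) \le \epsilon$; since $\nu(U_n)$ is nonincreasing by the monotonicity part of the Proposition, this forces $\nu(U_n) \to 0$. The whole difficulty is that the type of valuations only supplies continuity from \emph{below}, whereas the statement is a continuity-from-above assertion. The strategy is therefore to trade the decreasing open sets for \emph{increasing} open complements of closed sets, on which the Proposition can be applied.

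First I would establish an inner-regularity step: for each open $U$ and each $\delta>0$ there are an open $V$ and a closed $A$ with $V \subset A \subset U$ and $\nu(U) - \nu(V) < \delta$. Using regularity, every $x \in U$ has an open neighbourhood $W_x$ and a closed set $A_x$ with $x \in W_x \subset A_x \subset U$; separability then lets me replace the cover $\{W_x\}$ by a countable subcover $\{W_{x_i}\}$, whose finite unions increase to $U$, so continuity from below gives $\nu\bigl(\bigcup_{i\le m} W_{x_i}\bigr) \to \nu(U)$. Choosing $m$ large and setting $V = \bigcup_{i\le m} W_{x_i}$ and $A = \bigcup_{i \le m} A_{x_i}$ (a finite, hence closed, union) yields the claim. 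This extraction of a countable subcover from separability together with regularity is the step I expect to be the main obstacle, and it is essentially the only place where the topological hypotheses are used.

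Applying this to each $U_n$ with $\delta = \epsilon\, 2^{-n-1}$ gives open $V_n$ and closed $A_n$ with $V_n \subset A_n \subset U_n$. I would then pass to the decreasing families $G_n = \bigcap_{k=0}^n V_k$ (open) and $B_n = \bigcap_{k=0}^n A_k$ (closed), which satisfy $G_n \subset B_n \subset U_n$ and, since $B_n \subset A_n \subset U_n$, also $\bigcap_n B_n = \emptyset$. A telescoping estimate controls the accumulated loss: writing $H_j = U_n \cap V_0 \cap \dots \cap V_j$ and applying modularity to each step in the form $\nu(H_{j-1}) - \nu(H_{j-1}\cap V_j) = \nu(H_{j-1}\cup V_j) - \nu(V_j)$, together with monotonicity (note $H_{j-1}\cup V_j \subset U_j$), bounds each term by $\nu(U_j) - \nu(V_j)$, so that $\nu(U_n) - \nu(G_n) \le \sum_{j=0}^n \bigl(\nu(U_j)-\nu(V_j)\bigr) < \epsilon$.

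Finally I would exploit the complement trick. Since $B_n \downarrow \emptyset$ with $B_n$ closed, the open sets $\tpX \setminus B_n$ increase to $\tpX$, so continuity from below gives $\nu(\tpX\setminus B_n) \to \nu(\tpX)$ and hence $\bar{\nu}(B_n) = \nu(\tpX) - \nu(\tpX \setminus B_n) \to 0$ (finiteness of $\nu$ is exactly what makes $\bar{\nu}$ well defined here). Combining this with the sandwich $G_n \subset B_n$ and the remark that $\nu(G_n) \le \bar{\nu}(B_n)$, I obtain $\nu(U_n) < \nu(G_n) + \epsilon \le \bar{\nu}(B_n) + \epsilon$, whence $\limsup_n \nu(U_n) \le \epsilon$. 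As $\epsilon>0$ was arbitrary, $\nu(U_n)\to 0$. Notably this argument never needs compactness or completeness: the continuity-from-above behaviour is recovered purely from continuity from below applied to the rising complements.
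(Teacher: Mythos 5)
Your proof is correct and follows essentially the same route as the paper's: inner-approximate each $U_n$ by $V_n\subset A_n\subset U_n$ with geometrically summable losses, intersect the closed sets, and recover upper-continuity on the decreasing closed sets $B_n$ from continuity from below applied to the increasing open complements (the paper phrases this step as ``$\nu$ is upper-continuous on closed sets'' inside a proof by contradiction, whereas you argue directly via $\limsup_n\nu(U_n)\le\epsilon$, and your modularity telescoping merely makes explicit the paper's inequality $\bar{\nu}\bigl(\bigcap_{n=0}^{N}A_n\bigr)\ge\nu(U_N)-\sum_{n=0}^{N}\delta_n$). Note that your countable-subcover extraction needs a Lindel\"of-type property rather than bare separability, but this is exactly the same implicit assumption the paper makes in asserting $U_n=\bigcup_k V_{n,k}$, and it is harmless in the intended setting of countably-based spaces.
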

\begin{proof}
Since $\tpX$ is separable and regular, there exist open sets $V_{n,k}$ and closed sets $A_{n,k}$ such that $V_{n,k}\subset A_{n,k}\subset V_{n,k+1} \subset U_n$ and $\bigcup_{k\to\infty} V_{n,k}=U_n$.
Then $\lim_{k\to\infty} \nu(V_{n,k})=U_n$.
Suppose $\inf_{n\in\N}\nu(U_n)>\epsilon>0$.
Choose a sequence $\delta_n$ such that $\sum_{n=0}^{\infty}\delta_n = \delta<\epsilon$, and sets $V_n\subset A_n\subset U_n$ such that $\nu(V_n)\geq \nu(U_n)-\delta_n$.
Then $\bigcap_{n=0}^{N} A_n = U_N \setminus \bigcup_{n=0}^{N} (U_n\setminus A_n)$, so $\nu(\bigcap_{n=0}^{N} A_n) \geq \nu(U_N) - \sum_{n=0}^{N} \nu (U_n\setminus A_n) \geq \nu(U_N) - \sum_{n=0}^{N} (\nu (U_n) - \nu(A_n)) \geq \nu(U_N) - \sum_{n=0}^{N}\delta_n \geq \epsilon - \delta > 0$.
Since $\nu$ is upper-continuous on closed sets, $\nu(\bigcap_{n=0}^{\infty} A_n) \geq \epsilon - \delta > 0$.
Then $\bigcap_{n=0}^{\infty} A_n \neq \emptyset$, contradicting $\emptyset = \bigcap_{n=0}^{\infty} U_n \supset \bigcap_{n=0}^{\infty} A_n$.
\end{proof}

\begin{definition}
Given a sub-topology $\calV$ on $\tpX$ and a valuation $\nu$ on $\tpX$, a \emph{conditional valuation} is a function $\nu(\cdot|\cdot):\tpOp(\tpX)\times\calV\fto\tpHl$ such that $\nu(U\cap V)=\nu(U|V)\nu(V)$ for all $U\in\tpOp(\tpX)$ and $V\in\calV$.
\end{definition}
\noindent
Clearly, $\nu$ can be computed given $\nu$ restricted to $\calV$ and $\nu(\cdot|\cdot)$. 
The conditional valuation $\nu(\cdot|V)$ is uniquely defined if $\nu(V)\neq0$.
However, since $\nu(U\cap V):\R^+_<$ but $1/\nu(V):\R^{+,\infty}_>$, the conditional valuation $\nu(\cdot|V)$ cannot be \emph{computed} unless we are also given a set $A\in\tpCl(\tpX)$ such that $V\subset A$ and $\bar{\nu}(A\setminus V)=0$, in which case we have $\nu(U|V)=\nu(U\cap V)/\bar{\nu}(A)$.
We define the \emph{$\nu$-regular sets} as those for which $\nu(\partial V)=0$, so that $\nu(U|V)$ is continuous for open $U$ and non-null $\nu$-regular $V$.

Just as for classical probability, we say (open) sets $U_1,U_2$ are \emph{independent} if $\nu(U_1\cap U_2)=\nu(U_1)\nu(U_2)$.

We can define a notion of integration for positive lower-semicontinuous functions by the \emph{Choquet} or \emph{horizontal} integral; see~\cite{Tix1995,Lawson2004,Vickers2008}.
\begin{definition}[Lower horizontal integral]
Given a valuation $\nu:(\tpX\fto\tpSi)\fto\tpHl$, define the lower integral $(\tpX\fto\tpHl)\fto\tpHl$ by
\begin{equation} \textstyle \int_{\tpX}\!\psi\,d\nu = \sup \bigl\{ {\textstyle\sum_{m=1}^{n}} (p_{m}-p_{m-1}) \, \nu(\psi^{-1}(p_{m},\infty]) \mid (p_0,\ldots,p_n) \in \Q^* \text{ and } 0 = p_0 < p_1 < \cdots < p_n \bigr\} . \label{eq:lowerintegral} \end{equation}
\end{definition}
Note that we could use any dense set of computable positive real numbers, such as the dyadic rationals $\Q_2$, instead of the rationals in~\eqref{eq:lowerintegral}.
Since each sum is computable, and the supremum of countably many elements of $\tpHl$ is computable, we immediately obtain:
\begin{proposition}
Given names of a valuation $\nu$ in $(\tpX\fto\tpSi)\fto\tpHl$ and of a function $\psi$ in $\tpX\fto\tpHl$, the lower integral $\int_\tpX \psi\,d\nu$ is computable in $\tpHl$.
\end{proposition}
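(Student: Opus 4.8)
The plan is to read off from the definition that $\int_\tpX \psi\,d\nu$ is a countable supremum of finite sums, and then to exhibit these sums as a single computable element of $\tpHl^\omega$; the result then follows from the computability of countable suprema $\sup:\tpHl^\omega\fto\tpHl$ recorded earlier. Concretely, I would fix a computable enumeration $(\seq{p}^{(k)})_{k\in\N}$ of the set of finite strictly-increasing rational tuples $(p_0,\ldots,p_n)$ with $p_0=0$, and set $s_k = \sum_{m=1}^{n}(p_m^{(k)}-p_{m-1}^{(k)})\,\nu(\psi^{-1}(p_m^{(k)},\infty])$, aiming to present $k\mapsto s_k$ as a computable map $\N\fto\tpHl$.

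The essential computable ingredient is the strict preimage. Identifying $\tpOp(\tpX)$ with $\tpX\fto\tpSi$ via characteristic functions, I would first observe that the threshold test $\tpHl\times\Q\fto\tpSi$ sending $(t,p)$ to $\true$ exactly when $t>p$ is computable: a name of $t\in\tpHl$ is an increasing sequence of rationals converging up to $t$, so the condition $t>p$ is finitely observable---emit $\true$ as soon as some term of the sequence exceeds $p$---which is precisely the data of a Sierpinski name. Precomposing this test with $\psi$ yields the characteristic function $\chi_{\psi^{-1}(p,\infty]}$, so the assignment $(\psi,p)\mapsto \psi^{-1}(p,\infty]\in\tpOp(\tpX)$ is computable uniformly in $p\in\Q$. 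Applying the (computable) evaluation of the valuation $\nu$ to this open set produces $\nu(\psi^{-1}(p,\infty])\in\tpHl$, again uniformly in $p$.

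It then remains only to assemble the sum. Each coefficient $p_m-p_{m-1}$ is a nonnegative rational, so the products $(p_m-p_{m-1})\,\nu(\psi^{-1}(p_m,\infty])$ are computable using multiplication on $\tpHl$, and the finite sums are computable using addition on $\tpHl$; both operations were noted to be computable. Since the enumeration of tuples is computable and each $s_k$ is built from it by these operations, $k\mapsto s_k$ is a computable element of $\tpHl^\omega$, and taking $\sup_k s_k$ delivers the integral.

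The one place requiring genuine care---and what I expect to be the main obstacle---is the first step: verifying that the strict preimage really is computable into $\tpOp(\tpX)$. The point is that $(p,\infty]$ is open in the \emph{lower} topology on $\tpHl$, so membership $\psi(x)>p$ is positively semidecidable from a name of $\psi(x)$ while its negation is not; this is exactly what makes the Sierpinski-valued test well typed, and why one must work with the strict inequality and the open threshold set rather than a closed one. Once this is granted, the remaining steps are routine compositions in the cartesian closed structure.
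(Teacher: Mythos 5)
Your proposal is correct and follows essentially the same route as the paper, which treats the result as immediate from the definition: each finite sum is computable (rational coefficients, $\nu$ applied to the computable open strict preimage $\psi^{-1}(p,\infty]$, using the computability of $+$ and $\times$ on $\tpHl$), and the countable supremum $\sup:\tpHl^\omega\fto\tpHl$ is computable. The only difference is that you make explicit the one step the paper leaves implicit---that the Sierpinski-valued threshold test $(t,p)\mapsto[t>p]$ is computable, so that $(\psi,p)\mapsto\chi_{\psi^{-1}(p,\infty]}$ is well typed into $\tpX\fto\tpSi$---and your justification of that step is sound.
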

Note that although an alternative form for the sum is given through the equality
\[ {\textstyle\sum_{m=1}^{n}} (p_{m}-p_{m-1}) \, \nu(\phi^{-1}(p_{m},\infty]) = {\textstyle\sum_{m=1}^{n}} p_{m} \, \nu(\phi^{-1}(p_{m},p_{m+1}])  \]
where $p_{n+1}=\infty$,
the lower integral cannot be computed in this form since
\(  \nu(\phi^{-1}(p_{m},p_{m+1}]) = \nu(\phi^{-1}(p_{m},\infty]) - \nu(\phi^{-1}(p_{m+1},\infty]) \)
is uncomputable in $\H$.

It is fairly straightforward to show that the integral is linear,
\begin{equation} \textstyle \int_\tpX (a_1\psi_1+a_2\psi_2)\,d\nu= a_1\int_\tpX\psi_1\,d\nu+a_2\int_\tpX\psi_2\,d\nu  \end{equation}
for all $a_1,a_2\in \Hl$ and $\psi_1,\psi_2:\Hl\fto\Hl$.

If $\chif_U$ is the characteristic function of a set $U$, then
\( \textstyle \int_\tpX \chif_U \,d\nu= \nu(U) ,  \)
and it follows that if $\phi=\sum_{i=1}^{n} a_i\,\chif_{U_i}$ is a step function, then
%\[ \textstyle \raisebox{-0.3ex}{\Large$\int$}_{\!\!\!\tpX} \sum_{i=1}^{n} a_i\,\chif_{U_i}\,d\nu = \sum_{i=1}^{n} a_i\,\nu(U_i) .\]
\( \textstyle \int_\tpX\phi\,d\nu = \sum_{i=1}^{n} a_i\,\nu(U_i) .\)

Given a (lower-semi)continuous linear functional $\mu:(\tpX\fto\tpHl)\fto\tpHl$, we can define a function $\tpOp(\tpX)\fto\tpHl$ by
\( U \mapsto \mu(\chif_U) \) for $U\in\tpOp(\tpX)$.
By linearity, \[ \textstyle \mu(\chif_U)+\mu(\chif_V) = \mu(\chif_{U\cap V})+\mu(\chif_{U\cup V}) .  \]
Hence $\mu$ induces a valuation on $\tpX$.
We therefore obtain a computable equivalence between the type of valuations and the type of positive linear lower-semicontinuous functionals:
\begin{theorem}
\label{thm:lowermeasurevaluation}
The type of valuations $(\tpX\fto\tpSi)\fto\tpHl$ is computably equivalent to the type of continuous linear functionals $(\tpX\fto\tpHl)\fto\tpHl$.
\end{theorem}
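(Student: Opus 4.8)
The plan is to exhibit two mutually inverse computable maps between the two types. In one direction I send a valuation $\nu$ to the functional $\mu_\nu:\psi\mapsto\int_\tpX\psi\,d\nu$; the preceding proposition already shows this is computable jointly in $\nu$ and $\psi$, linearity was noted above, and $\mu_\nu$ is continuous because the integral is a supremum of expressions each continuous in $\nu$, so $\mu_\nu$ lands in the type of continuous linear functionals. In the other direction I send a linear functional $\mu$ to $\nu_\mu:U\mapsto\mu(\chif_U)$; this is computable because $U\mapsto\chif_U$ is a computable map $\tpOp(\tpX)\fto(\tpX\fto\tpHl)$ and evaluation is computable, while the discussion preceding the statement shows $\nu_\mu$ satisfies $\nu_\mu(\emptyset)=0$ and modularity, hence is a valuation.

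First I would dispose of the easy round-trip. Starting from a valuation $\nu$, forming $\mu_\nu$ and then $\nu_{\mu_\nu}$ gives $\nu_{\mu_\nu}(U)=\int_\tpX\chif_U\,d\nu=\nu(U)$ by the identity $\int_\tpX\chif_U\,d\nu=\nu(U)$ recorded above; hence $\nu_{\mu_\nu}=\nu$.

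The substantive direction is the other round-trip, where I must show $\int_\tpX\psi\,d\nu_\mu=\mu(\psi)$ for every linear continuous $\mu$ and every $\psi:\tpX\fto\tpHl$. I would first check agreement on step functions: if $\phi=\sum_{i=1}^n a_i\chif_{U_i}$ then linearity gives $\mu(\phi)=\sum_i a_i\,\mu(\chif_{U_i})=\sum_i a_i\,\nu_\mu(U_i)=\int_\tpX\phi\,d\nu_\mu$, the last equality being the step-function formula noted above. It then remains to pass to arbitrary $\psi$. For a partition $0=p_0<\cdots<p_n$ the horizontal slab $\phi=\sum_{m=1}^n(p_m-p_{m-1})\,\chif_{\psi^{-1}(p_m,\infty]}$ is a step function below $\psi$, taking at a point with $\psi$-value $t$ the value $p_k$ where $p_k<t\le p_{k+1}$; as the partition is refined and extended these slabs increase pointwise to $\psi$, and by definition $\int_\tpX\psi\,d\nu_\mu$ is exactly the supremum of $\int_\tpX\phi\,d\nu_\mu=\mu(\phi)$ over such slabs.

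Finally I would invoke continuity of $\mu$. Since $\mu$ is continuous into $\tpHl$ it is monotone for the specialization (pointwise) order and preserves suprema of increasing families, so $\sup_\phi\mu(\phi)=\mu(\sup_\phi\phi)=\mu(\psi)$, whence $\int_\tpX\psi\,d\nu_\mu=\mu(\psi)$ and $\mu_{\nu_\mu}=\mu$. The main obstacle is precisely this last approximation step: I must verify that the slabs appearing in the integral genuinely exhaust $\psi$ as a directed supremum in the function space $\tpX\fto\tpHl$, and that continuity of $\mu$ delivers $\mu(\sup_\phi\phi)=\sup_\phi\mu(\phi)$ in the lower-topology setting, where subtraction is unavailable and monotonicity must be read off from continuity rather than from positivity of differences. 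The two maps being mutually inverse and each computable then yields the claimed computable equivalence of types.
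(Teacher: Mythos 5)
Your proposal is correct and takes essentially the paper's approach: the paper likewise passes from $\nu$ to the lower integral $\psi\mapsto\int_\tpX\psi\,d\nu$ (computable and linear by the preceding propositions) and from $\mu$ to $U\mapsto\mu(\chif_U)$ (a valuation by linearity), leaving the mutual-inverse verification implicit, and you supply exactly the standard missing step --- horizontal-slab approximation of $\psi$ from below together with preservation of increasing suprema by continuous maps into $\tpHl$, just as in the cited Choquet-integral literature. One cosmetic point: continuity of $\mu_\nu$ as an element of $(\tpX\fto\tpHl)\fto\tpHl$ follows from joint computability of the integral by currying (i.e.\ continuity in $\psi$), not from continuity in $\nu$ as your parenthetical suggests.
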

%\begin{proof}
%Given a valuation $\nu:(\tpX\fto\tpSi)\fto\tpHl$, define linear $\mu:(\tpX\fto\tpHl)\fto\tpHl$ by $\mu(\psi)=\int_\tpX\psi(x)\,d\nu(x)$.
%Conversely, given $\mu$, define $\nu$ by $\nu(U) = \mu(\chif_U)$.
%\end{proof}
Types of the form $(\tpX\fto \tpT)\fto \tpT$ for a fixed type $\tpT$ form a \emph{monad}~\cite{Street1972} over $\tpX$, and are particularly easy to work with.

In~\cite[Section~4]{Edalat1995DTI}, a notion of integral $\ctsfn_\mathrm{bd}(\tpX;\R)\fto\R$ on continuous bounded functions was introduced based on the approximation by measures supported on finite sets of points.
Our lower integral on positive lower-semicontinuous functions can be extended to bounded functions as follows:
\begin{definition}[Bounded integration]
A valuation $\mu$ on $\tpX$ is \emph{effectively finite} if there is a (known) computable real $c\in\tpRe$ such that $\mu(\tpX)=c$.

An upper-semicontinuous function $f:\tpX\fto\tpRe_>$ is \emph{effectively bounded} if there is a (known) computable real $b\in\tpRe$ such that $f(x) < b$ for all $x\in\tpX$.

If $\mu$ is effectively finite and $f$ is effectively bounded, then the function $b-f:\tpX\fto\tpRe^+_<$ is computable (given names of $b$ and $f$), and we define the integral $\tpCts_{\mathrm{bd}}(\tpX;\tpRe_>) \fto \tpRe_>$ by
\[ \textstyle \int_\tpX f(x) \, d\mu(x) = b\,c - \int_\tpX \bigl(b-f(x)\bigr) \,d\mu(x) . \]
Similarly, if $f:\tpX\fto\tpRe_<$ has a computable lower bound $a$, we define the integral $\tpCts_{\mathrm{bd}}(\tpX;\tpRe_<) \fto \tpRe_<$ by
\[ \textstyle \int_\tpX f(x) \, d\mu(x) = \int_\tpX \bigl(a+f(x)\bigr) \,d\mu(x)  -  a\,c. \]

A continuous function $f:\tpX\fto\tpRe$ is \emph{effectively bounded} if there are a (known) computable reals $a,b\in\tpRe$ such that $a < f(x) < b$ for all $x\in\tpX$.
Then we define the integral $\tpCts_{\mathrm{bd}}(\tpX;\tpRe) \fto \tpRe$ by
\[ \textstyle \int_\tpX f(x) \, d\mu(x) = \int_\tpX \bigl(a+f(x)\bigr) \,d\mu(x)-a\,c = b\,c - \int_\tpX \bigl(b-f(x)\bigr) \,d\mu(x) . \]
\end{definition}
It is clear that the integrals defined above are computable in $\tpRe_{\gtrless}$ and that the lower and upper integrals agree if $f$ is continuous.
If $\tpX$ is compact, then any (semi)continuous function is effectively bounded, so the integrals always exist.

In order to define a valuation given a positive linear functional $\tpCts_{\mathrm{cpt}}(\tpX;\tpRe)\fto\tpRe$ on compactly-supported continuous functions, we need some way of approximating the characteristic function of an open set by continuous functions.
If $\tpX$ is \emph{effectively regular}, then given any open set $U$, we can construct an increasing sequence of closed sets $A_n$ such that $\bigcup_{n\to\infty} A_n=U$.
Further, a type $\tpX$ is \emph{effectively quasi-normal} if given disjoint closed sets $A_0$ and $A_1$, we can construct a continuous function $\phi:\tpX\fto[0,1]$ such that $\phi(A_0)=\{0\}$ and $\phi(A_1)=\{1\}$ using an effective Uryshon lemma; see~\cite{Schroder2009} for details.

We then have an effective version of the Riesz representation theorem:
\begin{theorem}\label{thm:effectiveriesz}
Suppose $\tpX$ is an effectively regular and quasi-normal type.
Then type of locally-finite valuations $(\tpX\fto\tpSi)\fto\tpHl$ is effectively equivalent to the type of positive linear functionals $\tpCts_{\mathrm{cpt}}(\tpX\fto\R)\fto\tpRe$ on continuous functions of compact support.
%Let $\tpX$ be an effectively locally-compact Hausdorff space. Then the types $L^+(C(\tpX;[0,1]);\R)$ and $C(\opset(\tpX);\R_<)$ are equivalent.
\end{theorem}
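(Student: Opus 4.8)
The plan is to exhibit mutually inverse computable maps between the two types, reducing the genuinely hard direction to the already-established equivalence between valuations and lower-semicontinuous linear functionals (Theorem~\ref{thm:lowermeasurevaluation}). Throughout, for an open $U$ I write $f\prec U$ to mean $f\in\tpCts_{\mathrm{cpt}}(\tpX\fto\R)$ with $0\le f\le1$ and $\supp(f)\subset U$, and more generally $f\prec\psi$ for $\psi:\tpX\fto\tpHl$ to mean $f$ compactly supported with $0\le f\le\psi$.

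From valuations to functionals, I would send a locally-finite valuation $\nu$ to $\Lambda_\nu(f)=\int_\tpX f\,d\nu$. Given $f\in\tpCts_{\mathrm{cpt}}(\tpX\fto\R)$ with compact support $K$, I would choose a relatively compact open $W\supset K$; local finiteness makes the restriction of $\nu$ to $W$ effectively finite and $f$ is effectively bounded there, so the bounded integral of the preceding definition applies and yields $\Lambda_\nu(f)\in\tpRe$. Linearity and positivity of $\Lambda_\nu$ are inherited from the corresponding properties of the lower integral, and computability is immediate from the computability of the integration operators already established.

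For the converse I would first extend a positive linear functional $\Lambda$ to lower-semicontinuous arguments by $\hat\Lambda(\psi)=\sup\{\Lambda(f)\mid f\prec\psi\}$ and then set $\nu_\Lambda(U)=\hat\Lambda(\chif_U)$. To see that $\hat\Lambda(\psi)$ is computable in $\tpHl$ it suffices to produce an effectively enumerable family $f_n\prec\psi$ that is cofinal for $\Lambda$: using effective regularity I would exhaust the relevant superlevel sets of $\psi$ from inside by closed sets $A_n$ with $A_n\subset\mathrm{int}(A_{n+1})$ and relatively compact closure, and using effective quasi-normality (the effective Urysohn lemma) I would build $f_n\prec\psi$ taking the appropriate value on $A_n$ and supported inside $A_{n+1}$. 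Any $f\prec\psi$ has compact support contained in $\bigcup_n\mathrm{int}(A_n)$, hence in some $A_m$, so $f\le f_m$ and $\Lambda(f)\le\Lambda(f_m)$; thus $\hat\Lambda(\psi)=\sup_n\Lambda(f_n)$, a computable countable supremum in $\tpHl$. Granting that $\hat\Lambda$ is a continuous linear functional, Theorem~\ref{thm:lowermeasurevaluation} then identifies $U\mapsto\hat\Lambda(\chif_U)=\nu_\Lambda(U)$ with a valuation, which is locally finite since $\Lambda$ is finite on every compactly-supported function.

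The main obstacle is proving that $\hat\Lambda$ is linear. Positive homogeneity and superadditivity $\hat\Lambda(\psi_1)+\hat\Lambda(\psi_2)\le\hat\Lambda(\psi_1+\psi_2)$ are immediate, since $f_1\prec\psi_1$ and $f_2\prec\psi_2$ give $f_1+f_2\prec\psi_1+\psi_2$. The reverse inequality is the crux: given $f\prec\psi_1+\psi_2$ one must split $f=f_1+f_2$ with $f_i\prec\psi_i$, which is exactly an effective subordinate partition of unity on the compact set $\supp(f)$ and is where effective quasi-normality enters essentially; carrying out this splitting uniformly, so that the whole construction stays computable, is the technically delicate step I expect to dominate the proof. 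Once linearity is secured, it remains to verify that the two maps are mutually inverse. The identity $\nu_{\Lambda_\nu}(U)=\sup\{\int_\tpX f\,d\nu\mid f\prec U\}=\nu(U)$ follows from inner approximation of $\chif_U$ by the functions $f_n$ together with the continuity of $\nu$, and $\Lambda_{\nu_\Lambda}=\Lambda$ follows by integrating a compactly-supported $f$ against $\nu_\Lambda$ and recognising the value, via the same effective-regularity approximation, as $\Lambda(f)$.
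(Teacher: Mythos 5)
You should first know that the paper itself states Theorem~\ref{thm:effectiveriesz} \emph{without proof}: the only guidance it gives is the preceding paragraph, which says that effective regularity yields an increasing exhaustion of each open set by closed sets and that effective quasi-normality supplies Urysohn functions. Your functional-to-valuation direction is exactly the construction those remarks are setting up, and your diagnosis that the crux is an effective partition of unity (splitting $f\prec\psi_1+\psi_2$ as $f_1+f_2$ with $f_i\prec\psi_i$) to secure linearity of $\hat\Lambda$ is correct; routing the modularity of $\nu_\Lambda$ through Theorem~\ref{thm:lowermeasurevaluation} by extending $\Lambda$ to lower-semicontinuous arguments via an effectively enumerable cofinal family is a sensible packaging, and the cofinality argument ($\supp(f)\subset A_m$ forces $f\le f_m$) is sound at least for $\psi=\chif_U$, which is what the valuation needs.

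The genuine gap is in your valuation-to-functional direction, in the sentence ``local finiteness makes the restriction of $\nu$ to $W$ effectively finite.'' It does not: the paper's local finiteness is the purely classical condition $\nu(W)<\infty$, whereas effective finiteness requires $\nu(W)$ to be a \emph{computable real}, and a name of $\nu$ as an element of $(\tpX\fto\tpSi)\fto\tpHl$ supplies only lower approximations. Any finite prefix of such a name is consistent with the larger valuation $\nu+\epsilon\,\delta_x$ for $x\in W$, so no upper bound on $\nu(W)$ --- and hence no upper bound on $\int_\tpX f\,d\nu$ for $f\geq 0$ --- can be computed, and the bounded-integration definition you invoke is unavailable. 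Concretely, take $\nu=c\,\delta_{x_0}$ with $c$ lower-computable but not computable: this is a computable point of the type of locally-finite valuations, yet $\Lambda_\nu(f)=c\,f(x_0)$ is not computable in $\tpRe$ for any $f$ with $f(x_0)=1$. So your plan (and arguably the theorem as literally stated) only goes through if the valuation side is represented with additional upper information on compacta --- e.g.\ names including the upper valuation $\bar\nu$ on relatively compact sets, as in the effectively finite case --- or if the functional side is weakened to take values in $\tpRe_<$ on positive functions. A second, smaller omission: your choice of a relatively compact open $W\supset K$ presupposes effective local compactness, which is not among the stated hypotheses of effective regularity and quasi-normality, though it is implicit in the paper's locally-compact setting (compare Theorem~\ref{thm:valuationborelmeasure}). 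You should flag both points explicitly rather than absorb them into the hypotheses silently.
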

%The requirement that $\tpX$ be effectively quasi-normal means that not only there exists a sequence of \emph{continuous} functions converging to $\chif_U$ in $(\tpX\fto\tpHl)\fto\tpHl$, but also that such a sequence can be effectively constructed given a name of $U$.
%\begin{proof}
%\marginpar{Sketchy}
%A bounded positive linear functional $L:\tpCts_{\mathrm{bd}}(\tpX;\R)\fto\R$ effectively induces a valuation $\nu:\tpOp(\tpX)\fto\tpHl$ on $\tpX$ by \[ \nu(U) = %\sup\{ L(\phi) \mid \phi\in\tpCts(\tpX;[0,1]) \wedge \phi(\tpX\setminus U)=\{0\} \} . \]
%This construction extends to locally-finite measures and compactly-supported functions $C_{\mathrm{cpt}}(\tpX\fto\R)$.
%\end{proof}

We consider lower-semicontinuous functionals $(\tpX \fto \tpHl) \fto \tpHl$ to be more appropriate as a foundation for computable measure theory than the continuous functionals $(\tpX \fto \tpRe) \fto \tpRe$, since the equivalence given by Theorem~\ref{thm:lowermeasurevaluation} is entirely independent of any assumptions on the type $\tpX$ whereas the equivalence of Theorem~\ref{thm:effectiveriesz} requires extra properties of $\tpX$ and places restrictions on the function space.

A similar monadic approach to probability measures~\cite{Escardo2009} based on type theory identified the type of probability measures on the Cantor space $\Omega=\{0,1\}^\omega$ with the type of integrals $(\Omega \fto \I) \fto \I$ where $\I=[0,1]$ is the unit interval.
%However, we prefer to work with $\H$ and the topology of lower convergence due to the equivalence with valuations discussed above.

\section{Computable Random Variables}

A computable theory of random variables should, at a minimum, enable us to perform certain basic operations, including:
\begin{enumerate}[(i)]
 \item Given a random variable $X$ and open set $U$, compute lower-approximation to $\Pr(X\in U)$. \label{property:distribution}
 \item Given random variables $X_1,X_2$, compute the random variable $X_1\times X_2$ giving the joint distribution. \label{property:product}
 \item Given a random variable $X$ and a continuous function $f$, compute the image $f(X)$. \label{property:image}
 \item Given a sequence of random variables $X_1,X_2,\ldots$ converging effectively in probability, compute a limit random variable $X_\infty = \lim_{m\to\infty} X_m$. \label{property:limit}
 \item Given a probability distribution $\nu$ on a sufficiently nice space $\tpX$, compute a random variable $X$ with distribution $\nu$. \label{property:realisation}
 \item Given random variables $X_1,X_2$, compute a random variable $X_1\otimes X_2$ such that $\Pr(X_1\otimes X_2)\in(U_1\times U_2)=\Pr(X_1\in U_2) \Pr(X_2\in U_2)$. \label{property:independence}
\end{enumerate}
Property~(\ref{property:distribution}) states that we can compute the distribution of a random variable, while property~(\ref{property:product}) implies that a random variable is \emph{more} than its distribution; it also allows us to compute its joint distribution with another random variable.
Property~(\ref{property:image}) also implies that for random variables $X_1,X_2$ on a computable metric space $(\tpX,d)$, the random variable $d(X_1,X_2)$ is computable in $\R^+$, so the probability $\Pr(d(X_1,X_2)<\epsilon)$ is computable in $\tpIv_<$, and $\Pr(d(X_1,X_2)\leq\epsilon)$ is computable in $\tpIv_>$.
Property~(\ref{property:limit}) is a completeness property and allows random variables to be approximated.
Property~(\ref{property:realisation}) shows that random variables can realise a given distribution, while property~(\ref{property:independence}) shows that independent random variables can be constructed realising a given distribution.
These properties are similar to those used in~\cite{Kersting2008}.

%Since $P:(\Omega\fto\Hl)\fto \Hl$ induces a valuation $P:\opset(\Omega)\fto \Hl$, $X$ should define a computable function $\opset(\tpX)\rightarrow \opset(\Omega)$.
%If $X$ is a single-valued function $\Omega\rightarrow \tpX$, this is precisely the condition that $X$ is continuous.
The standard approach to probability theory used in classical analysis is to define random variables as measurable functions over a base probability space.
Given types $\tpX$ and $\tpY$, a representation of the Borel measurable functions $f:\tpX\fto\tpY$ was given in~\cite{Brattka2005}, but this does not allow one to compute lower bounds for the measure of $f^{-1}(V)$ for $V\in\tpOp(\tpY)$.
Ideally, one would like a representation of bounded measurable functions $f:\tpX\fto \R$ such that for every finite measure $\mu$ on $\tpX$, the integral $\int_\tpX f(x) \, d\mu(x)$ is computable.
But then $f(y) = \int_\tpX f(x) \, d\delta_y(x)$ would be computable, so $f$ would be continuous.
Any effective approach to measurable functions and integration must therefore take some information about the measure into account.

In the approach of~\cite{BishopCheng1972}, a notion of full-measure set was given independently of a specific measure, but this introduces additional technical details.
In~\cite{BishopBridges1985}, integrable functions are defined as limits in an \emph{integration space} of functions, and measurable functions through approximation by integrable function.
In the approach of \cite{GoubaultLarrecqVaracca2011} a notion of continuous random variable was introduced as a continuous function on $\supp(\nu)$, where $\nu$ is a valuation on the Cantor space $\{0,1\}^\omega$. However, in order to define a joint distribution, we need to fix the measure $\nu$, but for fixed $\nu$, the set of continuous functions is not expressive enough. For example, using the standard probability measure $P$ on $\{0,1\}^\omega$, there is no continuous total function $X:\{0,1\}^\omega \fto \{0,1\}$ such that $\Pr(X(\omega)=1)=1/3$.
In~\cite{Spitters2003}, a type of integrable real-valued functions is defined as the completion of the continuous functions under the metric defined by $d(f,g)=\int_\tpX |f(x)-g(x)|\,d\mu(x)$, and extended to a type of measurable functions.
This approach is natural, constructive, and allows for integrals of measurable functions to be computed; it is this approach we shall use here.

We will consider random variables on a fixed probability space $(\Omega,P)$.
Since any probability distribution on a Polish space is equivalent to a distribution on the standard Lesbesgue-Rokhlin probability space~\cite{Rokhlin1952}, it is reasonable to take the base space to be the Cantor space $\Sigma=\{0,1\}^\omega$ and $P$ the standard measure.

\subsection{Measurability}

\begin{definition}[Continuous random variable]
An \emph{continuous random variable} on $(\Omega,P)$ with values in $\tpX$ is a continuous function $X:\Omega\fto \tpX$ .
\end{definition}
We will sometimes write $\P(X\in U)$ as a shorthand for $P(\{\omega\in\Omega \mid X(\omega)\in U\})$.
Continuous random variables $X$ and $Y$ are considered equal if $P(\{\omega \in \Omega \mid X(\omega) \neq Y(\omega)\})=0$. In other words, $X$ and $Y$ are \emph{almost-surely equal}.

Suppose $\tpX$ is a Polish space, i.e. a space which is separable and complete under the metric $d$.
Define  the \emph{Fan metric} on continuous random variables by
\begin{equation} \label{eq:randomvariablefanmetric}
 \begin{aligned} d(X,Y) &= \sup\!\big\{ \varepsilon\in\Q^+ \mid \ P\big(\{\omega\in\Omega \mid d(X(\omega),Y(\omega))>\varepsilon\}\big) > \varepsilon\big\} \\
   &= \inf\!\big\{ \varepsilon \in \Q^+ \mid \ P\big(\{\omega\in\Omega \mid d(X(\omega),Y(\omega))\geq\varepsilon\}\big) < \varepsilon\big\} . \end{aligned}
\end{equation}
Given the probability distribution (valuation) $P$ and a computable metric $d:\tpX\times \tpX\fto\R^+$, the Fan metric on continuous random variables is easily seen to be computable.
The convergence relation defined by the Fan metric corresponds to convergence in probability.
As an alternative to using the Fan metric, we can consider a uniform structure on $\tpX$, or, if the metric $d$ on $\tpX$ is bounded, the distance \( \textstyle d(X,Y) := \int_\Omega d(X(\omega),Y(\omega)) \, dP(\omega) . \)

\begin{definition}[Measurable random variable]
The type of \emph{measurable random variables} is the effective completion of the type of continuous random variables under the Fan metric~\eqref{eq:randomvariablefanmetric}.
We write $X:\Omega \mfto \tpX$ if $X$ is a measurable random variable taking values in $\tpX$, and let $\rand(\tpX)$ be the type of measurable random variables with values in $\tpX$.
\end{definition}
\noindent
In other words, a random variable is represented  by a sequence $(X_0,X_1,X_2,\ldots)$ of continuous random variables satisfying $d(X_m,X_n)  < 2^{-\min(m,n)}$, and two such sequences are equivalent (represent the same random variable) if $d(X_{1,n},X_{2,n}) \to 0$ as $n\to\infty$.

By standard results on the completion, the Fan metric on continuous random variables extends computably to measurable random variables.
For if $m>n$, then $|d(X_m,Y_m) - d(X_n,Y_n)| \leq d(X_m,X_n)+d(Y_m,Y_n) \leq 2 \cdot 2^{-n}$, so $(d(X_m,Y_m))_{m\in\N}$ is an effective Cauchy sequence converging to a value we define as $d(\lim_{n\to\infty}X_n,\lim_{n\to\infty}Y_n)$.
Further, if $(X_0,X_1,X_2,\ldots)$ is an effective Cauchy sequence converging to $X_\infty$, then $d(X_n,X_\infty)\leq 2^{-n}$.

\begin{remark}
Although a measurable random variable $X$ is \emph{defined} relative to the underlying space $\Omega$, we cannot in general actually \emph{compute} $X(\omega)$ in any meaningful sense for fixed $\omega\in\Omega$! The expression $X(\omega)$ only makes sense for random variables \emph{given} as continuous functions $\Omega\fto \tpX$.
\end{remark}

%\begin{definition}
%A Cauchy sequence $(x_n)_{n\in\N}$ converges \emph{strongly} if $d(x_m,x_n) < 2^{-\min(m,n)}$ for all $m,n$.
%A Cauchy sequence $(x_n)_{n\in\N}$ converges \emph{effectively} there is a computable sequence $\epsilon_n$ such that it is possible to prove $d(x_m,x_n) < \epsilon_{\min(m,n)}$ for all $m,n$.
%A sequence $x_n\to x_\infty$ converges \emph{effectively} if $d(x_n,x_\infty)$ is computable in $\R^+_>$ and $\lim_{n\to\infty}d(x_n,x_\infty)=0$.
%\end{definition}

It will sometimes be useful to consider random variables taking only finitely many values.
\begin{definition}[Simple random variable]
An \emph{simple random variable} on $(\Omega,P)$ with values in $\tpX$ is a continuous function $X:\Omega\fto \tpX$ which takes finitely many values.
\end{definition}
\noindent 
Clearly, if the base space $\Omega$ is connected, then any simple random variable is constant, but for base space $\Sigma$, any continuous random variable can be effectively approximated by simple random variables, which immediately yields effective approximation by measurable random variables.
\begin{lemma}
Given any continuous function $X:\Sigma\fto\tpX$, we can compute a sequence of simple functions $X_m$ converging effectively to $X$ in the uniform metric.
\end{lemma}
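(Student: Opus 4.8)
The plan is to exploit the compactness of $\Sigma=\{0,1\}^\omega$ together with the zero-dimensionality that gives it a clopen basis of \emph{cylinders} $[\sigma]=\{\omega\in\Sigma\mid \sigma \text{ is a prefix of }\omega\}$ for $\sigma\in\{0,1\}^*$. Since $X$ is continuous on the compact space $\Sigma$ it is classically uniformly continuous, and the heart of the proof is to make this effective: from a name of $X$ I will compute, for each $m$, a length $n=n(m)$ such that $X$ sends every length-$n$ cylinder into a ball of radius $2^{-m-1}$ in $\tpX$. Once this modulus is available the rest is essentially bookkeeping.

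To compute such an $n$, fix an effective dense sequence $(\xi(i))_{i\in\N}$ in $\tpX$, so the balls $B_{2^{-m-1}}(\xi(i))$ cover $\tpX$. A name of $X$ as an element of $\tpCts(\Sigma;\tpX)$ provides, for each $i$, an enumeration of the cylinders $[\sigma]$ with $[\sigma]\subseteq X^{-1}\bigl(B_{2^{-m-1}}(\xi(i))\bigr)$, and these enumerated cylinders cover $\Sigma$: any $\omega$ has $X(\omega)\in B_{2^{-m-1}}(\xi(i))$ for some $i$, and then $\omega$ lies in one of the cylinders enumerated for that $i$. Dovetailing over $i$ I enumerate a list of such ``good'' cylinders, each tagged by its centre $\xi(i)$, and after each new entry test whether the finite list accumulated so far already covers $\Sigma$. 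This test is decidable: a finite family of cylinders covers $\Sigma$ iff, taking $N$ to be the maximal length occurring, every one of the $2^N$ strings of length $N$ has a listed cylinder as a prefix. By compactness of $\Sigma$ a finite subcover exists, so the search halts; I set $n$ to be the maximal length of a cylinder in the returned cover. Every length-$n$ cylinder $[\tau]$ is then contained in some listed $[\sigma_j]\subseteq X^{-1}\bigl(B_{2^{-m-1}}(\xi(i_j))\bigr)$, whence $\diam X([\tau])\leq 2^{-m}$.

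With the modulus in hand I define $X_m$ to be constant on each length-$n$ cylinder, with value $X(\tau 0^\omega)$ on $[\tau]$, where $\tau 0^\omega$ denotes the computable point of $\Sigma$ obtained by appending zeros to $\tau$. Each value is computable by applying the computable evaluation map to the name of $X$ and this point, so $X_m$ takes at most $2^n$ computable values; it is locally constant, hence continuous, hence a simple random variable, and a name of it in $\tpCts(\Sigma;\tpX)$ is obtained by enumerating, for each basic ball of $\tpX$, those $\tau$ whose value lies in the ball. Finally, for $\omega\in[\tau]$ both $\omega$ and $\tau 0^\omega$ lie in $[\tau]$, so $d(X(\omega),X_m(\omega))\leq\diam X([\tau])\leq 2^{-m}$; taking the supremum over $\omega$ gives $\sup_{\omega}d(X(\omega),X_m(\omega))\leq 2^{-m}$, i.e. $(X_m)$ converges to $X$ effectively in the uniform metric.

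The main obstacle is the effective uniform-continuity step in the second paragraph. Everything else is routine, but producing the modulus $n(m)$ \emph{uniformly} in the name of $X$ requires extracting a finite subcover from the open cover of $\Sigma$, and this rests on the fact that compactness of the Cantor space is effective --- concretely, that ``a finite set of cylinders covers $\Sigma$'' is a decidable predicate, which is exactly what lets the dovetailed search terminate.
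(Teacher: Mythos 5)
Your proof is correct, and it even uses the same approximants as the paper --- the paper sets $X_m = X\circ I_m$ where $I_m(\omega)=\omega|_m 0^\omega$, which is exactly your function constant on each cylinder $[\tau]$ with value $X(\tau 0^\omega)$ --- but the effectivity argument is genuinely different. The paper never computes a modulus of uniform continuity: it observes that the uniform error $d(X_m,X)=\sup_{\omega\in\Sigma} d(X(I_m(\omega)),X(\omega))$ is itself a computable real, being the supremum of a continuous real-valued function over the compact set $\Sigma$, and that it tends to $0$ by classical (non-effective) uniform continuity; effective convergence then follows by searching for indices at which the computable error is verified to be small. You instead make uniform continuity effective up front, extracting a modulus $n(m)$ by enumerating cylinders inside the preimages of small balls about a dense sequence of $\tpX$ and using the decidability of ``these finitely many cylinders cover $\Sigma$'' to terminate the dovetailed search, which yields the a priori rate $d(X_m,X)\leq 2^{-m}$. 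Each route buys something: yours makes the use of effective compactness of Cantor space explicit and delivers an approximation with a known rate indexed by $m$, the cleanest form of effective convergence; the paper's is shorter and avoids needing a computable dense sequence in $\tpX$ (your ball cover by $B_{2^{-m-1}}(\xi(i))$ assumes $\tpX$ is effectively separable, a hypothesis not in the lemma's statement, though harmless in the paper's ambient setting of computable metric spaces), since it only manipulates the composed real-valued function $\omega\mapsto d(X(I_m(\omega)),X(\omega))$ on $\Sigma$ --- though the computability of that supremum (its upper bounds in particular) secretly rests on the same effective compactness of $\Sigma$ that you invoke openly.
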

\begin{proof}
Take $I_m(\omega)=\omega|_m 0^\omega$ for all $\omega\in\Sigma$, $m\in\N$, and let $X_m = X \circ I_m$.
Then $d(X_m,X)=\sup_{\omega\in\Sigma} d(X(I_m(\omega)),X(\omega))$ is computable, being the supremum of a continuous function over a compact set.
Further, $d(X_m,X)\to 0$ as $m\to\infty$ since $X$ is uniformly continuous, so $X_m$ converges effectively.
\end{proof}

It is also useful to consider more general classes of random variables by allowing for partial functions on a full-measure set. 
This is important if the base space $\Omega$ is connected, but $\tpX$ is path-connected.
\begin{definition}[Piecewise-continuous random variable]\label{defn:piecewisecontinuousrandomvariable}
A \emph{piecewise-continuous random variable} on $(\Omega,P)$ with values in $\tpX$ is a continuous partial function $X:\Omega\pfto \tpX$ such that ($\dom(X)\in\tpOp(\Omega)$ and $P(\dom(X))=1$.
\end{definition}
\noindent
We use the terminology ``piecewise-continuous'' since $X:\omega\pfto \tpX$ may arise as the restriction of a piecewise-continuous function to its continuity set.
%Note that the Fan metric extends naturally to piecewise-continuous random variables.
%\begin{proposition}
%Any piecewise-continuous random variable defined on a full-measure open set is a measurable random variable.
%\end{proposition}
%\noindent
%This result is a special case of the more general Proposition~\ref{prop:almostsurelycontinuousrandomvariable}.
%\begin{proof}
%Let $X$ be a piecewise-continuous random variable.
%Let $x_*$ be an arbitrary point of $\tpX$; a suitable point can be effectively constructed as $x_*=X(\omega_*)$ for some $\omega_*\in\dom(X)$.
%Compute an increasing sequence of clopen sets $W_n\subset\dom(X)$ such that $\mu(W_n)>1-2^{-n}$.
%Define $X_n(\omega)=X(\omega)$ for $\omega\in W_n$, and $X_n(\omega)=x_*$ otherwise.
%Then clearly $P(d(X_m,X_n)>0) \leq \mu(\Omega\setminus W_n) \leq 2^{-n}$ for $m\leq n$, so $(X_n)_{n\in\N}$ is an effective Cauchy sequence of random variables, and converges in the Fan metric to $X$.
%\end{proof}

By~\cite[Theorem~2.2.4]{Weihrauch1999}, machine-computable functions $\{0,1\}^\omega\fto\{0,1\}^\omega$ are defined on a $G_\delta$-subsets of $\{0,1\}^\omega$.
Indeed, any function into a metric space is continuous on a $G_\delta$ set of points.
This makes functions defined and continuous on a full-measure $G_\delta$-subset of $\Omega$ a natural class of random variables.
\begin{definition}[Almost-surely continuous random variable]\label{defn:almostsurelycontinuousrandomvariable}
An \emph{almost-surely-continuous random variable} on $(\Omega,P)$ with values in $\tpX$ is a continuous partial function $X:\Omega\pfto \tpX$ such that $\dom(X)$ is a $G_\delta$ set and $P(\dom(X))=1$, where $P(\bigcap_{n\in\N} U_n)=1$ if $P(U_n)=1$ for all $n$.
\end{definition}
The following result shows that almost-surely continuous random variables are measurable random variables for the base space $\Sigma$.
\noindent
\begin{proposition}\label{prop:almostsurelycontinuousrandomvariable}
Suppose the base probability space is $\{0,1\}^\omega$.
Then any almost-surely-continuous random variable defined on a full-measure open set is effectively a measurable random variable.
However, not all measurable random variables are almost-surely continuous.
\end{proposition}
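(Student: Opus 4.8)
The plan is to handle the two assertions separately: for the positive direction, to present an almost-surely-continuous random variable with \emph{open} full-measure domain as an explicit strong Cauchy sequence of continuous random variables; and for the negative direction, to construct a computable measurable random variable that is not almost-surely equal to any function continuous on a full-measure $G_\delta$ set.

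For the positive direction, let $X:\Sigma\pfto\tpX$ be continuous with $U=\dom(X)$ open and $P(U)=1$, as in Definition~\ref{defn:piecewisecontinuousrandomvariable}. Since the base space is $\Sigma=\{0,1\}^\omega$, a name of $U$ presents it as a union of basic cylinders, which I would first replace by a pairwise disjoint family $U=\bigcup_k C_k$; the finite unions $U_N=\bigcup_{k\le N}C_k$ are then \emph{clopen} and increase to $U$. Fixing a computable base point $x_0=\xi(0)\in\tpX$ (available from effective separability), I define the total function $X_N$ to agree with $X$ on $U_N$ and to equal $x_0$ off $U_N$. Because $U_N$ is clopen and $X$ is continuous on $U\supseteq U_N$, each $X_N$ is a genuine continuous random variable.

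Next I would estimate the Fan metric. For $M>N$ the functions $X_N$ and $X_M$ can differ only on $U_M\setminus U_N$, so $P(\{\omega : d(X_N(\omega),X_M(\omega))>\varepsilon\})\le 1-P(U_N)$ for every $\varepsilon>0$, whence $d(X_N,X_M)\le 1-P(U_N)$ directly from the definition~\eqref{eq:randomvariablefanmetric}. The measures $P(U_N)$ are dyadic rationals computable exactly, and they converge to $P(U)=1$, so I can search for an increasing sequence $N(n)$ with $1-P(U_{N(n)})<2^{-n}$, obtaining a strong Cauchy sequence $(X_{N(n)})_n$ of continuous random variables, i.e. a name of a measurable random variable. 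Finally, for every $\omega\in U$ one has $X_{N(n)}(\omega)=X(\omega)$ for all large $n$, so $X_{N(n)}\to X$ almost surely, hence in probability, identifying the completion limit with $X$. I expect the only genuinely non-routine point here to be the \emph{termination} of the search for $N(n)$: this rests on the promise $P(U)=1$, which is part of the datum of a piecewise-continuous random variable and which guarantees that the computable lower approximations $P(U_N)$ eventually exceed $1-2^{-n}$.

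For the negative direction I would exhibit a \emph{fat} set $E\subseteq\Sigma$, meaning $0<P(E\cap C)<P(C)$ for every nonempty cylinder $C$, chosen so that $E$ is a countable disjoint union of cylinders of effectively summable measure. Such an $E$ is produced by a greedy enumeration of all finite strings $\sigma$: at each stage, inside the still-uncommitted part of $[\sigma]$ (nonempty, since only finitely many cylinders of small total measure have been committed) I reserve one sub-cylinder for $E$ and a disjoint one for its complement. Taking $\tpX=\{0,1\}$, the clopen indicators of the finite partial unions form a strong Cauchy sequence in the Fan metric converging to $\chif_E$, so $\chif_E$ is a computable measurable random variable. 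To see it is \emph{not} almost-surely continuous, suppose for contradiction that $g$ is continuous on a full-measure $G_\delta$ set $D$ with $g=\chif_E$ almost everywhere. Then $D_1=g^{-1}(1)\cap D$ is relatively open in $D$ and has $P(D_1)=P(E)>0$, so it is nonempty; choosing $\omega\in D_1$ and a cylinder $C\ni\omega$ with $C\cap D\subseteq D_1$ forces $\chif_E=1$ almost everywhere on $C$, i.e. $P(E^c\cap C)=0$, contradicting fatness. The main obstacle in this half is pinning down the fat set: the natural ``global'' candidates are tail events and hence have measure $0$ or $1$ by the zero--one law, so an explicit cylinder-by-cylinder recursion is genuinely needed in order to defeat every cylinder simultaneously.
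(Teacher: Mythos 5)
Your positive direction is correct, and it is actually a more concrete route than the paper's own argument: the paper works through the machine representation $\xi$ of $X$ and the observation that the set of inputs on which $\xi$ is ``provably defined up to error $2^{-n}$'' is a full-measure countable union of cylinders, whereas you exploit directly that the statement restricts to an \emph{open} full-measure domain $U$. Decomposing $U$ into disjoint cylinders, truncating to clopen $U_N$, extending by a computable base point, bounding the Fan metric by $1-P(U_N)$, and searching for $N(n)$ with $1-P(U_{N(n)})<2^{-n}$ (termination guaranteed by the promise $P(U)=1$) is all sound, and the identification of the completion limit with $X$ via almost-sure convergence is fine.

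The negative direction, however, contains a genuine flaw: the ``fat'' set you specify cannot exist. If $E$ is a nonempty countable union of cylinders, it is open, hence contains some basic cylinder $C^{*}$ --- indeed your greedy construction explicitly reserves whole sub-cylinders for $E$ --- and then $P(E\cap C^{*})=P(C^{*})$, contradicting the demanded property $P(E\cap C)<P(C)$ for \emph{every} nonempty cylinder $C$. Correspondingly, your contradiction argument breaks at exactly this point: after choosing $\omega\in D_1$ and a cylinder $C\ni\omega$ with $C\cap D\subseteq D_1$, you need $P(E^{c}\cap C)>0$, but nothing prevents the chosen $C$ from lying entirely inside the open set $E$, where this fails. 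Two-sided fatness is simply incompatible with $E$ being open (or closed), which is why the paper instead uses a \emph{one-sided} fat object: a decreasing sequence of clopen sets $W_n$ with $P(W_n)=(1+2^{-n})/2$ whose intersection $K$ is a fat Cantor set --- closed, nowhere dense, of measure $1/2$. With that substitution your own argument goes through verbatim: the indicators $\chif_{W_n}$ are continuous random variables forming a strong Cauchy sequence with limit $\chif_K$; if $g$ were continuous on a full-measure $G_\delta$ set $D$ with $g=\chif_K$ almost everywhere, then $g^{-1}(1)\cap D$ is relatively open of measure $P(K)>0$, so some cylinder $C$ satisfies $C\cap D\subseteq g^{-1}(1)$, forcing $P(C\setminus K)=0$; but nowhere density of $K$ makes $C\setminus K$ a nonempty open set, hence of positive measure --- a contradiction. (Your observation that tail events are useless by the zero--one law is correct, but the lesson it teaches is that the witness must be neither open nor closed on the side carrying the indicator's value $1$, not that an open set can be made fat by a cylinder-by-cylinder recursion.)
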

\begin{proof}
Given a almost-surely-continuous random variable $X$, we can construct a sequence of continuous random variables converging weakly to $X$.
Let $\xi:\{0,1\}^\omega\fto\{0,1\}^\omega$ be the code of machine-computable function representing $X$, so $X=\delta\circ\xi$ where $\delta:\{0,1\}^\omega\psfto \tpX$ is a representation of $\tpX$, and $\dom(X)=\dom(\xi)$.
Fix $\epsilon=2^{-n}>0$.
Consider the set of $\omega$ on which $\xi$ is defined and for some $\delta$ maps the $\delta$-ball abound $\omega$ into the $\epsilon$-ball about $\xi(\omega)$.
Since $\xi$ is continuous on its domain, this set is $\dom(\xi)$.
Hence on some full-measure open set, $\xi$ is provably defined up to error $2^{-n}$.
Since this set is a countable union of cylinder sets, we can compute $X_n$ agreeing with $X_\infty$ up to $2^{-n}$ on a set of measure $1-2^{-n}$.

Conversely, we can define a strong Cauchy sequence $X_n$ of piecewise-continuous random variable taking values in $\{0,1\}$ such that $X_n=1$ on a decreasing sequence of closed sets $W_n$ of measure $(1+2^{-n})/2$ whose limit is a Cantor set.
Then $\lim_{n\to\infty}X_n$ is discontinuous on a set of positive measure.
\end{proof}

Similarly to the notion of measurable function, we can define the notion of measurable set.
\begin{definition}[Measurable set]
A \emph{measurable set} $A$ in $\Omega$ (more precisely, the characteristic function $\chi_A$ of a measurable set $A$) is a measurable random variable in $\tpIv=[0,1]$ such that $\Pr(\chi_A\in\{0,1\})=1$.
\end{definition}
\noindent
If $\Omega=\Sigma$, then the characteristic function $\chi_A$ is the limit in probability of $\chi_{A_n}$ for clopen sets $A_n$.
Equivalently, any measurable set is a limit of an effective Cauchy sequence of clopen sets $(A_n)_{n\in N}$ under the metric $d(A_m,A_n)=P(A_m \triangle A_n)$.

In classical measure theory, it is also useful to consider the indicator function of a set of values of a random variable, defined as
\begin{equation} I[X\in S]:\Omega\mfto\{0,1\}:\omega \mapsto \begin{cases}1\text{ if } X(\omega)\in S; \\ 0 \text{ if } X(\omega)\not\in S . \end{cases} \end{equation}
If $X$ is a continuous random variable, and $U$ is open, then $I[X\in U]$ is computable as a function $\Omega\fto [0,1]_<$, and if $A$ is closed, then $I[X\in A]$ is computable in $\Omega\fto[0,1]_>$.
These indicator functions cannot be seen as random variables as the range spaces are not Hausdorff.
Indicator functions for measurable random variables taking values in the Polish space $\{0,1\}\subset\R$ are only computable as measurable random variables for clopen sets as the following example shows:
\begin{example}[Uncomputability of indicator functions]
Let $X$ be a random variable and $U$ an open set.
Suppose $I[X\in U]$ were to be computable as a measurable random variable given $X$ and $A$.
Then $\Pr(X\in A) = \Pr(I[X\in U]\geq \tfrac{1}{2})$ would be computable in $[0,1]_>$, so $\Pr(X\in U)$ would be computable in $[0,1]$.
Taking $X=\delta_x$ gives $\Pr(X\in U) = 1$ if $x\in U$ and $0$ if $x\not\in U$, so $U$ would be effectively clsed.
\end{example}
\noindent
However, we shall see that for an open set $U$, the indicator function $I[X\in U]$ induces a valuation on $\Omega$ which is computable.
This makes indicator functions useful when we are only interested in information about probabilities and expectations, such as the submartingale inequality~\eqref{eq:discretesubmartingaleinequality}.

\subsection{Distribution}

We now consider the probability distribution of a measurable random variable.
Let $\tpX$ be a computable metric space.
For a closed set $A$, define $\clN_\epsilon(A):=\{x\in \tpX \mid d(x,A)\leq \varepsilon\}$, and for an open set $U$ define $I_\varepsilon(U):=\tpX\setminus(\clN_\varepsilon(\tpX\setminus U))=\{x\in U \mid \exists \delta>0, B(x,\varepsilon+\delta)\subset U\}$.
Since $d(x,A)$ is computable in $R^{+,\infty}_<$ by definition of a computable metric space, $\clN_\epsilon(A)$ is computable as a closed set, so $I_\varepsilon(U)$ is computable as an open set.
Note that $I_{\varepsilon_1+\varepsilon_2}(U) \subset I_{\varepsilon_1}(I_{\varepsilon_2}(U))$.
\begin{definition}[Distribution of a measurable random variable]
For a measurable random variable $X$, define its \emph{distribution} by
%\[ \Pr(X\in U) = \sup\{ P(\{\omega\in\Omega \mid Y(\omega)\in V\}) - \varepsilon \mid \varepsilon\in\Q^+,\ V\subset I_\varepsilon(U),\ d(Y,X)<\varepsilon\} . \]
\[ \Pr(X\in U) = \sup\{ P(Y\in V) - \varepsilon \mid \varepsilon\in\Q^+,\ V\subset I_\varepsilon(U),\ d(Y,X)<\varepsilon\} . \]
where $Y$ ranges over continuous random variables and $V$ over open sets.
\end{definition}

\begin{theorem}[Computability of distribution]
\label{thm:randomvariabledistribution}
Suppose $(\tpX,d)$ is a computable metric space.
The distribution of a measurable random variable $X$ taking values in $\tpX$ is a valuation, and is computable from a name of $X$.
If $X$ is a continuous random variable, then $\Pr(X\in U) = P(\{\omega\in\Omega \mid X(\omega)\in U\})$.
\end{theorem}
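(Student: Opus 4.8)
The plan is to first reduce the defining supremum to a computable form, then read off computability and the valuation axioms, and finally verify the consistency formula. Since $P(Y\in\cdot)$ is monotone, the inner supremum over open $V\subset I_\varepsilon(U)$ is attained at $V=I_\varepsilon(U)$, so $\Pr(X\in U)=\sup\{P(Y\in I_\varepsilon(U))-\varepsilon\mid \varepsilon\in\Q^+,\ Y\text{ continuous},\ d(Y,X)<\varepsilon\}$. Writing $X$ as a strong Cauchy sequence $(X_m)$ of continuous random variables with $d(X_m,X)\le 2^{-m}$, I would show this supremum is already realised by the $X_m$. The key elementary fact is the inclusion: if $d(Z(\omega),W(\omega))<\eta$ and $W(\omega)\in I_t(U)$ then $Z(\omega)\in I_{t-\eta}(U)$, so that $\{W\in I_t(U)\}\setminus\{d(Z,W)\ge\eta\}\subset\{Z\in I_{t-\eta}(U)\}$. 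Given a competitor $Y$ with $d(Y,X)<\varepsilon$, since $|d(X_m,Y)-d(X,Y)|\le d(X_m,X)\to0$ we have $d(X_m,Y)<\varepsilon$ for all large $m$; applying the inclusion with $Z=X_m$, $W=Y$ and an $\eta<\varepsilon$ witnessing $d(X_m,Y)<\varepsilon$ in the Fan metric produces a valid fine-scale term $P(X_m\in I_{\varepsilon-\eta}(U))-(\varepsilon-\eta)$ that exceeds $P(Y\in I_\varepsilon(U))-\varepsilon$. Hence $\Pr(X\in U)=\sup\{\nu_m(I_\varepsilon(U))-\varepsilon\mid m\in\N,\ \varepsilon\in\Q^+,\ \varepsilon>2^{-m}\}$, where $\nu_m:=P(X_m^{-1}(\cdot))$.

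Computability is then immediate: $I_\varepsilon(U)$ is computable as an open set, $X_m$ is continuous, so each $\nu_m(I_\varepsilon(U))=P(X_m^{-1}(I_\varepsilon(U)))$ and hence each term $\nu_m(I_\varepsilon(U))-\varepsilon$ is computable in $\tpHl$, and the countable supremum over the pairs $(m,\varepsilon)$ is computable in $\tpHl$. As this is uniform in $U$, the map $U\mapsto\Pr(X\in U)$ is a computable, hence continuous, function $\tpOp(\tpX)\fto\tpHl$; the monotonicity and (directed) continuity conditions demanded of a valuation then follow automatically from the properties of continuous functions into $\tpHl$ recorded earlier. Moreover $\Pr(X\in\emptyset)=0$, since $I_\varepsilon(\emptyset)=\emptyset$ makes every term vanish in $\tpHl$.

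The remaining valuation axiom is modularity, which I expect to be the main obstacle. Each $\nu_m$ is a genuine pushforward valuation, so satisfies modularity on all open arguments, and $I_\varepsilon$ respects the lattice operations to the extent that $I_\varepsilon(U_1\cap U_2)=I_\varepsilon(U_1)\cap I_\varepsilon(U_2)$ and $I_\varepsilon(U_1)\cup I_\varepsilon(U_2)\subset I_\varepsilon(U_1\cup U_2)$ (both from $\clN_\varepsilon(A\cup B)=\clN_\varepsilon(A)\cup\clN_\varepsilon(B)$ and $\clN_\varepsilon(A\cap B)\subset\clN_\varepsilon(A)\cap\clN_\varepsilon(B)$). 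Applying modularity of $\nu_m$ to $I_\varepsilon(U_1)$ and $I_\varepsilon(U_2)$, using monotonicity together with the above inclusion, and passing to the supremum along a common cofinal sequence $(m_k,\varepsilon_k)$ with $m_k\to\infty$ and $\varepsilon_k\downarrow0$, yields $\Pr(X\in U_1)+\Pr(X\in U_2)\le\Pr(X\in U_1\cup U_2)+\Pr(X\in U_1\cap U_2)$ directly. The reverse inequality is the delicate point, because $I_\varepsilon(U_1\cup U_2)$ is strictly larger than $I_\varepsilon(U_1)\cup I_\varepsilon(U_2)$: the defect $D=I_\varepsilon(U_1\cup U_2)\setminus(I_{\varepsilon'}(U_1)\cup I_{\varepsilon'}(U_2))$ consists of points $x$ with $B(x,\varepsilon)\subset U_1\cup U_2$ but $B(x,\varepsilon')\not\subset U_1$ and $B(x,\varepsilon')\not\subset U_2$. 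I would show that the contribution $\nu_m(D)$ can be made negligible as $\varepsilon,\varepsilon'\downarrow0$: these defect regions sit inside decreasing open neighbourhoods of the ``shared boundary'' of $U_1$ and $U_2$ whose intersection is empty, so a Lemma~\ref{lem:decreasingopen}-type argument forces their measure to zero in the limit; establishing this vanishing uniformly enough in $m$ is exactly where the work lies, and it restores the reverse inequality and hence modularity.

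Finally, for a continuous random variable $X$ I would prove $\Pr(X\in U)=P(\{\omega\mid X(\omega)\in U\})$ by two inclusions. For ``$\ge$'', take $Y=X$ and $V=I_\varepsilon(U)$; since $I_\varepsilon(U)\nearrow U$ as $\varepsilon\downarrow0$ and $X^{-1}$ preserves this increasing union, the continuity of the valuation $P$ gives $P(X^{-1}(I_\varepsilon(U)))\to P(X^{-1}(U))$, so the supremum is at least $P(X^{-1}(U))$. For ``$\le$'', whenever $Y(\omega)\in V\subset I_\varepsilon(U)$ and $d(Y(\omega),X(\omega))<\varepsilon$ we get $X(\omega)\in B(Y(\omega),\varepsilon)\subset U$, so $\{Y\in V\}\setminus\{d(Y,X)\ge\varepsilon\}\subset\{X\in U\}$; since $d(Y,X)<\varepsilon$ bounds $P(\{d(Y,X)\ge\varepsilon\})$ by $\varepsilon$ in the Fan metric, every admissible term satisfies $P(Y\in V)-\varepsilon\le P(\{\omega\mid X(\omega)\in U\})$, whence $\Pr(X\in U)\le P(\{\omega\mid X(\omega)\in U\})$.
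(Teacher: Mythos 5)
Your main line coincides with the paper's own proof. The paper establishes exactly the same two ingredients: the inequality $\P(X\in U)\geq \P(Y\in V)-\epsilon$ for continuous $X,Y$ with $d(X,Y)<\epsilon$ and $V\subset I_\epsilon(U)$ (your pointwise inclusion under a Fan-metric bound), and the cofinality argument showing every competitor term is eventually dominated along the approximating sequence, yielding $\Pr(X\in U)=\sup_{n}\P(X_n\in I_{2^{-n}}(U))-2^{-n}$, from which computability follows by countable supremum; your two-inclusion treatment of the continuous case is likewise the paper's closing paragraph. One small quantifier repair in your cofinality step: choose a rational $\eta$ with $d(X,Y)<\eta<\varepsilon$ \emph{before} letting $m$ grow, so that for large $m$ both $d(X_m,Y)<\eta$ and $2^{-m}<\varepsilon-\eta$ hold simultaneously; as written, your $\eta$ is chosen after $m$ as a witness for $d(X_m,Y)<\varepsilon$, and then the admissibility condition $\varepsilon-\eta>2^{-m}$ for the term $P(X_m\in I_{\varepsilon-\eta}(U))-(\varepsilon-\eta)$ is not obviously satisfiable. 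This is cosmetic and easily fixed.

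The genuine gap is where you yourself flag it: the reverse modularity inequality. It is worth knowing that the paper's proof does not address modularity at all -- it proves only the supremum formula, computability, and the continuous-case identity, leaving the ``is a valuation'' clause to classical considerations (consistent with the paper's stated policy that some identities ``hold from axiomatic considerations'' rather than being constructively verified, and with the correspondence of Theorem~\ref{thm:valuationborelmeasure}). Your proposed repair does not work as stated, for a concrete reason: Lemma~\ref{lem:decreasingopen} concerns a decreasing sequence of open sets measured by a single \emph{fixed} finite valuation, whereas your defect regions $D$ must be measured against the \emph{varying} pushforwards $\nu_m$, and the lemma provides no uniformity in $m$. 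Applying it instead to the limit functional $U\mapsto\Pr(X\in U)$ is circular, since the lemma's proof itself uses modularity (in the step $\nu\bigl(\bigcap_{n\leq N}A_n\bigr)\geq\nu(U_N)-\sum_{n\leq N}\bigl(\nu(U_n)-\nu(A_n)\bigr)$), which is precisely what you are trying to establish; note also that $D$ is a difference of open sets, hence not open, so $\nu_m(D)$ needs open approximations before the lemma could even be invoked. A non-circular route is the classical one: an effectively convergent sequence in the Fan metric has an almost-surely convergent subsequence, whose a.e.\ limit is a classical measurable function; its pushforward Borel measure is modular, and your supremum formula computes exactly that measure on open sets. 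If you want a fully constructive verification of modularity, that would be a genuine addition to the paper, but it requires a quantitative bound on the defect mass that is uniform along the approximating sequence, and neither your sketch nor the paper supplies one.
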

\begin{proof}
Suppose $X,Y$ are continuous random variables, $d(X,Y)<\epsilon$ and $V\subset I_\epsilon(U)$.
Then $\P(X\in U) \geq \P(Y\in V \wedge d(X,Y)<\epsilon) \geq \P(Y\in V) - \P(d(X(\omega),Y(\omega))\geq\epsilon) \geq \P(Y(\omega)\in V)-\epsilon$.

Now take $(X_n)_{n\in\N}$ to be any sequence of continuous random variables converging effectively to a measurable random variable $X$.
By definition of $\Pr(X\in U)$, we have $\Pr(X\in U) \geq \P(X_m\in I_{2^{-m}}) - 2^{-m}$ for all $m$

Fix $\delta>0$ and take a continuous random variable $Y$ such that $d(X,Y)<\epsilon$ and $\P(Y\in I_\epsilon(U)) - \epsilon > \Pr(X \in U) - \delta$.
By taking $m$ sufficiently large, we can ensure that $\P(Y\in I_{2^{1-m}+\epsilon}(U)) - \epsilon > \Pr(X\in U) - \delta$.
Then since $d(X_m,Y)\leq \epsilon+2^{-m}$, we have $\P(X_m\in I_{2^{-m}})- 2^{-m} \geq \P(Y\in I_{\epsilon+2^{1-m}}(U)) - (\epsilon+2^{1-m}) > \Pr(X\in U) - (\delta +2^{1-m})$.
Since $\delta$ is arbitrary and $m$ may be taken arbitrarily large, we have $\Pr(X\in U) = \sup_{n\in\N} \P(X_n\in I_{2^{-n}}(U))-2^{-n}$, so is computable in $\tpIv_<$.

If $X$ is a continuous random variable, $\P(X\in I_\epsilon(U))\nearrow \P(X\in U)$ as $\epsilon\to 0$ by continuity, so taking $X_n=X_\infty=X$ above, we have $\Pr(X\in U)=\lim_{n\to\infty} \P(X\in I_\epsilon(U))-\epsilon = \P(X\in U)$.
\end{proof}
\begin{remark}
Although the notation $\Pr(X\in U)$ suggests that we can define a measurable random variable $X$ as a \emph{function} from $\Omega$ to $\tpX$, such a function could not be constructed in general, and is not required to compute probabilities.
\end{remark}
\begin{corollary}
If $X$ is a random variable and $A$ is a closed set, then $\Pr(X\in A)$ is computable in $[0,1]_>$.
%In particular, statements of the form $\Pr(X\in A) \geq \epsilon$ can be falsified.
\end{corollary}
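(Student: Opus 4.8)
The plan is to reduce the closed-set case to the open-set case already settled by Theorem~\ref{thm:randomvariabledistribution}, using the induced upper-valuation on closed sets. First I would put $U = \tpX \setminus A$, which is open since $A$ is closed; then by Theorem~\ref{thm:randomvariabledistribution} the distribution $\Pr(X \in \cdot)$ is a valuation, and $\Pr(X \in U)$ is computable in $[0,1]_<$ from a name of $X$.

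Next I would verify that this valuation is a probability valuation, i.e. effectively finite with $\Pr(X \in \tpX) = 1$ known exactly. Taking the distribution formula with the whole space in place of $U$, one has $I_\varepsilon(\tpX) = \tpX$ for every $\varepsilon$, since $\tpX \setminus \tpX = \emptyset$ and $\clN_\varepsilon(\emptyset) = \emptyset$; hence $\Pr(X \in \tpX) = \sup_n \Pr(X_n \in \tpX) - 2^{-n} = \sup_n (1 - 2^{-n}) = 1$, using that each continuous approximant $X_n$ is a total function on $\Omega$ and $P$ is a probability measure. So the total mass is the known computable real $1$, and the induced upper-valuation $\bar{\nu}(A) = \nu(\tpX) - \nu(\tpX \setminus A)$ applies with $\nu = \Pr(X \in \cdot)$, yielding $\Pr(X \in A) = 1 - \Pr(X \in U)$.

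Finally, since $\Pr(X \in U)$ is computable in $[0,1]_<$, i.e. as a supremum of increasing rational lower bounds, the value $1 - \Pr(X \in U)$ is the infimum of the corresponding decreasing rational upper bounds, hence computable in $[0,1]_>$. I do not expect a serious obstacle: the argument is essentially a direct application of the open-set result together with the order-reversal built into the $[0,1]_<$/$[0,1]_>$ representations. The only point genuinely needing care is the confirmation that $\Pr(X \in \tpX)$ equals $1$ exactly and is available as a computable real, since it is precisely the exactness of the total mass that licenses the subtraction $\nu(\tpX) - \nu(U)$ and the passage from the lower to the upper representation.
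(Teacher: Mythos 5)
Your proposal is correct and is exactly the argument the paper leaves implicit: the corollary follows from Theorem~\ref{thm:randomvariabledistribution} by complementation, $\Pr(X\in A) = 1 - \Pr(X\in \tpX\setminus A)$, using the fact that the distribution is an effectively finite (probability) valuation so that the induced upper-valuation $\bar{\nu}(A)=\nu(\tpX)-\nu(\tpX\setminus A)$ is computable in $[0,1]_>$. Your explicit verification that $\Pr(X\in\tpX)=1$ exactly (via $I_\varepsilon(\tpX)=\tpX$ and totality of the continuous approximants) is a worthwhile detail the paper takes for granted, and it is precisely what licenses the subtraction.
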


We now show that we can \emph{construct} a random variable with a given \emph{distribution}.
The result below is an variant of~\cite[Theorem~1.1.1]{HoyrupRojas2009IC}, which shows that any distribution is effectively measurably isomorphic to a distribution on $\{0,1\}^{\omega}$, and the proof is similar.

We first prove the following generally-useful decomposition result, which is essentially a special case of the effective Baire category theorem~\cite{YasugiMoriTsujii1999,Brattka2001}.
\begin{lemma}
Let $X$ be an effectively separable computable metric space, and $\mu$ be a measure on $X$.
Then given any $\epsilon>0$, we can compute a topological partition $\mathcal{B}$ of $X$ such that $\diam(B)<\epsilon$ for all $B\in\mathcal{B}$, and $\mu(X\setminus\bigcup\mathcal{B})=0$.
\end{lemma}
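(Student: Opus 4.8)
The plan is to realise $\mathcal{B}$ as a family of pairwise-disjoint open pieces carved out of balls of radius slightly below $\epsilon/2$ centred at a dense sequence, arranged so that every omitted point lies on a bounding sphere. First I would use effective separability to fix a computable dense sequence $(x_i)_{i\in\N}$ in $X$. For each $i$ I would select a radius $r_i\in(\epsilon/4,\epsilon/2)$ whose sphere $S_i=\{x\mid d(x,x_i)=r_i\}$ is $\mu$-null, and then set
\[ B_n = B(x_n,r_n)\setminus\bigcup_{i<n}\overline{B(x_i,r_i)}, \qquad \mathcal{B}=\{B_n\mid n\in\N\}. \]
Each $B_n$ is open (an open ball minus finitely many closed balls) and is computable as an open set, since in a computable metric space open and closed balls and their complements are computable from the centre and radius.

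The crux, and the only genuinely delicate step, is the effective choice of the radii $r_i$ with $\mu(S_i)=0$; this is exactly where the effective Baire category theorem enters. For a fixed centre $x_i$ the spheres $\{d(\cdot,x_i)=r\}$ are pairwise disjoint, so finiteness of $\mu$ gives $\sum_r \mu(\{d(\cdot,x_i)=r\})\le\mu(X)<\infty$, whence all but countably many radii yield a null sphere. To make the selection effective I would exploit that $r\mapsto\mu(B(x_i,r))$ is computable from below while $r\mapsto\overline{\mu}(\overline{B(x_i,r)})$, the upper-valuation of the closed ball, is computable from above, their difference at $r$ being precisely $\mu(\{d(\cdot,x_i)=r\})$. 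I would then run a nested-interval search inside $(\epsilon/4,\epsilon/2)$: at stage $k$, subdivide the current closed interval by rationals $s_0<\cdots<s_m$; since the closed annuli $\{s_{j-1}\le d(\cdot,x_i)\le s_j\}$ overlap only on spheres their masses sum to at most $2\mu(X)$, so for $m$ large at least one annulus has mass below $2^{-k}$. Such an annulus can be \emph{detected} by computing the upper bound $\overline{\mu}(\overline{B(x_i,s_j)})-\mu(B(x_i,s_{j-1}))<2^{-k}$, and I pass to a closed subinterval of the corresponding open interval $(s_{j-1},s_j)$ of length at most $2^{-k}$. The intervals shrink to a computable real $r_i$ lying strictly inside each chosen $(s_{j-1},s_j)$, so that $\mu(S_i)\le 2^{-k}$ for every $k$, forcing $\mu(S_i)=0$; uniformity of this procedure in $i$ makes $(r_i)_{i\in\N}$, and hence $\mathcal{B}$, computable.

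With the radii in hand the three required properties are routine. Every $B_n\subseteq B(x_n,r_n)$, so $\diam(B_n)\le 2r_n<\epsilon$. For disjointness, if $m<n$ then $B_n\subseteq X\setminus\overline{B(x_m,r_m)}\subseteq X\setminus B_m$. Finally, since $r_i>\epsilon/4$ and $(x_i)$ is dense, the balls $B(x_i,r_i)$ cover $X$; if $y\notin\bigcup_n B_n$, taking the least $n_0$ with $y\in B(x_{n_0},r_{n_0})$ forces $y\in\overline{B(x_i,r_i)}\setminus B(x_i,r_i)\subseteq S_i$ for some $i<n_0$, so $X\setminus\bigcup\mathcal{B}\subseteq\bigcup_i S_i$ and $\mu(X\setminus\bigcup\mathcal{B})\le\sum_i\mu(S_i)=0$. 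Thus $\mathcal{B}$ is the desired computable topological partition. I expect the null-sphere radius selection to be the main obstacle; the remainder is bookkeeping, the only points needing care being finiteness of $\mu$ on the relevant balls (automatic for a finite valuation, and local finiteness suffices) and keeping the whole construction uniform in $i$.
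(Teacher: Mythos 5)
Your proof is correct and takes essentially the same route as the paper: the identical decomposition into pieces $B(x_n,r_n)\setminus\bigcup_{m<n}\overline{B}(x_m,r_m)$ over a computable dense sequence, with the null-sphere radii chosen by the same effective Baire-category argument (the paper phrases it as the computable open dense sets of radii with small annulus mass; your nested-interval annulus search is a concrete implementation of exactly that). Your write-up in fact supplies details the paper elides, notably the semi-decidable detection $\overline{\mu}\bigl(\overline{B}(x_i,s_j)\bigr)-\mu\bigl(B(x_i,s_{j-1})\bigr)<2^{-k}$ and the uniform lower bound $r_i>\epsilon/4$ that guarantees the balls cover $X$.
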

\begin{proof}
For any $\delta>0$, and any $x\in\tpX$, $\{r>0 \mid \mu(\clB(x,r)\setminus B(x,r))<\delta\}$ is a computable open dense set.
We can therefore construct a sequence of rationals $q_k$ such that $|q_k-q_{k+1}|<2^{-k-1}$ and $\mu(\clB(x,q_k)\setminus B(x,q_k))<2^{-k}$.
Then taking $r_\delta(x)=\lim_{k\to\infty} q_k$ yields a suitable radius.

Since $X$ is effectively separable, it has a computable dense sequence $(x_n)_{n\in\N}$.
For $\epsilon>0$, we take as topological partition the sets $B(x_n,r_\epsilon(x_n)) \setminus \bigcup_{m=0}^{n-1} \clB(x_m,r_\epsilon(x_m))$ for $n\in\N$.
\end{proof}
\begin{theorem}
Let $\tpX$ be a computable metric space, and $\nu$ be a valuation on $\tpX$.
Then we can compute a measurable random variable $X$ on base space $\{0,1\}^\omega$ such that for any open $U$, $\Pr(X\in U)=\nu(U)$.
\end{theorem}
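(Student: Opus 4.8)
The plan is to realise $\nu$ (which I take to be a probability valuation, $\nu(\tpX)=1$) as the limit in the Fan metric of a sequence of \emph{simple} continuous random variables $X_m:\{0,1\}^\omega\fto\tpX$, each obtained by coding cylinders of the base space onto the cells of a finite-diameter partition of $\tpX$ whose cell-measures match the measures of those cylinders. The three ingredients are: (a) a refining sequence of computable partitions of $\tpX$ into small open cells of full $\nu$-measure, (b) a measure-respecting assignment of cells to clopen subsets of $\{0,1\}^\omega$, and (c) the distribution theorem (Theorem~\ref{thm:randomvariabledistribution}) to read off $\Pr(X\in U)$ from the approximating sequence.

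First I would produce the partitions. Applying the preceding decomposition lemma at scales $2^{-1},\dots,2^{-m}$ and taking the common refinement yields, for each $m$, a countable partition $\mathcal{B}_m=\{B_{m,0},B_{m,1},\dots\}$ into open cells of diameter $<2^{-m}$ with $\nu\bigl(\tpX\setminus\bigcup\mathcal{B}_m\bigr)=0$, arranged so that $\mathcal{B}_{m+1}$ refines $\mathcal{B}_m$. Because each cell is built from balls whose bounding spheres are $\nu$-null (the radii $r_\epsilon$ of the lemma are chosen precisely so that $\nu(\clB(x,r)\setminus B(x,r))=0$), each open cell $B_{m,i}$ admits a closed companion of equal measure; hence $\nu(B_{m,i})$ is simultaneously lower- and upper-computable, i.e. computable as an exact real. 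I would also note that a computable metric space is effectively separable, so that the lemma applies, and truncate to finitely many cells of total measure $>1-2^{-m}$ to keep each stage finite.

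Next I would code. Identifying $\{0,1\}^\omega$ with $[0,1]$ by $\omega\mapsto\sum_i\omega_i2^{-i-1}$ gives a measure-preserving map from the standard measure $P$ to Lebesgue measure. Listing the cells so that children follow their parent, I assign to $B_{m,i}$ the interval of length $\nu(B_{m,i})$ whose endpoints are the cumulative cell-measures, so that the level-$(m+1)$ intervals subdivide the level-$m$ intervals. Rounding these (exact) endpoints to a sufficiently fine dyadic grid of resolution $2^{-\ell_m}$ turns each interval into a finite union of cylinders, with total measure mismatch $<2^{-m}$; setting $X_m(\omega)$ to a chosen representative $x_{m,i}\in B_{m,i}$ when $\omega$ lies in the cylinders coding $B_{m,i}$ gives a simple, hence continuous, random variable. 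Refinement of the intervals forces $X_{m+1}(\omega)$ and $X_m(\omega)$ to lie in the common cell $B_{m,i}$ (of diameter $<2^{-m}$) except on the rounding-boundary set of measure $<2^{-m}$, so $(X_m)$ is an effective Cauchy sequence in the Fan metric and its limit $X$ is the required measurable random variable.

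Finally I would compute the distribution. By Theorem~\ref{thm:randomvariabledistribution}, $\Pr(X\in U)=\sup_n\bigl(\P(X_n\in I_{2^{-n}}(U))-2^{-n}\bigr)$. Since a cell $B_{n,i}$ with representative $x_{n,i}\in I_{2^{-n}}(U)$ and diameter $<2^{-n}$ satisfies $B_{n,i}\subset U$, the counted cylinder-measure never exceeds $\nu(U)$ (up to the $<2^{-n}$ rounding error, absorbed by the $-2^{-n}$ term); conversely, as $n\to\infty$ the cells lying inside $U$ with representatives in $I_{2^{-n}}(U)$ exhaust $U$ up to a $\nu$-null set because $I_\varepsilon(U)\nearrow U$, so the supremum equals $\nu(U)$ exactly. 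The \textbf{main obstacle} is the arithmetic-coding step: the cell measures $\nu(B_{m,i})$ are in general irrational while every clopen subset of $\{0,1\}^\omega$ has dyadic measure, so no finite stage can match the distribution exactly --- one must control the dyadic rounding error so that the sequence stays effectively Cauchy and, crucially, so that the \emph{limit} distribution is $\nu$ on the nose rather than merely approximately. Showing that the errors telescope away in the limit, using the exactness of the cell measures and the refinement structure, is the technical heart of the argument.
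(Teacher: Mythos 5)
Your proposal is correct and takes essentially the same route as the paper: refining null-boundary partitions from the preceding decomposition lemma, assignment of dyadic-measure cylinder sets to cells, simple random variables sending cylinders to representative points, an effective Cauchy limit in the Fan metric, and Theorem~\ref{thm:randomvariabledistribution} to identify the limit distribution. The only real difference is bookkeeping: where you code via cumulative interval endpoints in $[0,1]$ and round two-sidedly to a dyadic grid, the paper maintains one-sided dyadic \emph{under}-approximations $P(W_{n,m})<\nu(B_{n,m})$ with cylinders nested directly inside the parent's cylinder, so the errors never overshoot and the lower-topology supremum $\sup_n\bigl(\P(X_n\in I_{2^{-n}}(U))-2^{-n}\bigr)$ recovers $\nu(U)$ exactly in the limit --- which disposes of the ``main obstacle'' you flag without any telescoping of signed rounding errors.
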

\begin{proof}
For each $n$, construct a countable topological partition $\mathcal{B}_n$ such that each $B\in B_n$ has radius at most $2^{-n}$, and $\sum_{B\in\mathcal{B}_n}\mu(\partial B)=0$ such that $\nu(\bigcup\mathcal{B}_n\} > 1-2^{-n-1}$.
By taking intersections if necessary, we can assume that each $\mathcal{B}_{n+1}$ is a refinement of $\mathcal{B}_n$.

We now construct random variables $X_n$ as follows.
Suppose we have constructed cylinder sets $W_{n,m}\subset\{0,1\}^\infty$ such that $P(W_{n,m})<\mu(B_{n,m})$ and $\sum_{m} P(W_{n,m})>1-2^{-n}$.
Since $B_{n,m}$ is a union of open sets $\{B_{n+1,m,1},\ldots,B_{n+1,m,k}\}\subset\mathcal{B}_{n+1}$, we can effectively compute dyadic numbers $p_{n+1,m}$ such that $p_{n+1,m,k}<\mu(B_{n+1,m,k})$ and $\sum_k p_{n+1,m,k}\geq P(W_{n,m})$.
We then partition $W_{n,m}$ into cylinder sets $W_{n+1,m,k}$ each of measure $p_{n,m,k}$.
We take $X_{n+1}$ to map $W_{n+1,m,k}$ to a point $x_{n+1,m,k}\in B_{n+1,m,k}$.
It is clear that $X_{n}$ is a strongly-convergent Cauchy sequence, so converges to a random variable $X_\infty$.

It remains to show that $\Pr(X_\infty \in U) = \mu(U)$ for all $U\in\opset(X)$.
This follows since for given $n$ we have $\Pr(X_n \in U) > \mu(I_{2^{1-n}}(U))-2^{-n} \nearrow \mu(U)$ as $n\to\infty$.
\end{proof}

%\begin{conjecture}
%Let $X$ be a random variable, and $U_n$ be a decreasing sequence of open sets such that $\bigcap_{n=1}^{\infty} U_n = \emptyset$.
%Then $\Pr(X\in U_n) \to 0$ as $n\to\infty$.
%\end{conjecture}

%If $X,Y$ are piecewise-continuous random variables on the same probability space $(\Omega,P)$, and $\star$ is a computable operator, then $X \star Y$ is an piecewise-continuous random variable on $(\Omega,P)$, since $\dom(X \star Y) = \dom(X) \cap \dom(Y)$ and $\Pr(\dom(X) \cap \dom(Y)) = \Pr(\dom(X)) + \Pr(\dom(Y)) - \Pr(\dom(X) \cup \dom(Y)) = 1$.
%Indeed, since the countable intersection of full-measure subsets also has full measure, we can consider operators on countable sets of random variables.

\subsection{Product and Image}

If $X_1$, $X_2$ are continuous random variables, define the product $X_1\times X_2$ as the functional product
\begin{equation*} (X_1\times X_2)(\omega) = (X_1(\omega),X_2(\omega)) .\end{equation*}
Our second main result is that we can also compute products of measurable random variables.
\begin{theorem}[Computability of products]
\label{thm:productrandomvariable}
If $X_n$ and $Y_n$ are effective Cauchy sequences of continuous random variables, then $X_n \times Y_n$ is an effective Cauchy sequence.
\end{theorem}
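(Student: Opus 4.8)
The plan is to establish a subadditivity estimate for the Fan metric under products and then read off the Cauchy property from the two component sequences. Throughout I take the metric on the product $\tpX\times\tpY$ to be any of the standard product metrics, e.g. $d\bigl((x_1,y_1),(x_2,y_2)\bigr)=\max\bigl(d_\tpX(x_1,x_2),d_\tpY(y_1,y_2)\bigr)$ or the sum of the component metrics; all that is needed is that this metric is dominated by the sum of the two component distances, which is what drives the single set inclusion below.

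The key step is the inequality
\[ d(X_m\times Y_m,\,X_n\times Y_n)\ \le\ d(X_m,X_n)+d(Y_m,Y_n). \]
To prove it I would fix rationals $a>d(X_m,X_n)$ and $b>d(Y_m,Y_n)$ and use the infimum form of the Fan metric in~\eqref{eq:randomvariablefanmetric}, which gives $P\bigl(\{\omega\mid d(X_m(\omega),X_n(\omega))\ge a\}\bigr)<a$ and likewise $P\bigl(\{\omega\mid d(Y_m(\omega),Y_n(\omega))\ge b\}\bigr)<b$. Writing $\varepsilon=a+b$, a pointwise bound $d\bigl((X_m\times Y_m)(\omega),(X_n\times Y_n)(\omega)\bigr)\ge\varepsilon$ forces $d(X_m(\omega),X_n(\omega))\ge a$ or $d(Y_m(\omega),Y_n(\omega))\ge b$, since otherwise both component distances fall strictly below $a$ and $b$ respectively, and hence the product distance falls strictly below $a+b=\varepsilon$. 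Thus the $\varepsilon$-bad event for the product is contained in the union of the two component bad events, and a union bound yields $P\bigl(\{\omega\mid d((X_m\times Y_m)(\omega),(X_n\times Y_n)(\omega))\ge\varepsilon\}\bigr)<a+b=\varepsilon$. By the infimum characterisation this says $d(X_m\times Y_m,X_n\times Y_n)\le\varepsilon$, and letting $a\downarrow d(X_m,X_n)$ and $b\downarrow d(Y_m,Y_n)$ gives the claimed inequality.

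With the inequality in hand the conclusion is immediate. If $(X_n)$ and $(Y_n)$ are effective Cauchy sequences, then $d(X_m,X_n)$ and $d(Y_m,Y_n)$ are each bounded by terms of a known computable null sequence; the subadditivity estimate then bounds $d(X_m\times Y_m,X_n\times Y_n)$ by the sum of these, which is again a known computable sequence tending to $0$. Hence $(X_n\times Y_n)$ is an effective Cauchy sequence (and if a strong Cauchy sequence is wanted one simply passes to a computable subsequence). Since the product operation on continuous random variables is just functional pairing, computable by Cartesian closure, and the Fan metric is itself computable, this passage is fully effective, so the product extends computably to the completion, i.e.\ to measurable random variables.

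The main obstacle, and the only genuinely nontrivial point, is the subadditivity estimate itself: the Fan metric is defined through a self-referential ``the measure of the $\varepsilon$-bad set is below $\varepsilon$'' condition rather than by an expectation, so one cannot add distances by linearity of an integral. The device that makes the argument work is choosing the product threshold to be exactly $a+b$, which is precisely what yields the pointwise containment of bad events; the remaining effort is the routine bookkeeping of strict versus non-strict inequalities when passing to the infimum.
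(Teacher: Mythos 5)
Your proof is correct and rests on exactly the same idea as the paper's: the $\varepsilon$-bad event for the product is contained in the union of the component bad events at split thresholds, and a union bound then transfers the Cauchy moduli. The only difference is cosmetic---the paper splits $2^{-n}=2^{-(n+1)}+2^{-(n+1)}$ with an index shift to keep the strong modulus $2^{-n}$, whereas you prove the general subadditivity $d(X_m\times Y_m,X_n\times Y_n)\le d(X_m,X_n)+d(Y_m,Y_n)$ and pass to a subsequence if a strong modulus is wanted.
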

\begin{proof}
If $m>n$, then $\P(d(X_{m+1} \!\times\! Y_{m+1},X_{n+1} \!\times\! Y_{n+1}) \geq 2^{-n}) = \P(d(X_{m+1},X_{n+1}) \geq 2^{-n} \vee d(Y_{m+1},Y_{n+1})\geq 2^{-n}) \leq \P(d(X_{m+1},X_{n+1})\geq 2^{-(n+1)})+\P(d(Y_{m+1},Y_{n+1})\geq 2^{-(n+1)}) \leq 2^{-(n+1)} + 2^{-(n+1)} = 2\cdot 2^{-(n+1)} = 2^{-n}$.
Hence $d(X_m\times Y_m,X_n\times Y_n) \leq 2^{-n}$ as required.
\end{proof}

If $X:\Omega \pfto \tpX$ is a continuous random variable, and $f:\tpX\fto \tpY$ is continuous, then $f\circ X$ is a continuous random variable.
By taking limits of effective Cauchy sequences, it is clear that we can define the image of a measurable random variable under a \emph{uniformly} continuous function.
However, it is possible to compute the image under an arbitrary continuous function.
\begin{theorem}[Continuous mapping]
\label{thm:imagerandomvariable}
If $X_n$ is an effective Cauchy sequence of continuous random variables taking values in $\tpX$ and $f:\tpX\fto\tpY$ is continuous, then $f \circ X_n$ is an effective Cauchy sequence.
Further, % if $X_\infty:=\lim_{n\to\infty}X_n$ is continuous, then so is $\lim_{n\to\infty} f(X_n)$, and
for any open $V\subset \tpY$,
$\Pr(\lim_{n\to\infty} f\circ X_n \in V) = \Pr(\lim_{n\to\infty} X_n \in f^{-1}(V))$.
\end{theorem}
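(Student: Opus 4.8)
The plan is to treat the two assertions separately: first that $f\circ X_n$ is again an effective Cauchy sequence (the computational content), and then the distributional identity (an equality of lower reals, holding by Theorem~\ref{thm:randomvariabledistribution}). For the first assertion the obstruction is that $f$ is only continuous, not uniformly so, whereas the Fan metric measures convergence in probability, and a small change of $X$ in probability can be amplified arbitrarily by a steep $f$. The remedy is compactness. Since $\Omega=\{0,1\}^\omega$ is compact, each continuous $X_n$ has compact image $X_n(\Omega)$, computable from $X_n$; moreover convergence in the Fan metric makes the family \emph{tight}. Concretely, from a late term $X_j$ and the bound $d(X_m,X_j)\le 2^{-j}$ one confines, with probability at least $1-2^{-j}$, every $X_m$ ($m\ge j$) to the closed $2^{-j}$-neighbourhood of $X_j(\Omega)$, which is again compact; adjoining the finitely many earlier images yields a single computable compact $K$ capturing all but $\delta$ of the mass of every $X_n$. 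I would then compute, from the effective uniform continuity of $f$ on an enlargement of $K$, a modulus $\eta$ with $d(fx,fx')<\delta$ whenever $x,x'\in K$ and $d(x,x')<\eta$. For $\min(m,n)$ large enough that $2^{-\min(m,n)}<\min(\eta,\delta)$, the event $d(fX_m,fX_n)\ge\delta$ lies inside $\{X_m\notin K\}\cup\{X_n\notin K\}\cup\{d(X_m,X_n)\ge\eta\}$, of probability at most $3\delta$, so the Fan distance $d(fX_m,fX_n)$ is at most $3\delta$; tracking these accuracies against the target gives a computable modulus, and $(f\circ X_n)$ converges to a measurable random variable $Y_\infty$.

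For the identity, write $U=f^{-1}(V)$, $X_\infty=\lim_n X_n$ and $Y_\infty=\lim_n f\circ X_n$. The only exact fact available is that for a \emph{continuous} random variable $\{f X_n\in W\}=\{X_n\in f^{-1}(W)\}$ as events, hence $\P(f X_n\in W)=\P(X_n\in f^{-1}(W))$ for every open $W$. To exploit this I would first decouple the erosion level from the approximation index: since by Theorem~\ref{thm:randomvariabledistribution} both distributions are valuations, continuity from below (and $I_\varepsilon(W)\nearrow W$ as $\varepsilon\to0$) gives $\Pr(X_\infty\in U)=\sup_{\sigma>0}\Pr(X_\infty\in I_\sigma(U))$ and $\Pr(Y_\infty\in V)=\sup_{\tau>0}\Pr(Y_\infty\in I_\tau(V))$. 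This reduces the claim to the two inequalities $\Pr(Y_\infty\in I_\tau(V))\le\Pr(X_\infty\in U)$ and $\Pr(X_\infty\in I_\sigma(U))\le\Pr(Y_\infty\in V)$ for fixed $\tau,\sigma>0$.

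I would prove the first inequality; the second is symmetric, with the roles of $f$ and $f^{-1}$, and of images and preimages of compact and closed sets, interchanged. Fix $\tau>0$ and $\delta>0$ and let $K$ be the tight compact set above with $\P(X_n\notin K)\le\delta$ for all $n$. The open set $f^{-1}(I_\tau(V))$ has closure inside $U$ (it is contained in $f^{-1}$ of the closed set $\overline{I_\tau(V)}\subset V$), so its intersection with $K$ has compact closure contained in $U$, which therefore has a \emph{fixed} positive distance $2\rho$ to $\tpX\setminus U$; hence $f^{-1}(I_\tau(V))\cap K\subset I_\rho(U)$. Using the event identity and discarding the mass outside $K$,
\[ \P(fX_n\in I_\tau(V))=\P(X_n\in f^{-1}(I_\tau(V)))\le \P(X_n\in I_\rho(U))+\delta . \]
Because $\rho$ is now fixed, for all large $n$ (with $2^{-n}<\rho$) the continuous $X_n$, satisfying $d(X_n,X_\infty)\le 2^{-n}$, together with $W=I_\rho(U)\subset I_\varepsilon(U)$ witness the defining supremum for $\Pr(X_\infty\in U)$ at levels $\varepsilon$ decreasing to $2^{-n}$, giving $\P(X_n\in I_\rho(U))\le\Pr(X_\infty\in U)+2^{-n}$. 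Meanwhile Theorem~\ref{thm:randomvariabledistribution} expresses $\Pr(Y_\infty\in I_\tau(V))$ as $\lim_n[\P(fX_n\in I_{2^{-n}}(I_\tau(V)))-2^{-n}]\le\limsup_n\P(fX_n\in I_\tau(V))$. Chaining these and letting $n\to\infty$ (legitimate since $\rho$ is fixed), then $\delta\to0$, yields $\Pr(Y_\infty\in I_\tau(V))\le\Pr(X_\infty\in U)$; taking the supremum over $\tau$ finishes the direction.

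The decisive difficulty throughout is the \emph{distortion of distances} by a merely continuous $f$: neither erosion transports cleanly across $f$, and at a common index the Fan radius $2^{-n}$ may exceed the available geometric margin. The three nested limits — $n\to\infty$ to kill $2^{-n}$, $\delta\to0$ to shed the tight tail, and $\sigma,\tau\to0$ last — are arranged precisely so that the margin $\rho$ (and its $\tpY$-analogue) stays \emph{fixed and positive} while $n$ grows, which is what I expect to be the main obstacle to get right. I would also emphasise why no softer argument works: the maps $X\mapsto\Pr(X\in\cdot)$ land in the non-Hausdorff lower reals (indeed $\Pr(X\in U)$ is only computable in $\tpIv_<$), so agreement of the two composites on the dense set of continuous random variables does \emph{not} extend to the completion by continuity alone, and a genuine two-sided quantitative estimate is unavoidable.
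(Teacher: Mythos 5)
There is a genuine gap in your first step, and it sits exactly where the theorem is needed most. You build the tight compact set $K$ by taking ``the closed $2^{-j}$-neighbourhood of $X_j(\Omega)$, which is again compact'' --- but closed $\varepsilon$-neighbourhoods of compact sets are compact only in locally compact (Heine--Borel-type) spaces, not in a general computable metric space $\tpX$. In $\ell^2$, or in $C([0,1];\R)$ --- which the paper itself points out is a Polish space that is \emph{not} locally compact, and which is precisely the value space in the Wiener-process and \Ito-integral sections --- the closed $2^{-j}$-neighbourhood of even a single point fails to be compact. Consequently your $K$ need not be compact, the ``effective uniform continuity of $f$ on an enlargement of $K$'' (an enlargement is again a neighbourhood, so the same objection applies twice) does not exist, and the modulus $\eta$ on which the whole Cauchy estimate $d(fX_m,fX_n)\leq 3\delta$ rests is unavailable. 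The strategy is salvageable: a sequence Cauchy in the Fan metric is uniformly tight, and one can effectively build $K$ as a closed \emph{totally bounded} set, $K=\bigcap_j \bigcup_{i\leq N_j}\clB(c_i,2^{-j})$, by searching (using effective separability and lower-computability of $\Pr(X\in\cdot)$ on open sets) for finite unions of balls at scale $2^{-j}$ carrying mass $>1-\delta\,2^{-j}$, with a union bound over $j$; but this is a nontrivial extra lemma that your proposal asserts rather than proves.

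It is worth contrasting with the paper's proof, which avoids compactness and tightness altogether. Following Mann--Wald, it introduces the open sets $B_{\delta,\epsilon}(f)=\{x \mid \exists y,\ d(x,y)<\delta \wedge d(f(x),f(y))>\epsilon\}$, notes $\bigcap_{\delta>0}B_{\delta,\epsilon}(f)=\emptyset$ by continuity of $f$, and then uses continuity of the \emph{limit distribution} on decreasing open sets (Lemma~\ref{lem:decreasingopen}) to find, effectively via Theorem~\ref{thm:randomvariabledistribution}, a $\delta$ with $\Pr(X\in B_{\delta,\epsilon/2}(f))<\epsilon/2$; the pointwise implication $d(f(x),f(y))>\epsilon \Rightarrow d(x,z)\geq\delta \vee d(y,z)\geq\delta \vee z\in B_{\delta,\epsilon/2}(f)$ then yields the Cauchy estimate in one line, valid for arbitrary separable $\tpX$. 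Your second half (the distributional identity via erosions $I_\tau(V)$, the inclusion $\overline{f^{-1}(I_\tau(V))}\subset U$, the fixed margin $\rho$, and the three-limit bookkeeping) is essentially sound and close in spirit to the paper's argument --- and your closing remark about why density plus $\tpIv_<$-valued continuity cannot replace a two-sided estimate is correct --- but as written it too invokes the defective $K$, so it inherits the same gap until $K$ is constructed by the totally-bounded route.
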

\noindent
The proof is based on the non-effective version of this result from~\cite{MannWald1943}.
%See also Wikipedia "Continuous mapping theorem"
\begin{proof}
Consider the open subsets of $\tpX$ defined as
\[ B_{\delta,\epsilon}(f) = \big\{x\in \tpX\ \mid\exists y\in \tpX,\ d(x,y)<\delta \wedge d(f(x),f(y))>\epsilon\big\}. \]
Since $f$ is continuous, for all $\epsilon>0$, $\bigcap_{\delta>0} B_{\delta,\epsilon}(f)=\emptyset$.

For all $x,y,z\in \tpX$,
\[ \begin{aligned}  d(f(x),f(y))>\epsilon &\implies d(f(x),f(z))>\epsilon/2 \,\vee\, d(f(y),f(z))>\epsilon/2 \\ & \implies d(x,z)\geq \delta \vee d(y,z)\geq \delta \vee z\in B_{\delta,\epsilon/2}(f)  . \end{aligned}\]
Hence for random variables $X,Y,Z$,
\[\Pr\big(d(f(X),f(Y))>\epsilon\big) \leq \Pr\big(d(X,Z)>\delta\big) + \Pr\big(d(Y,Z)>\delta\big) + \Pr(Z\in B_{\delta,\epsilon/2}(f)). \]

Let $X_n$ be an effective Cauchy sequence of continuous random variables with limit $X$.
Fix $\epsilon>0$.
By continuity of the discribution of $X$, we have $\Pr(X\in B_{\delta,\epsilon/2})<\epsilon/2$ for sufficiently small $\delta$, and by computability of the distribution, we can effectively find such a $\delta$.
Take $N(\epsilon)$ so that $2^{-N(\epsilon)}<\min\{\delta,\epsilon/4\}$.
Then for $n\geq N(\epsilon)$, we have $\Pr\big(d(X,X_n)>\delta)<\Pr\big(d(X,X_n)>2^{-n})<2^{-n}<\epsilon/4$.
Therefore if $m,n\geq N(\epsilon)$, we have $\Pr\big(d(f(X_m),f(X_n))>\epsilon\big) \leq \Pr\big(d(X_m,X)>\delta\big) + \Pr\big(d(X_n,X)>\delta\big) + \Pr(X\in B_{\delta,\epsilon/2}(f)) < \epsilon$.
Thus $f(X_n)$ is an effective Cauchy sequence, satisfying $d(f(X_n),f(X_m))<\epsilon$ whenever $m,n\geq N(\epsilon)$.

Fix $\epsilon>0$, and choose $\delta$ such that $\Pr(f(X)\in V) < \Pr(f(X)\in I_{\delta}(V)) + \epsilon/2$.
For $n$ sufficiently large, we have $d(f(X_n),f(X))<\min\{\delta,\epsilon/2\}$.
Then $\Pr(f(X)\in V) \geq \P(f(X_n)\in I_\epsilon(V))-\epsilon = \P(X_n\in f^{-1}(I_\epsilon(V)))-\epsilon$.
Taking $n\to\infty$ gives $\Pr(f(X)\in V) \geq \Pr(X\in f^{-1}(I_\epsilon(V)))-\epsilon$, and taking $\epsilon\to0$ given $f^{-1}(I_\epsilon(V))\to f^{-1}(V)$, so $\Pr(f(X)\in V) \geq \Pr(X\in f^{-1}(V))$.
For the reverse inequality, for $n$ sufficiently large, we have $d(f(X_n),f(X))<\min\{\delta,\epsilon/2\}$, so $\Pr(f(X_n)\in V) \geq \Pr(f(X)\in I_{\delta}(V)) - \epsilon/2$.
Then $\Pr(f(X)\in V) < \Pr(f(X_n)\in V) + \epsilon = \P(X_n \in f^{-1}(V))+ \epsilon$.
Taking $n\to\infty$ gives $\Pr(f(X)\in V) \leq \Pr(X \in f^{-1}(V)) + \epsilon$, and since $\epsilon$ is arbitrary, $\Pr(f(X)\in V) \leq \Pr(X \in f^{-1}(V))$.
\end{proof}

If $X$ is a measurable $\tpX$-valued random variable and $f:\tpX\fto \tpY$ where $\tpY$ is not a metric space, then $f(X)$ is not defined as a random variable, since we have only a notion of random variables on metric spaces.
However, the distribution of $f(X)$ \emph{is} well-defined and computable, with $\Pr(f(X)\in V) := \Pr\bigl(X\in f^{-1}(V)\bigr)$ for $V\in\opset(V)$.
Indeed, we can compute the joint distribution of $f(X)$ and $Z\in\tpRV(\tpZ)$ by
$\Pr(f(X)\in V \wedge Z \in W) := \Pr\bigl(X\in f^{-1}(V) \wedge Z\in W\bigr)$ for $V\in\opset(\tpY)$ and $W\in\opset(\tpZ)$.

%\subsection{Construction}

\subsection{Expectation}

The expectation of a random variable is not continuous in the weak topology; for example, we can define continuous random variables $X_n$ taking value $2^n$ on a subset of $\Omega$ of measure $2^{-n}$, so that $X_n\to0$ but $\Ex(X_n)=1$ for all $n$.
For this reason, we need a new type of \emph{integrable random variables}.
\begin{definition}[Integrable random variable]
Let $(\tpX,d)$ be a metric space.
The type of \emph{integrable random variables} is the effective completion of the type of continuous random variables under the metric
\begin{equation} \label{eq:randomvariableuniformmetric} d_1(X,Y) = \int_\Omega d(X(\omega),Y(\omega)) dP(\omega) . \end{equation}
\end{definition}
\noindent
If $d$ is a bounded metric, then this metric is equivalent to the Fan metric.

For integrable random variables taking values in the reals, the expectation is defined in the usual way:
\begin{definition}[Expectation]
If $X:\Omega \fto \tpRe$ is a continuous real-valued random variable, the \emph{expectation} of $X$ is given by the integral
\[ \textstyle \Ex(X) = \int_{\Omega} X(\omega) \, dP(\omega) , \]
which always exists since $X$ has compact values.
\par 
If $X:\Omega \mfto \tpRe$ is an integrable real-valued random variable, and $X$ is presented as $\lim_{n\to\infty}X_n$ for some sequence of random variables satisfying $\Ex[|X_{n_1}-X_{n_2}|]\leq 2^{-\min(n_1,n_2)}$, define
\[ \textstyle \Ex(X) = \lim_{n\to\infty} \Ex(X_n), \]
which is an effective Cauchy sequence since $|\Ex(X_{n_1})-\Ex(X_{n_2})| \leq \Ex(|X_{n_1}-X_{n_2}|)$.
\end{definition}
\noindent

We can effectivise Lesbegue spaces $\mathcal{L}^p(\tpX)$ of integrable random variables through the use of effective Cauchy sequences in the natural way:
If $(\tpX,|\cdot|)$ is a normed space, then the type of $p$-integrable random variables with values in $\tpX$ is the effective completion of the type of $p$-integrable continuous random variables under the metric $d_{p}(X,Y)=||X-Y||_p$ induced by the norm
\begin{equation} \label{eq:randomvariableuniformnorm} \textstyle ||X||_{p} = \left( \strut \int_{\Omega} | X(\omega)|^p\,dP(\omega) \right)^{1/p} = \bigl(\Ex(|X|^p)\bigr)^{1/p}. \end{equation}
We can easily prove the Cauchy-Schwarz and triangle inequalities for measurable random variables
\( ||XY||_{pq/(p+q)} \leq ||X||_{p}  \cdot ||Y||_{q} \quad \text{and} \quad ||X+Y||_{p} \leq ||X||_{p}+||Y||_{p} \,. \)

%
%If $X$ is a real-valued random variable, then the \emph{cumulative distribution} of $X$ is the lower-semicontinuous function $f_X:\R \fto [0,1]$ given by
%\( f_X (x) = \Pr(X \in (x,\infty)) . \)

%
%The conditional distribution of $X$ given $Y$ is
%\[ \Pr(X\in U \mid Y \in  V) = \frac{\Pr( X^{-1}(U) \cap Y^{-1}(V) )}{\Pr( Y^{-1}(V) )} . \]

%Random variables $X_1$ and $X_2$ are \emph{independent} if for all open sets $U_1$, $U_2$, $\Pr(X_1\in U_1 \wedge X_2\in U_2) = \Pr(X_1\in U_1) \cdot \Pr(X_2\in U_2)$.
%If $X_1$ is a random variable on $(\Omega_1\times \Omega_2)$, we say $X_1$ is independent of $\Omega_2$ if $X_1$ is independent of the random variable $X_2$ defined by $X_2(\omega_1,\omega_2)=\omega_2$.
%In particular, if $X_1$ is a random variable on $(\Omega_1\times \Omega_2)$ and we can write $X_1(\omega_1,\omega_2)=X_1(\omega_1,\omega_2')$ for all $\omega_2,\omega_2'$, then $X_1$ is independent of $\Omega_2$.

\begin{theorem}[Expectation]
\label{thm:expectation}
Let $X$ be a positive real-valued random variable such that $\Ex(X)<\infty$.
Then
\begin{equation*} \textstyle \Ex(X) = \int_{0}^{\infty} \Pr(X>x) dx = \int_{0}^{\infty} \Pr(X \geq  x) dx . \end{equation*}
\end{theorem}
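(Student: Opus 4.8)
The plan is to reduce the identity to the observation that the lower horizontal integral is, by construction, an integral of tail valuations over the value axis. The one fact I would isolate first is that for any valuation $\nu$ and any positive lower-semicontinuous $\psi:\tpX\fto\tpHl$,
\[ \int_\tpX \psi\,d\nu \;=\; \int_0^\infty \nu\bigl(\psi^{-1}(t,\infty]\bigr)\,dt . \]
This is immediate from equation~\eqref{eq:lowerintegral}: each admissible sum $\sum_{m}(p_m-p_{m-1})\,\nu(\psi^{-1}(p_m,\infty])$ is precisely a lower Darboux sum for the bounded, monotonically decreasing function $t\mapsto\nu(\psi^{-1}(t,\infty])$, and the supremum of the lower sums of a monotone function is exactly its (improper) Riemann integral. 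The hypothesis $\Ex(X)<\infty$ guarantees this integral is finite and that $\nu(\psi^{-1}(t,\infty])\to 0$, so no mass escapes to $t=\infty$.

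I would then apply this with $\tpX=\Omega$, $\nu=P$ and $\psi=X$, noting that a continuous positive random variable is lower-semicontinuous and, since $\Omega$ is compact, bounded. The definition of expectation gives $\Ex(X)=\int_\Omega X\,dP$, while Theorem~\ref{thm:randomvariabledistribution} identifies $P(X^{-1}(t,\infty])$ with $\Pr(X>t)$, so the displayed identity yields $\Ex(X)=\int_0^\infty\Pr(X>t)\,dt$ for every continuous positive $X$.

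The remaining work is to pass to an arbitrary integrable $X=\lim_n X_n$. Here $\Ex(X_n)\to\Ex(X)$ by definition, and $\Ex(X_n)=\int_0^\infty\Pr(X_n>t)\,dt$ by the previous paragraph, so it suffices to show the tail integral is continuous along the sequence. Convergence in the Fan (or $d_1$) metric forces convergence in distribution, hence $\Pr(X_n>t)\to\Pr(X>t)$ at every continuity point $t$ of the distribution function, i.e. for all but countably many $t$. Fatou's lemma then gives $\int_0^\infty\Pr(X>t)\,dt\le\Ex(X)$, and the reverse inequality follows from the uniform integrability that an $L^1$-convergent sequence automatically enjoys, via Vitali's theorem. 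I expect this limiting step to be the main obstacle: the tail integral is not weakly continuous without an integrability bound, so the argument must extract uniform integrability from the completion structure rather than from pointwise data.

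Finally, for the second equality I would compare the two integrands pointwise: $\Pr(X\ge t)-\Pr(X>t)=\Pr(X=t)\ge 0$, and the atom set $\{t:\Pr(X=t)>0\}$ is countable, since at most $n$ atoms can have mass exceeding $1/n$. Thus $\Pr(X>t)=\Pr(X\ge t)$ for Lebesgue-almost every $t$, and the two improper integrals over $[0,\infty)$ coincide.
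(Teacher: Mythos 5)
Your proof is correct, and at each of the three stages it packages the argument somewhat differently from the paper. For the continuous case, the paper runs two explicit $\epsilon$-chains over partitions (shifting $X$ by $\epsilon$ for one inequality, refining a near-optimal partition for the other); you instead observe that every admissible sum in~\eqref{eq:lowerintegral} is exactly a lower Darboux sum of the decreasing tail function $t\mapsto \nu(\psi^{-1}(t,\infty])$, so the supremum is the (improper) Riemann integral outright. This is the same underlying computation, but isolating it as a lemma valid for any valuation and any lower-semicontinuous $\psi$ is a genuine gain in clarity; one small omission is that to apply it to the approximants $X_n$ of a measurable $X$ you should first replace them by $\max(X_n,0)$, which stays continuous and only improves the $d_1$-distance to $X$. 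For the limiting step, the paper says only ``follows by taking limits,'' and you fill that gap honestly; however, your route through convergence in distribution, Fatou, and Vitali/uniform integrability is heavier than needed and classically nonconstructive, whereas the one-line estimate $\int_0^\infty\lvert\Pr(X_m>t)-\Pr(X_n>t)\rvert\,dt \le \Ex\lvert X_m-X_n\rvert = d_1(X_m,X_n)$ (by Tonelli, since $\int_0^\infty \lvert I[X_m>t]-I[X_n>t]\rvert\,dt=\lvert X_m-X_n\rvert$ for positive variables) shows the tail integral is $1$-Lipschitz in the metric~\eqref{eq:randomvariableuniformmetric} and hence passes effectively to the completion --- an argument more in the effective spirit of the paper and closer to what its hand-wave must mean. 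Finally, for $\Pr(X>x)$ versus $\Pr(X\geq x)$, the paper sandwiches via the shift $X+\epsilon$, while you integrate over the co-countable set of non-atoms; both are valid classically, but the shift argument again avoids the nonconstructive appeal to countability of the atom set, which matters in this paper's setting even though the identity itself is only asserted axiomatically.
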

\noindent
Note that the first integral is computable in $\tpR^+_<$, but the second integral is in general uncomputable in $\tpR^+_>$, due to the need to take the limit as the upper bound of the integral goes to infinity.
However, the second integral may be computable if the tail is bounded, for example, if $X$ takes bounded values.
The proof follows from the definition of the lower integral:
\begin{proof}
First assume $X$ is a continuous random variable, so by definition, $\Ex(X) = \int_\Omega X(\omega)\,dP(\omega)$.

The definition of the lower horizontal integral gives
$\int_\Omega X(\omega)\,dP(\omega) \geq \sum_{i=0}^{n-1} (x_i-x_{i-1}) P(\{\omega\mid X(\omega)>x_i\})$
for all values $0=x_0<x_1<\cdots<x_n$.
Take $x_{i}-x_{i-1}<\epsilon$ for all $i$.
Then $\Ex(X)+\epsilon = \int_\Omega X(\omega)+\epsilon\,dP(\omega)  \geq \sum_{i=1}^{n} (x_i-x_{i-1}) P(\{\omega\mid X(\omega) +\epsilon > x_i\}) = \sum_{i=1}^{n} \int_{x_{i-1}}^{x_i} \Pr(X(\omega) > x_i-\epsilon) \,dx  \geq \sum_{i=1}^{n} \int_{x_{i-1}}^{x_i} \Pr(X(\omega) > x_{i-1}) \,dx \geq \sum_{i=1}^{n} \int_{x_{i-1}}^{x_i} \Pr(X(\omega) > x) \, dx = \int_{0}^{x_{n-1}} \Pr(X(\omega) > x)$.
Taking $n\to\infty$ gives $\Ex(X) \geq \int_{0}^{\infty}\Pr(X>x)dx - \epsilon$, and since $\epsilon$ is arbitrary, $\Ex(X) \geq \int_{0}^{\infty} \Pr(X > x) dx$.

The definition of the lower horizontal integral gives for all $\epsilon>0$, there exist $0=x_0<x_1<\cdots<x_n$, such that $\int_\Omega X(\omega)\,dP(\omega) \leq \sum_{i=1}^{n} (x_i-x_{i-1}) P(\{\omega\mid X(\omega)>x_i\})+\epsilon$.
By refining the partition if necessary, we can assume $x_i-x_{i-1}<\epsilon$ for all $i$. 
Then $\Ex(X)-\epsilon \leq \sum_{i=1}^{n} (x_i-x_{i-1}) P(\{\omega\mid X(\omega)>x_{i}\}) = \sum_{i=1}^{n} \int_{x_{i-1}}^{x_{i}} P(\{\omega\mid X(\omega)>x_{i}\} dx \leq \sum_{i=1}^{n} \int_{x_{i-1}}^{x_{i}} P(\{\omega\mid X(\omega)>x\} dx=\int_{0}^{x_n} P(\{\omega\mid X(\omega)>x\} dx\leq\int_{0}^{\infty} P(\{\omega\mid X(\omega)>x\} dx$.
Hence $\Ex(X) \leq \int_{0}^{\infty}\Pr(X>x)dx + \epsilon$, and since $\epsilon$ is arbitrary, $\Ex(X) \leq \int_{0}^{\infty} \Pr(X > x) dx$.

The case of measurable random variables follows by taking limits.

We show $\int_{0}^{\infty}\Pr(X\geq x)dx = \Ex(X)$ since  $\int_{0}^{\infty}\Pr(X > x)dx \leq \int_{0}^{\infty}\Pr(X\geq x)dx \leq \int_{0}^{\infty}\Pr(X + \epsilon > x)dx = \epsilon + \int_{0}^{\infty}\Pr(X  \geq x)dx$  for any $\epsilon>0$.
\end{proof}
\noindent
By changing variables in the integral, we obtain:
\begin{corollary}
If $X$ is a real-valued random variable, then for any $\alpha\geq 1$,
\[ \textstyle \Ex(|X|^\alpha) = \int_{0}^{\infty} \alpha \, x^{\alpha-1} \, \Pr(X>x) dx = \int_{0}^{\infty} \alpha x^{\alpha-1} \Pr(X \geq  x) dx . \]
\end{corollary}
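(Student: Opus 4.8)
The plan is to reduce the statement to the expectation formula of Theorem~\ref{thm:expectation} applied to the random variable $|X|^\alpha$, followed by a one-dimensional change of variables. First I would set $Y=|X|^\alpha$; since $t\mapsto|t|^\alpha$ is continuous for $\alpha\geq 1$, the Continuous Mapping Theorem~\ref{thm:imagerandomvariable} guarantees that $Y$ is a well-defined positive real-valued random variable whose distribution is computable, with $\Pr(Y>y)=\Pr(|X|^\alpha>y)=\Pr(|X|>y^{1/\alpha})$ obtained from $f^{-1}((y,\infty))=\{t\mid|t|>y^{1/\alpha}\}$ for $f(t)=|t|^\alpha$. Applying Theorem~\ref{thm:expectation} to $Y$ (assuming $\Ex(Y)<\infty$, or else both sides are $+\infty$ in $\tpHl$) gives $\Ex(|X|^\alpha)=\int_0^\infty\Pr(|X|^\alpha>y)\,dy=\int_0^\infty\Pr(|X|^\alpha\geq y)\,dy$.

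Next I would perform the substitution $y=x^\alpha$. Because $\alpha\geq 1>0$, the map $x\mapsto x^\alpha$ is a $C^1$ strictly increasing bijection of $[0,\infty)$ onto itself, with $dy=\alpha x^{\alpha-1}\,dx$, carrying the endpoints $0$ and $\infty$ to $0$ and $\infty$. The key pointwise identity is that for $x\geq 0$ one has $|X|^\alpha>x^\alpha\iff|X|>x$, by strict monotonicity of $t\mapsto t^\alpha$ on $[0,\infty)$; hence $\Pr(|X|^\alpha>x^\alpha)=\Pr(|X|>x)$, and likewise with $\geq$ in place of $>$. Substituting into the two integrals yields $\Ex(|X|^\alpha)=\int_0^\infty\alpha x^{\alpha-1}\Pr(|X|>x)\,dx=\int_0^\infty\alpha x^{\alpha-1}\Pr(|X|\geq x)\,dx$, which is the claim (the statement writes $\Pr(X>x)$ for the quantity $\Pr(|X|>x)$).

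The change of variables itself is the only step requiring care. Since $x\mapsto\Pr(Y>x^\alpha)$ is a non-increasing function of $x$, both improper integrals are suprema of Riemann integrals over bounded intervals, and for such a monotone integrand the substitution by a $C^1$ increasing bijection is rigorous; no appeal to absolute continuity of the integrand is needed. I expect the main (minor) obstacle to be bookkeeping on the computability side: the first integral remains computable in $\tpR^+_<$ because $\Pr(|X|>x)$ is computable in the lower reals and the positive weight $\alpha x^{\alpha-1}$ preserves lower-computability, whereas the second integral is in general only defined, not computable in $\tpR^+_>$, exactly as remarked after Theorem~\ref{thm:expectation}.
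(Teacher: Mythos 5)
Your proposal is correct and is essentially the paper's own argument: the paper derives the corollary precisely ``by changing variables in the integral'' of Theorem~\ref{thm:expectation} applied to $|X|^\alpha$, which is exactly your substitution $y=x^\alpha$ combined with the monotonicity identity $\Pr(|X|^\alpha>x^\alpha)=\Pr(|X|>x)$. Your added care about the rigor of the substitution for monotone integrands and the lower/upper computability bookkeeping only makes explicit what the paper leaves implicit.
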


\begin{remark}[Expectation of a distribution]
Theorem~\ref{thm:expectation} shows that the expectation of a random variable depends only on its \emph{distribution}.
Indeed, we can define the expectation of a probability \emph{valuation} $\pi$ on $[0,\infty[$ by
\[ \textstyle \Ex(\pi) = \int_{0}^{\infty} \pi(\,]x,\infty[\,) dx  = \int_{0}^{\infty} \pi(\,[x,\infty[\,) dx  . \]
\end{remark}
\noindent
If $f:\tpX\fto\tpRe^+_<$, then we can compute the \emph{lower expectation} of $f(X)$ by
\begin{equation}\label{eq:lowerexpectation} \textstyle \Ex_<(f(X)) := \int_{0}^{\infty} \Pr\bigl(X\in f^{-1}(\,]\lambda,\infty[\,)\bigr) d\lambda . \end{equation}

%Unfortunately, for $f:\tpX\fto\tpRe^+_>$, the corresponding \emph{upper expectation} of $f(X)$ cannot in general be computed by
%\[ \textstyle \Ex_>(f(X)) := \int_{0}^{\infty} \Pr\bigl(X\in f^{-1}([\lambda,\infty[)\bigr) d\lambda  \]
%since taking the upper limit in the integral is not continuous into $\tpRe^+_>$.
%However, if $f$ is effectively bounded, then the integral can be computed.
%\marginpar{Where should this go?}

We have an effective version of the classical dominated convergence theorem.
\begin{theorem}[Dominated convergence]
Suppose $X_n\to X$ weakly, and there is an integrable function $Y:\Omega \fto \tpRe$ such that $|X_n| \leq Y$ for all $n$ (i.e. $\Pr(Y-|X_n|\geq0)=1$) and that $\Ex|Y|<\infty$.
Then $X_n$ converges effectively under the metric~\eqref{eq:randomvariableuniformmetric}.
In particular, the limit of $\Ex(X_n)$ always exists
\end{theorem}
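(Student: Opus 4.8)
The plan is to reduce the theorem to the classical $\epsilon/3$-splitting argument for dominated convergence under convergence in probability, and then verify that each of the three choices involved can be carried out effectively, yielding a computable modulus for the $d_1$-Cauchy property of $(X_n)$.

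First I would record the pointwise domination $|X_m-X_n|\le |X_m|+|X_n|\le 2Y$ almost surely, so that each $X_n$ is $d_1$-integrable and $d_1(X_m,X_n)=\Ex|X_m-X_n|$ is finite. The basic estimate I would establish is, for any threshold $\epsilon>0$ and truncation level $M>0$,
\[ \Ex|X_m-X_n| \le \epsilon + 2M\,\Pr(|X_m-X_n|>\epsilon) + 2\,\Ex[(Y-M)^+], \]
obtained by splitting on the event $\{|X_m-X_n|>\epsilon\}$ and bounding $|X_m-X_n|\le 2Y \le 2M + 2(Y-M)^+$ there. This is a purely classical inequality; only its three summands must be controlled effectively.

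The effective content lies in the choice of $M$ and of the convergence index. The term $\Ex[(Y-M)^+]$ is the expectation of the integrable random variable $(Y-M)^+$, obtained from $Y$ by the $1$-Lipschitz (hence $d_1$-computable) map $y\mapsto (y-M)^+$; since $Y$ is integrable, $\Ex[(Y-M)^+]$ is a computable real (equivalently $\Ex Y-\Ex\min(Y,M)$, or $\int_M^\infty \Pr(Y>t)\,dt$) that decreases to $0$ as $M\to\infty$. Hence, given a target $\eta$, I can compute $M$ with $2\,\Ex[(Y-M)^+]<\eta/3$ by evaluating this computable real for increasing $M$ until the bound is met; the search halts because the limit is genuinely $0$. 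I then fix $\epsilon=\eta/3$. For the middle term, effective weak convergence $X_n\to X$ supplies a computable Fan-metric rate, which yields effective convergence in probability at the now-fixed threshold: via $\Pr(|X_m-X_n|>\epsilon)\le \Pr(|X_m-X|>\epsilon/2)+\Pr(|X_n-X|>\epsilon/2)$ together with the implication that $d_{\mathrm{Fan}}(X_k,X)<\min(\epsilon/2,\delta)$ forces $\Pr(|X_k-X|>\epsilon/2)<\delta$, I can compute $N$ so that $2M\,\Pr(|X_m-X_n|>\epsilon)<\eta/3$ for all $m,n\ge N$. Combining the three summands gives $d_1(X_m,X_n)<\eta$ for $m,n\ge N$, with $N=N(\eta)$ computable, which is exactly the effective $d_1$-Cauchy property.

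The last assertion is then immediate: since $|\Ex(X_m)-\Ex(X_n)|\le \Ex|X_m-X_n| = d_1(X_m,X_n)$, the sequence $(\Ex X_n)$ is an effective Cauchy sequence of reals and so has a computable limit. The main obstacle is the third paragraph, namely making the absolute continuity of the integral (uniform integrability of $Y$) effective: one must certify that the truncation search for $M$ terminates and that the two parameters are assembled in the correct order — $M$ chosen first from $\eta$, then $N$ chosen from $M$ and $\eta$ — so that the amplifying factor $2M$ is beaten by the small probability of the bad event. The interpretation of ``$X_n\to X$ weakly'' as effective (computable-rate) convergence in probability is essential here; without a computable Fan-metric rate no computable $d_1$-modulus could exist.
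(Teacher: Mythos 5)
Your proof is correct, and at heart it is the same uniform-integrability splitting the paper itself uses: bound $\Ex|X_m-X_n|$ by a small term on the event $\{|X_m-X_n|\le\epsilon\}$ plus the contribution of $2Y$ on the complementary bad event, whose probability is controlled by the effective Fan-metric rate of the given sequence. The difference lies in how the tail of $Y$ is made effective. The paper computes the quantile $b(\epsilon)=\sup\{y\mid \Pr(Y\ge y)\ge\epsilon\}$ in $\R_>$ and bounds $\int_{A_\epsilon}2Y\,dP$ by the tail integral $2\int_{b(\epsilon)}^{\infty}\Pr(Y\ge y)\,dy = 2\bigl(\Ex(Y)-\int_0^{b(\epsilon)}\Pr(Y> y)\,dy\bigr)$, computable in $\R^+_>$, so a single parameter $\epsilon$ simultaneously drives the probability of the bad event and the tail bound. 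You instead truncate at a level $M$ found by unbounded search on the computable real $\Ex[(Y-M)^+]$ (legitimate, since the test $\Ex[(Y-M)^+]<\eta/6$ is semi-decidable and eventually fires), and you keep the cross term $2M\,\Pr(|X_m-X_n|>\epsilon)$ explicit, fixing $M$ first and then computing $N$ so that the amplifying factor $2M$ is beaten. Your bookkeeping is in fact slightly more careful than the paper's: the displayed inequality $\sup\{\int_A Y\,dP \mid P(A)\le\epsilon\}\le\int_{b(\epsilon)}^{\infty}\Pr(Y\ge y)\,dy$ in the paper silently drops a contribution of size up to $\epsilon\, b(\epsilon)$ (take $Y$ constant: the right-hand side is $0$ while the left is $\epsilon Y$), which requires an extra estimate such as $\epsilon\,b(\epsilon)\le 2\int_{b(\epsilon)/2}^{\infty}\Pr(Y\ge y)\,dy$ to repair; your three-term decomposition avoids this issue entirely because the $2M\,\Pr(\cdot)$ term carries exactly that contribution and is explicitly suppressed by the choice of $N$. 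Both routes read ``weak convergence'' as an effective Fan-metric rate, which, as you correctly observe, is indispensable for any computable $d_1$-modulus to exist.
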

\begin{proof}
Since $\Ex(Y) < \infty$, the probabilities $\Pr(Y \geq y) \to 0$ as $y\to\infty$. 
For fixed $\epsilon>0$, let $b(\epsilon)=\sup\{ y\mid \Pr(Y\geq y) \geq \epsilon$, which is computable in $\R_>$ given $\epsilon$.
Then $\sup\{ \int_A Y dP \mid P(A)\leq \epsilon \} \leq \int_{b(\epsilon)}^{\infty} \Pr(Y\geq y) dy=\Ex(Y)-\int_{0}^{b(\epsilon)} \Pr(Y > y) dy$ in $\R^+_>$.
For continuous random variables $X_m$, $X_n$ with $2^{-m},2^{-n}<\epsilon$, taking $A_\epsilon=\{\omega\mid d(X_m(\omega),X_n(\omega))\geq \epsilon\}$ gives 
\( \Ex(|X_m-X_n|) \leq \epsilon + \int_{A_\epsilon}|X_m(\omega)-X_n(\omega)|\,dP(\omega) \leq \epsilon + \int_{A_\epsilon}|X_m(\omega)|+|X_n(\omega)|dP(\omega)  \leq \epsilon + \int_{A_\epsilon} 2|Y| dP \leq \epsilon + 2 \int_{b(\epsilon)}^{\infty} \Pr(Y\geq y) dy, \)
which converges effectively to $0$ as $\epsilon\to 0$.
\end{proof}

%\begin{definition}[Measurable events]
%Any real-valued random variable $X$ such that $P(X\in\{0,1\})=1$ is a \emph{characteristic} function.
%A subset $W$ of $\Omega$ is \emph{measurable} if it has a measurable characteristic (indicator)  function
%\end{definition}

\subsection{Independence and Conditioning}

\newcommand{\eval}{\mathrm{eval}}

The concept of conditional random variable is subtle even in classical probability theory.
The basic idea is that if we condition a random quantity $Y$ on some information of kind $\calX$, then we can reconstruct $Y$ given a value $x$.
Classically, conditional \emph{random variables} are \emph{not} defined, but conditional distributions and expectations are.
Conditional expectations can be shown to exist using the Radon-Nikodym derivative, but this is uncomputable~\cite{HoyrupRojasWeihrauch2011}.

In the classical case, we condition relative to a sub-sigma-algebra of the measure space.
In the computable case, it makes sense to consider instead a sub-topology $\calT$ on $\Omega$.
We first need to define concepts of $\calT$ measurability and $\calT$ independence
\begin{definition}
Let $\calT$ be a topology on $\Omega$. 
We say a measurable random variable $X$ is $\calT$-measurable if $X$ is a limit of $\calT$-continuous random variables $X_m$, and write $\tpRV_\calT(\tpX)$ for the type of $\calT$-measurable random variables with values in $\tpX$.
\end{definition}
We define independence relative to a sub-topology using the identity random variable $I:\Omega\fto\Omega$.
\begin{definition}[Independence]
\label{defn:independentrandomvariables}
Given a topology $\calT$ on $\Omega$, we say a random variable $X:\Omega\mfto\tpX$ is \emph{independent} of $\calT$ if
\begin{equation} \Pr(X\times I \in U \times W) = \Pr(X \in U) P(W) \end{equation}
whenever $U\in\opset(\tpX)$ and $W\in\calT$. We write $\tpRV_{\perp\calT}(\tpX)$ for the type of $\calT$-independent random variables with values in $\tpX$.

We say random variables $X_1,\ldots,X_k$ are \emph{jointly} independent of $\calT$ if the product $\prod_{i=1}^{k}X_i$ is independent of $\calT$.

We say random variables $X_1,X_2$ taking values in $\tpX_1$, $\tpX_2$ are independent if for all open $U_1\subset \tpX_1$ and $U_2\subset \tpX_2$, we have
\begin{equation}
 \Pr(X_1\in U_1 \wedge X_2\in U_2) = \Pr(X_1\in U_1) \cdot \Pr(X_2\in U_2) .
\end{equation}
\end{definition}
\noindent
If $X:\Omega\fto\tpX$ is continuous, the definition of independence with respect to $\calT$ reduces to
\[ P(\{\omega\in\Omega \mid X(\omega)\in U \wedge \omega\in W) = \P(X \in U) P(W) \]
If $X$ is independent of $\calT$, and $\calT$ is the topology generated by a continuous random variable $Y$, then $X$ is independent of $Y$, since
\[ \Pr(X\in U\wedge Y\in V) = \Pr(X\in U \wedge I\in Y^{-1}(V)) = \Pr(X \in U) P(Y^{-1}(V)) = \Pr(X\in U) \Pr(Y\in V) . \]
Note that it is possible for $X_1$, $X_2$ to be independent of $\calT$, but $X_1\times X_2$ not to be.
Clearly if $X_1$ and $X_2$ are independent real-valued integrable random variables, then $\Ex(X_1X_2)=\Ex(X_1)\Ex(X_2)$.

%Since $\Omega$ is canonically isomorphic to $\Omega\times\Omega$, given random variables $X_1$, $X_2$ taking values in $\tpX_1$, $\tpX_2$, we can always construct independent random variables $Y_{1,2}$ with the same distribution as $X_{1,2}$ by taking $Y_i(\omega_1,\omega_2) = X_i(\omega_i)$ for $i=1,2$.
%We call the random variable $Y_1\times Y_2$ an \emph{independent product} of $X_1$ and $X_2$.
%Countable independent products can be constructed since $\Omega$ is isomorphic to $\Omega^\infty$.
%We say random variables $X_1$ and $X_2$ are \emph{strongly independent} if they are formed as an independent product.

\newcommand{\pts}{\mathrm{Pt}}

Since the space $(\tpT,\calT)$ need not be a Kolmogorov ($T^0$) space, it is useful to quotient out sets of points which cannot be distinguished from each other by the topology $\calT$.
The resulting \emph{quotient space} $\tpT/\calT$ is determined by the equivalence relation 
\[ t_1 \sim_\calT t_2 \iff \forall U\in\calT,\ t_1\in U \iff t_2\in U , \]
and quotient map $q_\calT:\tpT\fto \tpT/\calT$ is computable.
It is easy to see that the quotient space is Hausdorff if, and only if,
\[ t_1 \sim_\calT t_2 \vee \exists U_1,U_2\in \calT, t_1\in U_1\wedge t_2\in U_2\wedge U_1\cap U_2=\emptyset , \]
and that if the quotient space is Hausdorff, then we can define a metric by
\[ \begin{aligned} & d_\calT (q_\calT(t_1),q_\calT(t_2)) = \sup \{ d(U_1,U_2) | U_1\ni t_1 \wedge U_2\ni t_2\} \\ &\qquad\qquad\text{ where } d(U_1,U_2) = \inf \{ d(s_1,s_2) | s_1\in U_1 \wedge s_2 \in S_2\} .  \end{aligned} \]
Further, if $\calT$ is the preimage of a Hausdorff topology by a continuous function, then the quotient space $\tpT/\calT$ is Hausdorff.

We now give our notion of conditional random variable $Y|\calX$, where $\calX$ is a \emph{topology} on $\Omega$.
\begin{definition}[Conditional random variable]
\label{defn:conditionalrandomvariable}
Let $\mathcal{X}$ be a topology on $\Omega$ such that the quotient space $\Omega/\calX$ is a Polish space.
A \emph{random variable taking values in $\tpY$ conditional on $\calX$} is a continuous function $Y|\calX:(\Omega,\calX)\fto\tpRV(\tpY)$ such that $Y|\calX(\omega)$ is independent of $\calX$ for all $\omega\in\Omega$.
\end{definition}
\noindent
Note that we do not require that the values of $Y|\calX$ be \emph{jointly} independent of $\calX$.
%Also, the restriction that $\Omega/\calX$ is Hausdoff gives no loss of generality, since the preimage of a Hausdorff topology under a continuous map has a Hausdorff quotient.

It is convenient to allow a different space for the conditioning variable and consider functions $Y| : \tpX\fto\tpRV_{\perp\calX}(\tpY)$, and write $Y|x$ for $Y|(x)$.
The idea of conditioning is that knowing the value of a $\calX$-measurable random variable in $\tpX$, we know the random variable $Y$.
\begin{proposition}\label{prop:conditionalrandomvariable}
Let $\calX$ a topology on $\Omega$. The operator $\tpRV_\calX(\tpX)\times (\tpX\fto\tpRV_{\perp\calX}(\tpY)) \fto \tpRV(\tpY)$ extending the operator on continuous random variables $(\Omega\fto\tpX)\times(\tpX\times\Omega\fto\tpY):(X,Y|) \mapsto Y|(X(\omega),\omega)$, is computable.
\end{proposition}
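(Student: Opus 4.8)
The plan is to use that $\tpRV(\tpY)$ is a completion: it suffices to produce computably an effective Cauchy sequence of continuous random variables representing the output and to check that this respects the identifications of the completion. I would first unpack the inputs. Write $X=\lim_n X_n$ for an effective Cauchy sequence of $\calX$-continuous random variables $X_n:\Omega\fto\tpX$ with $d(X_n,X)\le 2^{-n}$. Using continuity of $Y|:\tpX\fto\tpRV_{\perp\calX}(\tpY)$ together with the completion structure on $\tpRV_{\perp\calX}(\tpY)$ and Cartesian closedness, I would extract continuous fields $\Phi_n:\tpX\times\Omega\fto\tpY$, the joint continuity coming for free from the exponential correspondence $\tpX\fto(\Omega\fto\tpY)\;\cong\;(\tpX\times\Omega)\fto\tpY$, with each section $\Phi_n(x,\cdot)$ a continuous random variable independent of $\calX$ and $d\bigl(\Phi_n(x,\cdot),Y|x\bigr)\to0$. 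The candidate output is $Z=\lim_n Z_n$ with $Z_n(\omega)=\Phi_n(X_n(\omega),\omega)$, each $Z_n$ continuous as a composition of continuous maps.

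The bulk of the proof is to show that $(Z_n)$ is effectively Cauchy in the Fan metric with a computable modulus. I would bound $d(Z_m,Z_n)$ by separating two perturbations: the change from the field $\Phi_n$ to $\Phi_m$, controlled directly by the approximation quality $d(\Phi_k(x,\cdot),Y|x)$, and the change from the plugged-in argument $X_n$ to $X_m$. The second amounts to estimating $\Pr\bigl(d(Y|(X_m(\omega),\omega),\,Y|(X_n(\omega),\omega))>\epsilon\bigr)$, which is delicate because the arguments $X_m(\omega)$ and $X_n(\omega)$ are themselves random and, a priori, correlated with the $\omega$ driving the $\tpY$-valued randomness of each $Y|x$.

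The crux, and the step I expect to be the main obstacle, is to break this correlation using independence. Since each $X_n$ is $\calX$-measurable and each $Y|x$ is independent of $\calX$ (Definition~\ref{defn:independentrandomvariables}), conditioning on the $\calX$-information freezes $X_m(\omega)$ and $X_n(\omega)$ to values fixed by $\calX$ while leaving the conditional law of the randomness of $Y|(x,\cdot)$ equal to its unconditional law; this is exactly the identity on probabilities that independence provides. It lets me dominate the conditional probability by the pointwise Fan distance $d\bigl(Y|X_m(\omega),\,Y|X_n(\omega)\bigr)$, which is small because $Y|$ is continuous and $d(X_m(\omega),X_n(\omega))$ is small with high probability. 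The remaining non-uniformity in $x$ --- both in the modulus of continuity of $Y|$ and in the rate of $\Phi_n\to Y|$ --- is handled exactly as in the continuous-mapping theorem (Theorem~\ref{thm:imagerandomvariable}): I would isolate the set of $x\in\tpX$ at which these moduli are poor at scale $\delta$ and use the computability and inner continuity of the distribution of $X$ (Theorem~\ref{thm:randomvariabledistribution}) to make its $X$-probability below $\epsilon$, equivalently concentrating the distribution of $X$ on a compact part of $\tpX$. Integrating the conditional bound over the $\calX$-information then yields $d(Z_m,Z_n)<\epsilon$ for $m,n$ past a computable threshold.

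Finally I would verify that the limit $Z$ does not depend on the chosen approximating sequences; this follows by applying the same effective-continuity estimates to two competing sequences and concluding they are Fan-equivalent, so the operator is well defined into the completion. Since every modulus produced above is computable from names of $X$ and $Y|$, the construction of $Z$ is effective, which is the assertion of the proposition. The constructive subtlety to keep in mind is that no pointwise conditioning is ever actually carried out: independence enters only through the probability identity it guarantees, so that each quantity manipulated stays a computable real or a computable element of $\tpIv_<$.
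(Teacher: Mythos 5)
Your overall scaffolding (the completion, the exponential correspondence $\tpX\fto(\Omega\fto\tpY)\equiv\tpX\times\Omega\fto\tpY$, a diagonal Cauchy estimate, the $B_{\delta,\epsilon}$ device of Theorem~\ref{thm:imagerandomvariable}) matches the paper's, but the step you yourself flag as the crux --- breaking the correlation between the random argument $X_m(\omega)$ and the $\omega$ driving $Y|(x,\cdot)$ by ``conditioning on the $\calX$-information'' --- is a genuine gap, and it is exactly the point where the paper does something different. In this framework there is no disintegration operator to condition with: the paper stresses that conditional distributions obtained via Radon--Nikodym derivatives are uncomputable, and Definition~\ref{defn:independentrandomvariables} supplies a product identity only for events of the form $\{X\in U\}\cap W$ with $U\in\opset(\tpX)$ and $W\in\calX$. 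For a general continuous $X_m$ the events ``$X_m$ frozen at the value $x$'' are null, so the identity ``the conditional law of the randomness of $Y|(x,\cdot)$ equals its unconditional law'' is precisely the pointwise conditioning your final paragraph promises never to perform; the independence hypothesis as stated does not yield the domination of $\Pr\bigl(d(Y|(X_m(\omega),\omega),\,Y|(X_n(\omega),\omega))>\epsilon\bigr)$ by pointwise Fan distances when $X_m,X_n$ range over a continuum of values. A second, related obstruction: even granting a freezing argument, you would need the \emph{pair} $(Y|x,Y|x')$ to decouple from $\calX$, whereas Definition~\ref{defn:conditionalrandomvariable} only makes each value individually independent of $\calX$; the paper explicitly remarks that joint independence of the values is not required.

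The paper's fix is the idea missing from your sketch: reduce to \emph{simple} $\calX$-measurable approximants. Each $X_m$ may be taken to assume finitely many values $x_{m,1},\ldots,x_{m,k_m}$ on clopen sets $W_{m,1},\ldots,W_{m,k_m}$ in $\calX$; then every appeal to independence becomes a finite sum over clopen pieces, i.e., exactly the product-form events the definition covers, and no conditioning or disintegration is ever invoked. This also restores the effectivity your inner limit needs: for fixed simple $X_m$, the approximants $Y_{n_1}|X_m$ and $Y_{n_2}|X_m$ use the \emph{same} value $x_{m,i}$ on each piece, so a plain union bound over the $k_m$ pieces gives $d(Y_{n_1}|X_m,Y_{n_2}|X_m)<k_m 2^{-n}$ --- an explicitly computable modulus in which the finiteness of $k_m$ does real work --- and only afterwards does one let $m\to\infty$ via the argument of Theorem~\ref{thm:imagerandomvariable}. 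Your proposal as written produces no computable modulus for the analogous step, because the bound you want rests on the unavailable conditioning identity. If you insert the simple-approximation reduction before your Cauchy estimate, the remainder of your outline (including well-definedness on equivalent representing sequences) goes through essentially as in the paper.
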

\begin{proof}
For the case that each $Y|x$ is a continuous random variable, note that $Y| : \tpX\fto(\Omega\fto\tpY) \equiv \tpX\times\Omega\fto\tpY$.

Suppose $X$ is a $\calX$-measurable simple random variable.
Let $U\in\calX$, and let $x_1,\ldots,x_{j}$ be the values of $X$ lying in $U$.
Then \( \Pr(X\in U \wedge Y\in V) = \sum_{i=1}^{j} \Pr(X = x_i \wedge [Y|x_i]\in V) . \)
Since each $Y|x_i$ is independent of $\calX$, \( \sum_{i=1}^{j} \Pr(X = x_i \wedge [Y|x_i]\in V) = \sum_{i=1}^{j} \Pr(X_m=x_i) \Pr([Y|x_i]\in V) = \int_{\omega\in U} \Pr([Y|X(\omega)]\in V) d\Pr[X](\omega). \)

Since measurable random variables are limits of fast converging Cauchy sequences of continuous random variables, we have $X:\N\times\Omega\fto\tpX$ and $Y|:\tpX\times\N\times\Omega\fto\tpY$; further, we can assume each $X_m$ is a simple random variable.
Denote $X(m,\cdot)$ by $X_m$, $Y|(\cdot,n,\cdot)$ by $Y_n|$, $Y|(x,n,\cdot)$ by $Y_n|x$, and $Y|(X_m(\omega),n,\omega)=[Y_n|X_m](\omega)$.

For fixed $m$, $X_m$ takes finitely many values $x_{m,1},\ldots,x_{m,k_m}$, on clopen sets $W_{m,1},\ldots,W_{m,k_m}$ in $\calX$.
Since for for all $x$, and for $n_1,n_2\geq n$, $d(Y_{n_1}|x,Y_{n_2}|x)<2^{-n}$, we have $d(Y_{n_1}|X_m,Y_{n_2}|X_m)<k_m 2^{-n}$, so $(Y_n|X_m)_{n\in\N}$ is an effective Cauchy sequence, and converges to a random variable $Y|X_m$.

By the argument of the proof of Theorem~\ref{thm:imagerandomvariable}, for any $n$ there exists $M(n)$ such that $\Pr(d(Y|X_{m_1},Y|X_{m_2}) > 2^{-n})<2^{-n}$ whenever $m_1,m_2\geq M(n)$, so $(Y|X_m)_{m\in\N}$ converges effectively.
\end{proof}

\begin{theorem}\label{thm:conditionalrandomvariable}
Let $\calX$ a topology on $\Omega$ such that the quotient space $\Omega/\calX$ is a Polish space.
Then there is a computable embedding of $(\Omega/\calX \fto\tpRV_{\perp\calX}(\tpY)) \hookrightarrow \tpRV(\tpY)$ extending the operator on continuous random variables given by $Y|\calX \mapsto Y|\calX(\omega)(\omega)$.
\end{theorem}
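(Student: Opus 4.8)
The plan is to realise the claimed embedding as the special case of Proposition~\ref{prop:conditionalrandomvariable} in which the conditioning variable is the \emph{canonical} $\calX$-measurable random variable, namely the quotient map itself. Observe that $q_\calX:\Omega\fto\Omega/\calX$ is continuous from $(\Omega,\calX)$ to the quotient topology, hence is a $\calX$-continuous random variable; since $\Omega/\calX$ is Polish by hypothesis, $q_\calX\in\tpRV_\calX(\Omega/\calX)$ and is computable. Taking $\tpX=\Omega/\calX$ and $X=q_\calX$ in Proposition~\ref{prop:conditionalrandomvariable}, and identifying $g\in(\Omega/\calX\fto\tpRV_{\perp\calX}(\tpY))$ with the map $(t,\omega)\mapsto g(t)(\omega)$, we obtain a computable operator $e(g)=[\,\omega\mapsto g(q_\calX(\omega))(\omega)\,]\in\tpRV(\tpY)$. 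On continuous $g$ this is exactly $Y|\calX\mapsto Y|\calX(\omega)(\omega)$, so $e$ extends the stated operator and is computable; this discharges the existence and computability of the forward map.

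It remains to check that $e$ is an embedding, i.e.\ injective with a computable inverse on its image. For injectivity, suppose $g_1$ and $g_2$ differ, so that $d(g_1(t),g_2(t))>0$ for $t$ in a set $T$ of positive $q_\calX$-pushforward measure (using continuity of the $g_i$ and, if necessary, restricting to the support of the pushforward of $P$). Because each $g_i(t)$ is independent of $\calX$, the conditional law of $e(g_i)$ given $q_\calX=t$ is the law of $g_i(t)$; integrating the positive conditional Fan-discrepancy over $t\in T$ shows $\Pr(d(e(g_1),e(g_2))>\varepsilon)>0$ for some $\varepsilon>0$, whence $e(g_1)\neq e(g_2)$.

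The main obstacle is producing a \emph{computable} inverse. The cleanest picture is that $\calX$-independence lets us split the randomness: the Polish quotient $\Omega/\calX$ carries the pushforward $P\circ q_\calX^{-1}$, and a disintegration gives a measure isomorphism $\Omega\cong(\Omega/\calX)\times\Omega'$ carrying $P$ to a product, under which every $\calX$-independent random variable depends only on the $\Omega'$-coordinate. Writing $\omega=(t,\omega')$, the value $g(t)$ is then a random variable in $\omega'$ alone, and $e(g)(t,\omega')=g(t)(\omega')$ is precisely the \emph{uncurrying} of $g$. Hence $e$ is the uncurrying map for this decomposition, and its inverse is currying, which is computable by Cartesian closure; recovering $g$ from $Z=e(g)$ amounts to currying $Z$, and continuity and computability of $t\mapsto g(t)$ in $\tpRV_{\perp\calX}(\tpY)$ follow from those of $Z$ together with the Fan-metric estimates already used in Proposition~\ref{prop:conditionalrandomvariable} and Theorem~\ref{thm:imagerandomvariable}. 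The hard part is establishing this disintegration \emph{effectively}: one must compute, uniformly in $t$, the conditional random variable of $Z$ given $q_\calX=t$ and verify that $\calX$-independence makes this conditioning well defined despite $\{q_\calX=t\}$ being generically null. This is exactly where Polishness of $\Omega/\calX$ (via the Lebesgue--Rokhlin structure of the base space $\{0,1\}^\omega$) is used, and is the step requiring the most care.
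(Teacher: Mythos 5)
Your first paragraph is precisely the paper's entire proof: the paper disposes of Theorem~\ref{thm:conditionalrandomvariable} in one line, ``Apply Proposition~\ref{prop:conditionalrandomvariable} where $\tpX=\Omega/\calX$ and $X$ is the quotient map,'' and your instantiation $X=q_\calX$, $e(g)=[\,\omega\mapsto g(q_\calX(\omega))(\omega)\,]$ is exactly that. So on the content the paper actually establishes, you are correct and in full agreement with it.

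The remaining two paragraphs attempt more than the paper does---the paper never proves injectivity or a computable inverse, and reads ``embedding'' loosely as the existence of the computable forward map---and this extra material contains a genuine error you should be aware of. Your claim that, after a product decomposition $\Omega\cong(\Omega/\calX)\times\Omega'$, \emph{every} $\calX$-independent random variable depends only on the $\Omega'$-coordinate is false: on $\{0,1\}\times\{0,1\}$ with the uniform measure, $X(a,b)=a\oplus b$ (exclusive or) is independent of the first coordinate's topology yet is not a function of $b$ alone. Independence of $\calX$ constrains joint distributions, not measurability with respect to a complementary factor, so $e$ is not literally an uncurrying map and its ``inverse by currying'' does not exist in the form you describe; indeed the values $g(t)$ for distinct $t$ need not be jointly independent of $\calX$ (the paper explicitly notes after Definition~\ref{defn:conditionalrandomvariable} that joint independence is not required), which is precisely the freedom your factorization would destroy. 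Your injectivity sketch has a related soft spot: passing from $d(g_1(t),g_2(t))>0$ on a set of positive pushforward measure to $\Pr(d(e(g_1),e(g_2))>\varepsilon)>0$ requires the very disintegration you defer, since the conditional law of $e(g_i)$ given $q_\calX=t$ being that of $g_i(t)$ is a Fubini-type statement over generically null fibres. None of this affects the validity of what the paper claims and proves---your first paragraph suffices for that---but the stronger reading of ``embedding'' would need a different argument than the one you propose.
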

\begin{proof}
Apply Proposition~\ref{prop:conditionalrandomvariable} where $\tpX=\Omega/\calX$ and $X$ is the quotent map.
\end{proof}

%\begin{proposition}
%Let $\calX$ a topology on $\Omega$ such that the quotient space $\Omega/\calX$ is a Polish space.
%Then there the computable embedding of $(\Omega/\calX \fto\tpRV_{\perp\calX}(\tpY)) \hookrightarrow \tpRV(\tpY)$ given by Theorem~\ref{thm:conditionalrandomvariable} does not have a computable inverse.
%\end{proposition}

The following result is an analogue of classical results on conditioning.
\begin{proposition}
Suppose $Y|\calX:\Omega/\calX \fto \tpRV_{\perp\calX}(\tpY)$ and $Y:\tpRV(\tpY)$ the unconditioned version given by Theorem~\ref{thm:conditionalrandomvariable}.
$Y$ is $\calX$-measurable if, and only if, for all $\omega$, $Y|\calX(\omega)$ is a constant random variable.
$Y$ is independent of $\calX$ if, and only if, $Y|\calX(\omega_1)=Y|\calX(\omega_2)$ for all $\omega_1,\omega_2\in\Omega$.
\end{proposition}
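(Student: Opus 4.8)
The plan is to base both equivalences on the conditional-distribution identity established in the proof of Proposition~\ref{prop:conditionalrandomvariable}. Writing $q_\calX:\Omega\fto\Omega/\calX$ for the (computable) quotient map and $\mu=\Pr[q_\calX]$ for the law of the $\calX$-class, that proof gives, for every open $V\subseteq\tpY$ and every $W\in\calX$ with $W=q_\calX^{-1}(\bar W)$,
\[ \Pr\bigl(Y\in V \wedge I\in W\bigr) = \int_{q_\calX(\omega)\in\bar W} \Pr\bigl(Y|\calX(\omega)\in V\bigr)\,dP(\omega) = \int_{\bar W}\Pr\bigl(Y|\calX(\bar\omega)\in V\bigr)\,d\mu(\bar\omega). \]
In other words, the law of the slice $Y|\calX(\bar\omega)$ plays the role of the conditional distribution of $Y$ given $\{q_\calX=\bar\omega\}$. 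First I would record this identity, extending it from simple conditioning variables to the general case exactly as in that proof, by passing to the fast Cauchy sequences defining $Y$ and $Y|\calX$.

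For the measurability equivalence I would argue as follows. If every slice $Y|\calX(\bar\omega)$ is a constant random variable, say the constant at $c(\bar\omega)\in\tpY$, then $c$ is continuous (the map sending a constant random variable to its value is a homeomorphism onto its image, so $c$ is the composite of $Y|\calX$ with its inverse), and the reconstruction $Y(\omega)=Y|\calX(q_\calX(\omega))(\omega)=c(q_\calX(\omega))$ shows that $Y=c\circ q_\calX$ factors through the quotient, hence is $\calX$-continuous and a fortiori $\calX$-measurable. Conversely, if $Y$ is $\calX$-measurable then, being a limit in probability of $\calX$-continuous random variables, it agrees almost surely with $g\circ q_\calX$ for some measurable $g$; substituting into the identity above forces $\Pr(Y|\calX(\bar\omega)\in\cdot)$ to be the point mass at $g(\bar\omega)$ for $\mu$-almost every $\bar\omega$, so $Y|\calX(\bar\omega)$ is almost surely constant, i.e.\ a constant random variable, and continuity of $\bar\omega\mapsto Y|\calX(\bar\omega)$ upgrades this from $\mu$-almost every $\bar\omega$ to every $\bar\omega$. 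Since ``constant random variable'' means precisely ``almost surely constant'', this equivalence closes cleanly.

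For the independence equivalence the easy implication is the forward one: if $Y|\calX(\omega_1)=Y|\calX(\omega_2)$ for all $\omega_1,\omega_2$, then all slices equal one fixed random variable $Z$, which by hypothesis is independent of $\calX$, and the reconstruction gives $Y=Z$, so $Y$ is independent of $\calX$. For the reverse implication I would feed independence, $\Pr(Y\in V\wedge I\in W)=\Pr(Y\in V)\,\mu(\bar W)$, into the identity above; comparing the two expressions for all $\bar W$ forces $\Pr(Y|\calX(\bar\omega)\in V)=\Pr(Y\in V)$ for $\mu$-almost every $\bar\omega$ and every $V$, and then for every $\bar\omega$ by continuity, so that all the slices share a common distribution, namely the law of $Y$.

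The main obstacle is the final step of this last implication: the identity only constrains the \emph{laws} of the slices, whereas the statement asks for the stronger pointwise equality $Y|\calX(\omega_1)=Y|\calX(\omega_2)$ in $\tpRV(\tpY)$, that is, almost-sure equality of the slice random variables. Promoting equality of the conditional laws to equality of the slices themselves is where the real work lies, and I expect this to be the crux. The route I would pursue is to exploit the continuity of the map $\bar\omega\mapsto Y|\calX(\bar\omega)$ into the Fan-metric space $\tpRV(\tpY)$ together with the diagonal reconstruction $Y=Y|\calX(q_\calX(\cdot))(\cdot)$, aiming to extract from the independence of $Y$ a single representative common to all slices; the key quantity to get hold of is the Fan distance $d\bigl(Y|\calX(\bar\omega_1),Y|\calX(\bar\omega_2)\bigr)$ itself, controlled directly from the independence hypothesis rather than merely from the slice distributions.
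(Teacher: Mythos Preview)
The paper states this proposition without proof, labelling it merely ``an analogue of classical results on conditioning''. There is therefore nothing to compare your argument against; your write-up is already more than the paper supplies.

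Your treatment of the three easy implications is fine. Your diagnosis of the remaining one --- that independence of $Y$ from $\calX$ only pins down the \emph{laws} of the slices $Y|\calX(\bar\omega)$ and not the slices themselves as elements of $\tpRV(\tpY)$ --- is exactly right, and in fact this gap cannot be closed: the ``only if'' in the independence clause is false as stated. Take $\Omega=\{0,1\}^\omega$ with $\calX$ the topology generated by the even-indexed coordinates, so $\Omega/\calX\cong\{0,1\}^\omega$, and set
\[
Y|\calX(\bar\omega)(\omega)=\bar\omega_0\oplus\omega_1\in\{0,1\}.
\]
Each slice depends only on the odd coordinate $\omega_1$ and is therefore independent of $\calX$; the map $\bar\omega\mapsto Y|\calX(\bar\omega)$ is continuous; and the reconstructed $Y(\omega)=\omega_0\oplus\omega_1$ is independent of $\calX$ by Fubini. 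Yet $Y|\calX(0\cdots)$ and $Y|\calX(1\cdots)$ are the random variables $\omega\mapsto\omega_1$ and $\omega\mapsto 1-\omega_1$, which are nowhere equal. So the route you sketched for promoting equality of slice laws to equality of slices was bound to fail; what survives is the weaker (and classically standard) statement that independence of $Y$ forces $\bar\omega\mapsto\Pr[Y|\calX(\bar\omega)]$ to be constant.
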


We cannot compute $Y$ given a conditional random variable $Y|\calX$, and a $\calX$-measurable random variable $X$ taking values in $\tpX$, since the value of $X$ might not be sufficient to identify a unique element of $\calX$. 
However, we can compute the joint distribution of $X\times Y$ using conditional probabilities.
We define these in terms of conditional random variables, and show that they satisfy the usual classical properties.
\begin{definition}
The \emph{conditional probability} $\Pr(Y|\calX)$ is the function $(\Omega,\calX) \fto \tpPr(\tpY)$ defined by
\begin{equation}\label{eq:conditionalprobability} \Pr(Y|\calX) : (\Omega,\calX)\times\tpOp(\tpY)\fto \tpIv_<: (\omega,V) \mapsto \Pr([Y|\calX(\omega)]\in V) . \end{equation}
We define $\Pr(Y\in V|\calX):(\Omega,\calX)\fto\tpIv_<$, $\Pr(Y|x):\tpOp(\tpY)\fto\tpIv$ and $\Pr(Y\in V|x):\tpIv_<$ in the natural way.
\end{definition}
We now show that the joint distribution of $\calX$-measurable $X$ and $Y$ can be computed from $X$ and $Y|\calX$.
As a consequence of Theorems~\ref{thm:randomvariabledistribution} and~\ref{thm:productrandomvariable}, we can define a distribution conditional on a random variable lying in a given set:
\begin{definition}
Let $X:\tpRV(\tpX)$ be a random variable, and $U\in\tpOp(\tpX)$.
Define the induced valuation on $\Omega$ by 
\[ P[\cdot|X\in U](V) = \Pr(X\times I \in U\times V) . \]
Write $P(V|X\in U)=P[\cdot|X\in U](V)$.
\end{definition}
\noindent 
Note that $P[\cdot|X\in U]$ has total measure $P(U)$, and if $X$ is continuous, $P[\cdot|X\in U](V) = P(X^{-1}(U)\cap V)$.

Further, if $X$ is $\calX$-measurable, then the projection $\Omega\fto\Omega/\calX$ induces a measure on $\Omega/\calX$.
\begin{proposition}
\label{prop:conditionalprobability}
If $X:\tpRV(\tpX)$ is a $\calX$-measurable, and $Y|\calX:(\Omega,\calX)\fto\tpRV(\tpY)$ has values which are independent of $\calX$, then 
\[ P(X\in U \wedge Y\in V) = \int_{[\omega]\in\Omega/\calX} \Pr(Y|\calX[\omega]\in V) dP([\omega]|X\in U) . \]
\end{proposition}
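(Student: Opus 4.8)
The plan is to read this identity as the effective, constructive form of the classical law of total probability (disintegration over $\calX$), and to prove it by first reducing to the case where both $X$ and $Y|\calX$ are simple — where the statement becomes a finite computation — and then passing to the limit using the computability of distributions and of products. Since $X$ is $\calX$-measurable it is a limit of $\calX$-continuous random variables, each of which is a uniform limit of simple $\calX$-measurable random variables (by the simple-approximation lemma for random variables over $\Sigma$); likewise the continuous map $Y|\calX:\Omega/\calX\fto\tpRV(\tpY)$ can be approximated by maps that are constant on clopen $\calX$-pieces, taking values that are themselves $\calX$-independent random variables. Refining the two partitions to a common one, I would arrange clopen sets $W_{ij}\in\calX$ on which $X\equiv x_i$ and $Y|\calX\equiv Z_j$, where each $Z_j$ is independent of $\calX$, and where consequently $Y(\omega)=Z_j(\omega)$ for $\omega\in W_{ij}$.

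On these simple objects both sides reduce to the same finite sum. For the left-hand side, partitioning $\{X\in U\wedge Y\in V\}$ into the disjoint pieces $W_{ij}\cap\{Z_j\in V\}$ with $x_i\in U$ gives
\[ \Pr(X\in U\wedge Y\in V) = \sum_{i:x_i\in U}\sum_j \Pr\big(\omega\in W_{ij}\wedge Z_j(\omega)\in V\big) = \sum_{i:x_i\in U}\sum_j \Pr(Z_j\times I\in V\times W_{ij}), \]
and the independence of each $Z_j$ from $\calX$ (Definition~\ref{defn:independentrandomvariables}, applied with $\calT=\calX$ and $W_{ij}\in\calX$) turns each term into $\Pr(Z_j\in V)\,P(W_{ij})$. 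For the right-hand side, the measure $P(\cdot\,|X\in U)$ assigns mass $P(W_{ij})$ to the piece $W_{ij}$ precisely when $x_i\in U$ and mass $0$ otherwise, while the integrand $\Pr(Y|\calX[\omega]\in V)$ equals $\Pr(Z_j\in V)$ there; hence the integral is the identical finite sum. This is essentially the simple-random-variable computation already performed in the proof of Proposition~\ref{prop:conditionalrandomvariable}.

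Finally I would pass to the limit along simple approximants $X_m\to X$. By Theorems~\ref{thm:randomvariabledistribution} and~\ref{thm:productrandomvariable}, the joint distribution $\Pr(X_m\times I\in U\times\,\cdot\,)$ is computable and convergent, so both the left-hand side $\Pr(X_m\in U\wedge Y\in V)$ and the induced measures $P(\cdot\,|X_m\in U)$ on $\Omega/\calX$ converge to their limits; because the integrand $\Pr(Y|\calX[\omega]\in V)$ is computable only in $\tpIv_<$, the right-hand integral converges from below, in the same direction as the left-hand side, so the two lower limits of the matching simple-case values coincide and the identity holds. The hard part will be this ``diagonal'' step, namely decoupling the joint event $\{\omega\in W\wedge [Y|\calX[\omega]](\omega)\in V\}$ — in which the random variable $Y|\calX[\omega]$ and its evaluation point $\omega$ are coupled — into the product $\Pr(Y|\calX[\omega]\in V)\,P(W)$. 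This decoupling is exactly what independence of each value from $\calX$ provides, but it is a literal finite product only once $Y|\calX$ has been made constant on $\calX$-pieces; controlling the approximation error in the general case, given only lower semicontinuity of $\Pr(\cdot\in V)$, is the delicate bookkeeping the argument must manage.
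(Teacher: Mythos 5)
Your proposal is correct and takes essentially the same route as the paper, whose entire proof reads ``the result clearly holds for simple random variables $X$, and extends to all random variables'': your finite-sum verification in the simple case is precisely the computation already carried out inside the proof of Proposition~\ref{prop:conditionalrandomvariable}, which you rightly cite, with independence of each value from $\calX$ doing the factorisation. The limiting step you flag as delicate is the part the paper merely asserts, so your elaboration via Theorems~\ref{thm:randomvariabledistribution} and~\ref{thm:productrandomvariable} and one-sided (lower) convergence fills in, rather than departs from, the paper's argument.
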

\begin{proof}
The result clearly holds for simple random variables $X$, and extends to all random variables.
\end{proof}

Note that if $X:\tpRV(\tpX)$, and $Y|\tpX:\tpX\fto\tpRV(\tpY)$, then the distributions $\xi=\Pr[X]$ and $\eta_x=\Pr[Y|x]$ are computable as valuation.
Define the joint distribution $\xi\!\rtimes\!\eta$ on $\tpX\times\tpY$ by its integrals
\begin{equation}
  \int_{\tpX\times\tpY} \psi(x,y) d\xi\!\!\rtimes\!\!\eta(x,y) = \int_{\tpX} \int_{\tpY} \psi(x,y) d\eta_x(y) \, d\xi(x)
\end{equation}
In particular $\xi\!\!\rtimes\!\!\eta(U\times V) = \int_{x\in U} \eta_x(V) d\xi(x)$.
Proposition~\ref{prop:conditionalprobability} strengthens this result by weakening the requirement that $Y$ is defined on $\tpX$ itself.

\begin{definition}
Suppose each $Y|\calX[\omega]$ is an integrable random variable. 
The \emph{conditional expectation} $\Ex(Y|\calX)$ is the function $(\Omega,\calX) \fto \tpR$ defined by
\begin{equation}\label{eq:conditionalexpectation} \Ex(Y|\calX) : \omega \mapsto \Ex(Y|\calX(\omega)). \end{equation}
\end{definition}
Then for any random variable $X:\tpRV(\tpX)$, we have $\Ex[Y|X]:\tpRV(\tpR)$ by composition.
\begin{proposition}
If $Y|\calX:(\Omega,\calX):\tpRV(\tpY)$ is independent of $\calX$, then for any $\calX$-measurable $X:\tpRV(\tpR)$, we have
\begin{equation} \Ex(X\Ex(Y|\calX)) = \Ex(XY) . \end{equation}
In particular, $\Ex(Y) = \Ex(\Ex(Y|\calX))$.
\end{proposition}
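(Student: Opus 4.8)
The plan is to isolate the single identity
\begin{equation*} \Ex(\chi_W\,Y) = \int_W \Ex\bigl(Y|\calX(\omega)\bigr)\,dP(\omega) \end{equation*}
for every $\calX$-clopen set $W$, which is the computable analogue of the defining property of conditional expectation, and to deduce the proposition from it by bilinearity. First I would reduce to the case where $X$ is a simple $\calX$-measurable random variable: since $X$ is $\calX$-measurable it is a limit of $\calX$-continuous simple $X_m = \sum_i x_{m,i}\chi_{W_{m,i}}$ with $W_{m,i}\in\calX$ clopen, and under the integrability hypotheses implicit in writing $\Ex(XY)$ (say $X,Y\in\mathcal{L}^2$) both $\Ex(X_m Y)$ and $\Ex(X_m\,\Ex(Y|\calX))$ converge to the corresponding quantities for $X$ by the Cauchy--Schwarz inequality for the $\mathcal{L}^p$ norms. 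For a simple $X_m$, writing $g=\Ex(Y|\calX)$ one has $\Ex(X_m\,g) = \sum_i x_{m,i}\Ex(\chi_{W_{m,i}} g) = \sum_i x_{m,i}\int_{W_{m,i}} g\,dP$ and $\Ex(X_m Y) = \sum_i x_{m,i}\Ex(\chi_{W_{m,i}}Y)$, so the claim for $X_m$ is exactly the identity above applied to each $W_{m,i}$.

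To prove the identity I would discretise the conditioning variable. Using the construction underlying Theorem~\ref{thm:conditionalrandomvariable} and Proposition~\ref{prop:conditionalrandomvariable}, $Y$ is the limit of the random variables obtained by conditioning on simple approximants to the quotient map, so it suffices to treat the case where $Y|\calX$ is simple on $\Omega/\calX$, i.e. $Y = \sum_\ell \chi_{D_\ell} Z_\ell$ with the $D_\ell\in\calX$ clopen and disjoint and each $Z_\ell := Y|\calX[\omega_\ell]$ an integrable random variable independent of $\calX$. The crux is then the independence lemma: if $A\in\calX$ is clopen and $Z$ is integrable and independent of $\calX$, then $\chi_A$ and $Z$ are independent real-valued random variables (by Definition~\ref{defn:independentrandomvariables} applied with $W=A$), whence $\Ex(\chi_A Z) = P(A)\,\Ex(Z)$ by the product rule for expectations of independent integrable variables. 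Applying this with $A = W\cap D_\ell$ gives $\Ex(\chi_W Y) = \sum_\ell \Ex(\chi_{W\cap D_\ell}Z_\ell) = \sum_\ell P(W\cap D_\ell)\,\Ex(Z_\ell)$, while $g(\omega)=\Ex(Z_\ell)$ for $\omega\in D_\ell$ yields $\int_W g\,dP = \sum_\ell P(W\cap D_\ell)\,\Ex(Z_\ell)$; the two sides coincide.

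The main obstacle is controlling the two nested limits --- the approximation of $X$ by simple $\calX$-measurable variables and the discretisation of $Y|\calX$ --- while keeping everything integrable. The delicate point is the \emph{diagonal} nature of $Y(\omega)=Y|\calX[\omega](\omega)$: closeness of $Y|\calX[\omega]$ to the representative $Z_\ell$ in the Fan metric does not obviously transfer to the evaluation at $\omega$ itself, so I would lean on the convergence in probability of the conditioned variables to $Y$ already established in Proposition~\ref{prop:conditionalrandomvariable} (via the argument of Theorem~\ref{thm:imagerandomvariable}) rather than re-deriving it, together with $\mathcal{L}^2$-domination to pass the expectations through the limit. Finally, the ``in particular'' statement is the case $X\equiv 1$, which is trivially $\calX$-measurable, giving $\Ex(Y)=\Ex(\Ex(Y|\calX))$.
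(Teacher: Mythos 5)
Your proposal is correct and follows essentially the same route as the paper's proof: reduce to the case where $X$ is simple and $Y|\calX$ takes finitely many values on sets in $\calX$, use independence of each conditional value from $\calX$ to factor $\Ex(\chi_A Z)=P(A)\,\Ex(Z)$, and extend to measurable random variables by taking limits. The only difference is one of thoroughness --- the paper compresses the final step into ``the result follows by extension to measurable random variables,'' whereas you spell out the double limit (simple approximants of $X$ and discretisation of $Y|\calX$) with Cauchy--Schwarz and $\mathcal{L}^2$-domination, and you allow the partitions for $X$ and $Y|\calX$ to differ before intersecting them, where the paper tacitly uses a common partition $\{A_i\}$.
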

\begin{proof}
If $Y|\calX$ takes finitely many values, each on sets $A_i \in \calX$, each value is continuous, and $X$ is simple taking values $x_i$ on $A_i$, then
\[ \begin{aligned} \Ex(XY) &\textstyle = \int_{\omega\in\Omega} X(\omega) Y|\calX(\omega) (\omega) dP(\omega) = \int_{\omega\in\Omega} \sum_{i=1}^{k} I[\omega\in A_i]  x_i Y|\calX(A_i)(\omega) dP(\omega) \\ &\qquad\textstyle = \sum_{i=1}^{k} x_i \int_{\omega\in A_i} Y|\calX(A_i)(\omega) dP(\omega) = \sum_{i=1}^{k} P(A_i) x_i \Ex(Y|\calX(A_i)) = \Ex(X\Ex(Y|\calX)) . \end{aligned} \] 
The result follows by extension to measurable random variables.
\end{proof}

In the definition of conditional random variable, we use objects of type $\tpX\fto\tpRV(\tpY)$, which are random-variable-valued functions, rather than \emph{random functions} with type $\tpRV(\tpX\fto\tpY)$.
The latter type encodes strictly more information than the former.
\begin{theorem}[Random function]
The natural bijection $\tpRV(\tpX\fto\tpY) \hookrightarrow (\tpX\fto\tpRV(\tpY))$ is computable, but its inverse is not continuous.
\end{theorem}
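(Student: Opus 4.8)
The plan is to give the map of the statement explicitly through evaluation and currying, check that it is a computable injection by reducing to the continuous mapping theorem, and then break continuity of the inverse with an explicit ``moving bump'' family, which is where the real content lies. Write $\Phi$ for the map in question. Given a random function $F:\Omega\mfto(\tpX\fto\tpY)$ and a point $x\in\tpX$, let $\mathrm{ev}_x:(\tpX\fto\tpY)\fto\tpY$ be evaluation at $x$, which is continuous and computable uniformly in $x$ by Cartesian closedness of the category of computable types. I would define $\Phi(F)(x):=\mathrm{ev}_x(F)$, the image of the random function $F$ under $\mathrm{ev}_x$, which is a computable measurable random variable by Theorem~\ref{thm:imagerandomvariable}. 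Concretely, if $F$ is presented by a fast Cauchy sequence of continuous random functions $F_n:\Omega\fto(\tpX\fto\tpY)$, currying yields continuous random variables $x\mapsto F_n(\cdot)(x)$; since $\mathrm{ev}_x$ is non-expanding for the uniform metric $\rho$ on the function space, $\{\omega\mid d(F_n(\omega)(x),F_m(\omega)(x))>\varepsilon\}\subset\{\omega\mid \rho(F_n(\omega),F_m(\omega))>\varepsilon\}$, so the Fan distance $d(F_n(\cdot)(x),F_m(\cdot)(x))$ is dominated by $d(F_n,F_m)$, \emph{uniformly in $x$}. Hence $(F_n(\cdot)(x))_n$ is an effective Cauchy sequence with modulus independent of $x$, its limit $\Phi(F)(x)$ depends continuously on $x$, and $\Phi$ is computable from a name of $F$. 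The point to pin down here is the precise metric on the function-valued type $\tpRV(\tpX\fto\tpY)$; I would take $\tpX$ compact (or work on compacta) so that $\rho$ is the sup-metric and the non-expansiveness above is immediate.

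For injectivity, suppose $\Phi(F)=\Phi(G)$. Then for every $x$ in a fixed countable dense set $D\subset\tpX$ the random variables $F(\cdot)(x)$ and $G(\cdot)(x)$ are almost surely equal. Intersecting the countably many full-measure sets and invoking continuity of the sample paths $x\mapsto F(\omega)(x)$ gives $F(\omega)=G(\omega)$ for almost all $\omega$, i.e. $F=G$. Thus $\Phi$ is an embedding, a bijection onto its image, and $\Phi^{-1}$ is well defined on that image.

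The main obstacle is discontinuity of $\Phi^{-1}$, which I would establish by an explicit example. Take $\tpX=[0,1]$, $\tpY=\R$, and $\Omega=\{0,1\}^\omega$ with the fair measure. For $n\in\N$ let $J(\omega)\in\{0,\ldots,2^n-1\}$ be the integer coded by the first $n$ bits of $\omega$, so $P(J=j)=2^{-n}$, and let $F_n(\omega)$ be a fixed continuous triangular bump of height $1$ supported on $[J(\omega)/2^n,(J(\omega)+1)/2^n]$. This is a simple (finitely-valued) continuous random function, hence a genuine element of $\tpRV(\tpX\fto\tpY)$. For each fixed $x$, the event $F_n(\cdot)(x)\neq0$ forces $J(\omega)$ to index the subinterval containing $x$, so $P(F_n(\cdot)(x)\neq0)\leq 2^{1-n}$ and therefore $d(\Phi(F_n)(x),0)\leq 2^{1-n}$ uniformly in $x$; consequently $\Phi(F_n)\to\Phi(0)$ in $\tpX\fto\tpRV(\tpY)$, where $0$ denotes the constant zero random function. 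On the other hand $\|F_n(\omega)\|_\infty=1$ for every $\omega$, so the Fan distance satisfies $d(F_n,0)=1$ for all $n$ and $F_n\not\to0$ in $\tpRV(\tpX\fto\tpY)$. Were $\Phi^{-1}$ continuous, the convergence $\Phi(F_n)\to\Phi(0)$ would force $F_n=\Phi^{-1}(\Phi(F_n))\to\Phi^{-1}(\Phi(0))=0$, a contradiction. Hence $\Phi^{-1}$ is not continuous, which is precisely the sense in which $\tpRV(\tpX\fto\tpY)$ records strictly more information (the joint behaviour across all $x$ simultaneously) than $\tpX\fto\tpRV(\tpY)$ (only the marginal coupling at each $x$ separately).
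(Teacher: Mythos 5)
Your proof is correct and follows essentially the same route as the paper: computability of the forward map via evaluation $\varepsilon_x:(\tpX\fto\tpY)\fto\tpY$ together with Theorem~\ref{thm:imagerandomvariable}, and discontinuity of the inverse via a family of random functions converging to zero in probability at each fixed $x$, uniformly in $x$, while staying at Fan distance $1$ from zero in $\tpRV(\tpX\fto\tpY)$ --- your moving triangular bump on $[0,1]$ is the same device as the paper's cylinder-matching indicator $F(x,\omega,n)=1$ iff $x|_n=\omega|_n$ on $\{0,1\}^\omega$. Your explicit injectivity argument via a countable dense subset of $\tpX$ (and your flagging that $\tpX$ should be compact so that the sup-metric makes $\tpX\fto\tpY$ a metric space) are small supplements the paper leaves implicit.
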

\begin{proof}
For fixed $x$, evaluation $\varepsilon_x:(\tpX\fto\tpY)\fto\tpY:f\mapsto f(x)$ is computable, so by Theorem~\ref{thm:imagerandomvariable}, $
\varepsilon(F):\tpRV(\tpY)$ is computable for any $F:\tpRV(\tpX\fto\tpY)$ given $x$. Hence the function $x\mapsto \varepsilon_x(F)$ is computable.

Conversely, let $X=\{0,1\}^\omega$ and $Y=\{0,1\}$.
Define $F(x,\omega,n)=1$ if $x|_n=\omega|_n$, and $0$ otherwise.
Then for fixed $x$, $F(d(x,\cdot,n),0)=2^{-n}$, so $F(x,\cdot,n)$ converges to $0$ uniformly in $x$.

For fixed $\omega$, $d(F(\cdot,\omega,n_1),F(\cdot,\omega,n_2)) = \sup_{x\in X} d(F(x,\omega,n_1),F(x,\omega,n_2)) = 1$, since (for $n_1<n_2$) there exists $x$ such that $x|_{n_1}=\omega|_{n_1}$ but $x|_{n_2}\neq\omega|_{n_2}$.
Hence $d(F(\cdot,\cdot,n_1),F(\cdot,\cdot,n_2))=1$ for all $n_1,n_2$, and the sequence is not a Cauchy sequence in $\tpRV(\tpX\fto\tpY)$.
\end{proof}

\section{Discrete-Time Stochastic Processes}

A \emph{discrete-time stochastic process} with state space $\tpX$ is a random variable $\seq{X}=(X_0,X_1,X_2,\ldots)$ taking values in $\tpX^\infty$.
%which is \emph{non-anticipative}, i.e. $X_{n+1}$ has a conditional value $X_{n+1}|X_n,\ldots,X_0$.
A \emph{Markov process} is a stochastic process such that $X_{n+1}$ depends only on the previous state $X_n$, so is determined by the conditional value $X_{n+1}|X_n$, such that $X_{n+1}|X_n=x_n$ is independent of $(X_0,X_1,\ldots,X_{n-1})$.
A Markov process is \emph{stationary} if the distributions of $X_{n+1}|X_n$, i.e. $\Pr\bigl((X_{n+1}|X_{n}=x_n)\in U\bigr)$, are equal.
In this case we can write $\Pr(X_{n+1}|X_n) = F_n:\tpX \fto(\Omega\mfto\tpX)$, where $F_n(x,\omega)=F_n(\omega_0,\omega_1,\ldots)=F(\omega_n)$.
Hence the process is defined by $F:\tpX \fto \tpRV(\tpX)$.

Typically, we are only interested in the \emph{distribution} of the states $X_n$, and so rather than treating $X_n$ as a random variable $X_n:\Omega\mfto\tpX$, we consider $X_n\in \tpPr(\tpX)$.
Then the Markov process is defined by $F:\tpX \fto \tpPr(\tpX)$.

When working in Cartesian-closed categories, objects of the form $(\tpX\fto \tpT)\fto \tpT$ for some fixed type $\tpT$ are an example of a \emph{monad}~\cite{Street1972}.
They support standard manipulations which make them ideal for the representation of dynamic systems.
When $\tpT=\tpSi$, the Sierpinski type, we obtain categories of overt and compact sets~\cite{Escardo2004}, which form a basis for discrete-time nondeterministic systems~\cite{Collins2009CIE}.
Since $\tpPr(\tpX)$ is a subtype of $(\tpX\fto \tpHl)\fto \tpHl$ we can take $\tpT=\tpHl$ and obtain the same operators for discrete-time stochastic systems.

\begin{proposition}\label{prop:canonicaloperators}\mbox{}
Let $\tpT$, $\tpX$ and $\tpY$ be elements of the category of computable types.
Then the following operators are computable:
\begin{enumerate}
 \item The embedding of $\tpX$ in $(\tpX\fto\tpT)\fto\tpT$ given by $\delta_x(\phi) = \phi(x)$ for $x\in\tpX$ and $\phi:\tpX\fto\tpT$.
 \item The canonical equivalence between $\tpX\fto((\tpY\fto\tpT)\fto\tpT)$ and $(\tpY\fto\tpT)\fto(\tpX\fto\tpT)$ given by \( F^*\psi(x) = F(x)(\psi) \) for $F:\tpX\fto((\tpY\fto\tpT)\fto\tpT)$ and $\psi:\tpY\fto\tpT$.
 \item An element $f$ of $\tpX\fto\tpY$ lifts to an operator $f_*$ from $(\tpX\fto\tpT)\fto\tpT$ to $((\tpY\fto\tpT)\fto\tpT)$ defined by
\( f_*\mu(\psi) = \mu(\psi\circ f) \) for $\mu:(\tpX\fto\tpT)\fto\tpT$ and $\psi:\tpY\fto\tpT$.
 \item An element $F$ of $\tpX\fto((\tpY\fto\tpT)\fto\tpT)$ lifts to an operator $F_*$ from $(\tpX\fto\tpT)\fto\tpT$ to $((\tpY\fto\tpT)\fto\tpT)$ defined by
\( F_*\mu(\psi) = \mu(\lambda x.\,F(x)(\psi)) . \)
 \item Given $F:(\tpX\fto\tpT)\fto\tpT$ and $G:(\tpY\fto\tpT)\fto\tpT$, the skew-products $F \rtimes G:(\tpX\times\tpY\fto\tpT)\fto\tpT$ defined by
\( (F \rtimes G)(\psi) = F( \lambda x.G(\lambda y.\psi(x,y)) ) . \) and \( (F \ltimes G)(\psi) = G( \lambda y.F(\lambda x.\psi(x,y)) ) . \)
  \par We write $F\times G$ for the product if $F \rtimes G = F \ltimes G$ for all $F,G$ in some restricted class of interest.
\end{enumerate}
\end{proposition}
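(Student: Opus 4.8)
The plan is to observe that each of the listed operators is given by a $\lambda$-expression assembled entirely from the primitives the Cartesian closed structure supplies: evaluation $\epsilon:(\tpX\fto\tpT)\times\tpX\fto\tpT$, the currying isomorphism between $(\tpX\times\tpY)\fto\tpZ$ and $\tpX\fto(\tpY\fto\tpZ)$, the projections and pairing, and the product twist $\tpX\times\tpY\fto\tpY\times\tpX$. Evaluation and currying were already noted to be computable, and from these the composition map $(g,f)\mapsto g\circ f$ and the twist are themselves $\lambda$-definable, hence computable. It therefore suffices to exhibit each operator as a finite composite of these primitives, and I would order the items so that the later ones reuse the earlier constructions.

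For item~(1), I would start from $\epsilon$, precompose with the twist to obtain $\tpX\times(\tpX\fto\tpT)\fto\tpT:(x,\phi)\mapsto\phi(x)$, and curry in the first variable; the result is $x\mapsto(\phi\mapsto\phi(x))$, so the embedding $\delta$ is computable. Item~(2) is the same move one level higher: uncurrying $F$ gives $\tpX\times(\tpY\fto\tpT)\fto\tpT:(x,\psi)\mapsto F(x)(\psi)$, and composing with the twist and currying in the other variable yields $F^*$ with $F^*\psi(x)=F(x)(\psi)$; the inverse equivalence is the symmetric construction, so both directions are computable.

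The remaining items reduce to item~(2) together with composition. For item~(3), precomposition $\psi\mapsto\psi\circ f$ is the computable map $\lambda\psi.\,\psi\circ f$, and $f_*\mu=\mu\circ(\lambda\psi.\,\psi\circ f)$ exhibits $f_*$ as precomposition, computably in $f$. For item~(4) I would use the identity $\lambda x.\,F(x)(\psi)=F^*\psi$, with $F^*$ the transpose from item~(2); then $F_*\mu(\psi)=\mu(F^*\psi)$, i.e.\ $F_*\mu=\mu\circ F^*$, again a composite of computable maps. For item~(5) I would curry $\psi:\tpX\times\tpY\fto\tpT$ to $\hat\psi:\tpX\fto(\tpY\fto\tpT)$, so that $G\circ\hat\psi:\tpX\fto\tpT$ is the map $x\mapsto G(\lambda y.\psi(x,y))$; then $(F\rtimes G)(\psi)=F(G\circ\hat\psi)$ is visibly a composite of currying, composition, and the two functionals, and $F\ltimes G$ is the mirror-image construction.

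There is no deep obstacle: the entire content is that $\lambda$-definability in a Cartesian closed category of computable types entails computability, so once the primitives are fixed each item is a routine composite. The only point demanding care is the type bookkeeping through the twist and currying steps — in items~(2) and~(5) especially, one must check that the variable to be abstracted occupies the correct factor of the product before currying — but this is mechanical.
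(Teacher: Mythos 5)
Your proof is correct and is precisely the argument the paper leaves implicit: the proposition is stated without proof, computability being taken as immediate from the Cartesian closedness of the category of computable types, with evaluation, currying and the canonical equivalence $(\tpX\times\tpY)\fto\tpZ \cong \tpX\fto(\tpY\fto\tpZ)$ as the computable primitives. Your elaboration of each item as a finite composite of evaluation, currying, twist and composition --- in particular reducing items (3)--(5) to the transpose $F^*$ of item (2) plus precomposition --- is exactly the routine $\lambda$-definability verification the paper intends.
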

For the case of set types, the embedding $\tpX\hookrightarrow((\tpX\fto \tpT)\fto \tpT)$ a singleton set; for measures, the point-measure $\delta_x$.
Note that if $F:(\tpX\fto\tpT)\fto\tpT$ and $G:(\tpY\fto\tpT)\fto\tpT$, then in general $F \rtimes G$ and $F \ltimes G$ are not equal.
Equality (i.e. commutativity of the product) does hold in many important cases, including products of measures.
The generalisation to \emph{monads} $\tpMon(\tpX)$ requires canonical operators $\tpX \fto \tpMon(\tpX)$ and $(\tpX\fto\tpMon(\tpY)) \fto (\tpMon(\tpX)\fto\tpMon(\tpY))$.

We now apply the standard push-forward operators of Proposition~\ref{prop:canonicaloperators} to the case of probability measures.
Computability of the operators on $(\tpX\fto\tpHl)\fto\tpHl$ is clear, it remains to check the linearity properties and the unit total measure.

\begin{lemma}
\label{lem:pointmeasure}
There is a computable point-measure operator taking $x\in \tpX$ to $\delta_x \in \tpPr(\tpX)$.
\end{lemma}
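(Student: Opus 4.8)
The plan is to instantiate the point-mass embedding of Proposition~\ref{prop:canonicaloperators}(1) at $\tpT=\tpHl$ and then check that its image lands in the subtype $\tpPr(\tpX)$ of probability valuations. Proposition~\ref{prop:canonicaloperators}(1) already provides a computable map $x\mapsto\delta_x$ from $\tpX$ into $(\tpX\fto\tpHl)\fto\tpHl$ with $\delta_x(\psi)=\psi(x)$, its computability being exactly computability of evaluation in the Cartesian-closed structure. So, as the preceding paragraph anticipates, the only remaining work is to verify the linearity and normalisation conditions, all of which reduce to pointwise identities.

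First I would record linearity: for $a_1,a_2\in\tpHl$ and $\psi_1,\psi_2:\tpX\fto\tpHl$ we have $\delta_x(a_1\psi_1+a_2\psi_2)=(a_1\psi_1+a_2\psi_2)(x)=a_1\psi_1(x)+a_2\psi_2(x)=a_1\delta_x(\psi_1)+a_2\delta_x(\psi_2)$, since addition and scalar multiplication on $\tpX\fto\tpHl$ are computed pointwise. Thus $\delta_x$ is a continuous linear functional, and by Theorem~\ref{thm:lowermeasurevaluation} it induces a valuation $\nu$; concretely, restricting $\delta_x$ to characteristic functions gives $\nu(U)=\delta_x(\chif_U)=\chif_U(x)$, which is $1$ when $x\in U$ and $0$ otherwise.

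Next I would verify the clauses of Definition~\ref{defn:valuation} for $\nu$. The condition $\nu(\emptyset)=\chif_\emptyset(x)=0$ is immediate, and modularity follows by evaluating the pointwise identity $\chif_U+\chif_V=\chif_{U\cup V}+\chif_{U\cap V}$ at $x$ and using the linearity just established. Normalisation is $\nu(\tpX)=\chif_\tpX(x)=1$, a computable real, so $\delta_x$ is an effectively finite valuation of total mass $1$, i.e.\ an element of $\tpPr(\tpX)$; computability of the whole operator is inherited from Proposition~\ref{prop:canonicaloperators}.

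I do not expect a genuine obstacle: each verification is a pointwise computation. The only point needing slight care is the bookkeeping between the linear-functional presentation $(\tpX\fto\tpHl)\fto\tpHl$ used by Proposition~\ref{prop:canonicaloperators} and the valuation presentation of $\tpPr(\tpX)$, which is precisely the content of Theorem~\ref{thm:lowermeasurevaluation} and guarantees that checking linearity and unit total mass on the functional side is enough.
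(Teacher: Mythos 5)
Your proposal is correct and follows essentially the same route as the paper: define $\delta_x(\psi)=\psi(x)$, obtain computability from the canonical operators of Proposition~\ref{prop:canonicaloperators}, verify linearity by the pointwise computation $\delta_x(a_1\psi_1+a_2\psi_2)=a_1\delta_x(\psi_1)+a_2\delta_x(\psi_2)$, and check unit total mass via the constant function $1$. Your extra step of unwinding the valuation axioms through Theorem~\ref{thm:lowermeasurevaluation} merely makes explicit the bookkeeping the paper leaves implicit.
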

\begin{proof}
For $\psi:\tpX\fto \tpHl$, define $\delta_x(\psi) = \psi(x)$.
Then $\delta_x(\alpha_1\psi_1+\alpha_2\psi_2)=(\alpha_1\psi_1+\alpha_2\psi_2)(x)=\alpha_1\psi_1(x)+\alpha_2\psi_2(x)=\alpha_1\delta_x(\psi_1)+\alpha_2\delta_x(\psi_2)$, and if $\psi\equiv 1$, then $\delta_x(\psi) = 1$, so $\delta_x$ is a probability measure.
\end{proof}

\begin{proposition}
\label{prop:pushforward}
There is a computable push-forward operator taking a function $F:\tpX\fto \tpPr(\tpY)$ and $\mu \in \tpPr(\tpX)$ to the push-forward distribution $F_*\mu \in \tpPr(\tpY)$ is computable.
\end{proposition}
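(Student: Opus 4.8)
The plan is to realise the push-forward simply as the lifting operator $F_*$ of Proposition~\ref{prop:canonicaloperators}(4) specialised to $\tpT=\tpHl$, and then to verify that the result genuinely lies in the subtype $\tpPr(\tpY)$. Concretely, I would set $F_*\mu(\psi) = \mu(\lambda x.\,F(x)(\psi))$ for $\psi:\tpY\fto\tpHl$. Since $\tpPr(\tpX)$ and $\tpPr(\tpY)$ are subtypes of $(\tpX\fto\tpHl)\fto\tpHl$ and $(\tpY\fto\tpHl)\fto\tpHl$ respectively, and Proposition~\ref{prop:canonicaloperators} already supplies computability of $F_*$ as an operator $((\tpX\fto\tpHl)\fto\tpHl)\fto((\tpY\fto\tpHl)\fto\tpHl)$, the map $(F,\mu)\mapsto F_*\mu$ is automatically computable into this larger functional type. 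The only preliminary point is that $\lambda x.\,F(x)(\psi)$ is a genuine element of $\tpX\fto\tpHl$, which holds because $F$ is continuous into $\tpPr(\tpY)$ and evaluation at $\psi$ is computable, so that $\mu$ may legitimately be applied to it.

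The substance of the argument is then the two verifications flagged in the preceding paragraph: linearity and unit total mass. For linearity, I would fix $\alpha_1,\alpha_2\in\tpHl$ and $\psi_1,\psi_2:\tpY\fto\tpHl$ and use that each $F(x)\in\tpPr(\tpY)$ is itself linear, so that $F(x)(\alpha_1\psi_1+\alpha_2\psi_2)=\alpha_1 F(x)(\psi_1)+\alpha_2 F(x)(\psi_2)$ pointwise in $x$. This identifies $\lambda x.\,F(x)(\alpha_1\psi_1+\alpha_2\psi_2)$ with $\alpha_1(\lambda x.\,F(x)(\psi_1))+\alpha_2(\lambda x.\,F(x)(\psi_2))$, and applying linearity of $\mu$ yields $F_*\mu(\alpha_1\psi_1+\alpha_2\psi_2)=\alpha_1 F_*\mu(\psi_1)+\alpha_2 F_*\mu(\psi_2)$.

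For the total mass, I would take the constant function $\psi\equiv 1$ on $\tpY$. Since each $F(x)$ is a probability measure we have $F(x)(\psi)=1$ for every $x$, so $\lambda x.\,F(x)(\psi)$ is the constant function $1$ on $\tpX$; since $\mu$ is also a probability measure, evaluating it on this constant function gives $F_*\mu(\psi)=1$. Hence $F_*\mu$ is a linear functional of unit total mass, i.e.\ an element of $\tpPr(\tpY)$, which completes the argument.

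I do not expect a genuine obstacle here: both the construction and its computability are inherited directly from the monadic lifting operator of Proposition~\ref{prop:canonicaloperators}, so the only real content is the routine confirmation that probability valuations are closed under this lifting. The single point that deserves a little care is the interchange implicit in pushing the scalars $\alpha_i$ and the $\lambda$-abstraction through both $F(x)$ and $\mu$; this is justified entirely by the separate linearity of each $F(x)$ and of $\mu$, and needs no limiting or Fubini-type argument.
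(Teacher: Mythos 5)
Your proposal is correct and matches the paper's own proof essentially step for step: both realise the push-forward as the monadic lifting $F_*\mu(\psi)=\mu(\lambda x.\,F(x)(\psi))$ from Proposition~\ref{prop:canonicaloperators}, inherit computability from it, and verify membership in $\tpPr(\tpY)$ by checking linearity (via linearity of each $F(x)$ and of $\mu$) and unit total mass (via $\psi\equiv 1$). Nothing further is needed.
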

\begin{proof}
For $\psi:\tpY\fto \tpT$, we have $F_*\mu(\psi) = \mu(\lambda x.F(x)(\psi))$ is computable. We need to check that $F_*\mu$ is a probability measure.
It is easy to verify that $F^*(\alpha_1\psi_1+\alpha_2\psi_2) = \alpha_1F^*(\psi_1)+\alpha_2F^*(\psi_2)$, and hence $F_*\mu(\alpha_1\psi_1+\alpha_2\psi_2)=\alpha_1 F_*\mu(\psi_1) + \alpha_2 F_*\mu(\psi_2)$.
If $\psi \equiv 1$, then $\phi = F^*\psi = \lambda x.F(x)(\psi) \equiv 1$ since for all $x\in \tpX$, $\phi(x)=F(x)(\psi)$ and $F(x)$ is a probability measure.
Then $\mu(\phi)=1$ as $\mu$ is a probability measure.
\end{proof}

\begin{corollary}
If $f:\tpX\fto \tpY$, then $f$ induces a computable operator $f_*:\tpPr(\tpX)\fto\tpPr(\tpY)$ by $f_*\mu = F_*\mu$ where $F(x) = \delta_{f(x)}$.
Explicitly, $f_*\mu(\psi) = \mu(\psi\circ f)$ for $\psi:\tpY\fto\H$.
\end{corollary}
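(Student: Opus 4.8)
The plan is to exhibit $f_*$ as the composite of two operators that have already been shown to be computable, namely the point-measure operator of Lemma~\ref{lem:pointmeasure} and the push-forward operator of Proposition~\ref{prop:pushforward}. First I would compose $f$ with the point-measure operator $\tpY\fto\tpPr(\tpY)$, $y\mapsto\delta_y$, to obtain the map $F:\tpX\fto\tpPr(\tpY)$ given by $F(x)=\delta_{f(x)}$. Since the point-measure operator is computable (hence continuous) and $f$ is continuous, their composite $F=\delta_{(\cdot)}\circ f$ is continuous, and computable from a name of $f$, because composition is a computable operation in the Cartesian closed category of computable types. Thus $F$ is a legitimate argument for the push-forward operator.

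Next I would apply Proposition~\ref{prop:pushforward} to $F$, which yields a computable operator $F_*:\tpPr(\tpX)\fto\tpPr(\tpY)$ and guarantees that $F_*\mu$ is again a probability measure. Setting $f_*=F_*$ then gives the desired computable operator $\tpPr(\tpX)\fto\tpPr(\tpY)$, with the probability-measure property inherited directly from Proposition~\ref{prop:pushforward}.

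It then remains only to verify the explicit formula, which follows by unwinding definitions. For $\psi:\tpY\fto\tpHl$,
\[ f_*\mu(\psi) = F_*\mu(\psi) = \mu\bigl(\lambda x.\,F(x)(\psi)\bigr) = \mu\bigl(\lambda x.\,\delta_{f(x)}(\psi)\bigr) = \mu\bigl(\lambda x.\,\psi(f(x))\bigr) = \mu(\psi\circ f), \]
where the fourth equality uses the identity $\delta_z(\psi)=\psi(z)$ from Lemma~\ref{lem:pointmeasure}.

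I expect no genuine obstacle here, as the statement is a direct specialisation of the push-forward of Proposition~\ref{prop:pushforward} along the particular choice $F(x)=\delta_{f(x)}$. The only points requiring (minor) care are confirming that $F=\delta_{(\cdot)}\circ f$ is continuous, so that it is a valid argument for the push-forward operator, and noting that $\psi\circ f$ lies in $\tpX\fto\tpHl$ (by continuity of $f$ and $\psi$) so that $\mu$ may legitimately be evaluated on it.
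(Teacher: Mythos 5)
Your proposal is correct and follows exactly the route the paper intends: the corollary itself names the decomposition $f_*\mu = F_*\mu$ with $F(x)=\delta_{f(x)}$, so the (implicit) proof is precisely your composition of the point-measure operator of Lemma~\ref{lem:pointmeasure} with the push-forward of Proposition~\ref{prop:pushforward}, followed by the same unwinding $F_*\mu(\psi)=\mu(\lambda x.\,\delta_{f(x)}(\psi))=\mu(\psi\circ f)$. Your added care about continuity of $F=\delta_{(\cdot)}\circ f$ and of $\psi\circ f$ is sound and merely makes explicit what the paper leaves tacit.
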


\begin{proposition}\label{prop:pushforwardjoint}
The push-forward operator taking a function $F:\tpX\fto\tpPr(\tpY)$ and probability measure $\mu\in\tpPr(\tpX)$ to the joint distribution $(\mu,F_*\mu)=(\id\rtimes F)_*\mu$ on $\tpX\times\tpY$ is computable
\end{proposition}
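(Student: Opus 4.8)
The plan is to recognise $(\id\rtimes F)_*\mu$ as an instance of the push-forward of Proposition~\ref{prop:pushforward}, applied to the kernel $G := \id\rtimes F : \tpX\fto\tpPr(\tpX\times\tpY)$ that sends $x$ to the product measure $\delta_x\times F(x)$. Here $\id$ is read as the unit kernel $x\mapsto\delta_x$, so $G(x)$ is the law of the pair obtained by placing all mass at $x$ in the first coordinate and sampling the second from $F(x)$. Once $G$ is shown to be a computable map into $\tpPr(\tpX\times\tpY)$, Proposition~\ref{prop:pushforward} immediately gives that $G_*\mu = (\id\rtimes F)_*\mu$ is a computable probability measure on $\tpX\times\tpY$.

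First I would show that $G$ is computable. The unit kernel $x\mapsto\delta_x$ is computable by Lemma~\ref{lem:pointmeasure}, the function $F$ is given, and the product of two probability measures is computable as the (commuting) skew-product of Proposition~\ref{prop:canonicaloperators}. Since pairing, composition and currying are computable in the Cartesian closed category of computable types, the pointwise product $x\mapsto\delta_x\times F(x)$ is a computable element of $\tpX\fto\tpPr(\tpX\times\tpY)$. Unwinding the resulting functional, for $\psi:\tpX\times\tpY\fto\tpHl$ one obtains
\[ (\id\rtimes F)_*\mu(\psi) = \mu\bigl(\lambda x.\,(\delta_x\times F(x))(\psi)\bigr) = \mu\bigl(\lambda x.\,F(x)(\lambda y.\,\psi(x,y))\bigr), \]
using $(\delta_x\times\nu)(\psi) = \nu(\lambda y.\,\psi(x,y))$.

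It then remains to check that this functional is a probability measure, which is exactly Proposition~\ref{prop:pushforward} specialised to $G$: linearity in $\psi$ follows since $\psi\mapsto\lambda y.\psi(x,y)$ is linear and each $F(x)$ and $\mu$ are linear, while taking $\psi\equiv 1$ gives $F(x)(\lambda y.1)=1$ for every $x$ and hence $\mu(\lambda x.1)=1$, so the total mass is $1$. The only genuine work lies in the first step, and even there the main obstacle is purely bookkeeping: confirming that the product-of-measures operator with one argument a point measure is computable uniformly in $x$, and that the currying and pairing needed to assemble $x\mapsto\delta_x\times F(x)$ stay inside the computable fragment. Both are guaranteed by Proposition~\ref{prop:canonicaloperators} and the Cartesian closed structure, so no new estimate is required and the proposition is essentially a corollary of Lemma~\ref{lem:pointmeasure} together with Propositions~\ref{prop:canonicaloperators} and~\ref{prop:pushforward}.
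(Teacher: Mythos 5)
Your proof is correct and takes essentially the same route as the paper: both define the kernel $\tpX\fto\tpPr(\tpX\times\tpY)$ whose value at $x$ acts by $\psi\mapsto F(x)(\lambda y.\,\psi(x,y))$, check it has total mass $1$, and conclude by Proposition~\ref{prop:pushforward}. Your factoring of this kernel as $x\mapsto\delta_x\times F(x)$ via Lemma~\ref{lem:pointmeasure} and the skew-product of Proposition~\ref{prop:canonicaloperators} is merely a more structured way of establishing the computability that the paper asserts directly from the Cartesian closed structure, and your unwinding recovers exactly the paper's defining formula.
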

\begin{proof}
We have $F:\tpX\fto ( (\tpY\fto \tpHl) \fto \tpHl)$ and $\mu:( (\tpX\fto \tpHl) \fto \tpHl)$.
Define $(\id \rtimes F)$ to be the function $\tpX\fto\tpPr(\tpX\times \tpY)$ given by $(\id \rtimes F)(x)(\psi) = F(x)(\lambda y.\psi(x,y))$.
Note that if $\psi:\tpX\times \tpY\fto \tpHl$ is the constant function $1$, then $\lambda y.\psi(x,y) \equiv 1$, so $(\id \rtimes F)(x)(\psi) = F(x)(\phi) = 1$ since $F(x)$ is a probability distribution.
Then by Proposition~\ref{prop:pushforward}, $(\id\times F)_*\mu$ is computable in $\tpPr(\tpX\times \tpY)$.
\end{proof}

We first consider the simplest approach to stochastic processes, where we only compute the \emph{distribution} of the states.
A Markov process is then defined by a stochastic update rule $F$ for states of a dynamic system.
Given $x\in \tpX$, the probability distribution of the next state is $F(x)$.
Denoting the state at time $n$ by a random variable $\tpX_n$, a Markov process can be written $F(x)(U) = \Pr( \tpX_{n+1} \in U \mid \tpX_n = x \}$.

\begin{definition}
The type of simple Markov processes on a type $\tpX$ is $\tpX \fto \tpPr(\tpX)$.
\end{definition}
Since a continuous function $f:\tpX\fto \tpY$ induces a natural operator $F:\tpX\fto \tpPr(\tpY)$ by $F(x)(\psi) = \psi(f(x))$ for $\psi:\tpY\fto \H$, any deterministic system can be seen as a stochastic system.

%A push-forward operator $f_*:\tpPr(\tpX)\fto\tpPr(\tpY)$ in a straightforward way by $f_*\mu(V) = \mu(f^{-1}(V))$
The main result on Markov processes is that given the probability distribution $\mu_0$ of the state $x_0$ at time $0$, we can compute the joint probability distributions up to time $n$.
\begin{theorem}
Let $F:\tpX\fto\tpPr(\tpX)$ be a Markov process.
Then given a probability distribution $\mu_0$ of the initial state $x_0$, the probability distributions $\mu_n$ of the state $x_n$ at time $n$, and the joint probability distribution $\gamma_{n}$ of the states $(x_0,\ldots,x_n)$ up to time $n$, are computable.
\end{theorem}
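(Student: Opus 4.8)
The plan is to construct both families $(\mu_n)$ and $(\gamma_n)$ by a finite induction on $n$, using the one-step push-forward operators of Proposition~\ref{prop:pushforward} and Proposition~\ref{prop:pushforwardjoint} as the inductive step. The whole argument reduces to observing that each such step is a composition of operators already shown to be computable, and that a finite composition of computable maps is computable.

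First I would handle the marginals. Set $\mu_{n+1} = F_*\mu_n$, where $F_*:\tpPr(\tpX)\fto\tpPr(\tpX)$ is the push-forward of the transition kernel; explicitly $F_*\mu(\psi) = \mu(\lambda x.\,F(x)(\psi))$, which is exactly the law of the next state obtained by averaging $F$ against the current law $\mu_n$. Since $F_*$ is computable by Proposition~\ref{prop:pushforward}, iterating it from the given $\mu_0$ yields each $\mu_n$ computably.

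For the joint distributions I would argue inductively, taking $\gamma_0 = \mu_0\in\tpPr(\tpX)$ as the base case. Given $\gamma_n\in\tpPr(\tpX^{n+1})$, let $\pi_n:\tpX^{n+1}\fto\tpX$ be the projection onto the last coordinate and set $\tilde F_n = F\circ\pi_n:\tpX^{n+1}\fto\tpPr(\tpX)$; this is computable because projections and $F$ are, and it encodes the rule ``advance the history by one step, conditioning only on the current state''. Applying Proposition~\ref{prop:pushforwardjoint} with source type $\tpX^{n+1}$ and target type $\tpX$ gives the computable operator $(\id\rtimes\tilde F_n)_*:\tpPr(\tpX^{n+1})\fto\tpPr(\tpX^{n+1}\times\tpX)$, and I would define
\[ \gamma_{n+1} = (\id\rtimes\tilde F_n)_*\gamma_n \in \tpPr(\tpX^{n+2}) . \]
Each step is again a composition of computable maps, so $\gamma_n$ is computable for every $n$; as a consistency check one has $\mu_n = (\pi_n)_*\gamma_n$.

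The only step requiring genuine care — rather than an appeal to the machinery — is verifying that $(\id\rtimes\tilde F_n)_*\gamma_n$ really is the correct joint law, i.e. that conditioning on the last coordinate alone, rather than on the whole history, reproduces the Markov distribution. This is precisely the Markov property: by definition $X_{n+1}\mid X_n=x_n$ is independent of $(X_0,\ldots,X_{n-1})$, so the conditional law of $x_{n+1}$ given the full tuple $(x_0,\ldots,x_n)$ equals $F(x_n)=\tilde F_n(x_0,\ldots,x_n)$. Since $(\id\rtimes\tilde F_n)$ retains the entire tuple $(x_0,\ldots,x_n)$ while appending a fresh sample drawn from $\tilde F_n$ applied to it, the resulting measure is exactly the joint law of $(x_0,\ldots,x_{n+1})$. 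The growth of the type $\tpX^{n+1}$ with $n$ poses no difficulty, since for each fixed $n$ we compose only finitely many computable operators.
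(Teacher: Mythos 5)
Your proposal is correct and is essentially the paper's own proof: the paper computes $\mu_n = F_*\mu_{n-1}$ via Proposition~\ref{prop:pushforward} and $\gamma_n = (\id\rtimes F)_*\gamma_{n-1}$ via Proposition~\ref{prop:pushforwardjoint}, exactly as you do. Your explicit introduction of $\tilde{F}_n = F\circ\pi_n$ merely spells out what the paper's notation $(\id\rtimes F)_*$ leaves implicit (that $F$ acts on the last coordinate of $\tpX^{n+1}$), and your verification of the Markov property is a welcome, if routine, addition.
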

\noindent
The proof is trivial given the categorical constructions of Proposition~\ref{prop:canonicaloperators}:
\begin{proof}
Compute $\mu_{n} \in \tpPr(\tpX)$ recursively by $\mu_{n}=F_*\mu_{n-1}$, which are computable by Proposition~\ref{prop:pushforward}.
Compute the joint distributions $\gamma_{n} \in \tpPr(\tpX^{n+1})$ recursively by $\gamma_0 = \mu_0$ and $\gamma_{n}=(\id \rtimes F)_*\gamma_{n-1}$, which are computable by Proposition~\ref{prop:pushforwardjoint}.
\end{proof}
\noindent
Note that $\mu_n = (\pi_n)_*\gamma_n$, where $\pi_n:\tpX^{n+1}\fto \tpX$ is given by $\pi_n(x_0,\ldots,x_n)=x_n$; in other words, the discribution at time $n$ can be extracted from the joint distribution up to time $n$.

\vspace{\baselineskip}
\noindent
We can also consider the state as a random variable on the base probability space $\Omega$.
This approach yields a random variable $X_n$ for the state at time $n$.
\begin{definition}
A parameterised Markov process on a type $\tpX$ is defined by a conditional random variable $F:\tpX \fto \tpRV(\tpX)$ and a random variable $X_0:\tpRV(\tpX)$.
\end{definition}
Given a parameterised Markov process, we can trivially extract the distribution of $X_0$ and the conditional distribution function $\Phi:\tpX\fto\tpPr(\tpX)$ by $\Phi(x)(\psi) = \Pr( \lambda\omega.\psi(F(x,\omega)) )$.

The following result shows that a parameterised Markov process gives rise to random variables $X_0,X_1,X_2,\ldots$ over the probability space $(\Omega,P)^\omega$.
\begin{theorem}
If $F:\tpX \fto(\Omega \mfto \tpX)$ is a parameterised Markov process, and $X_{\init}:\Omega \mfto \tpX$ is a random variable giving the initial probability distribution, then we can compute the stochastic process $(X_0,X_1,X_2,\ldots,)$ as a random variable $\Omega \mfto \tpX^{\infty}$.
%Alternatively, we can consider each $X_n$ as a random variable over the countable product $(\Omega,P)^{\omega}$ with $\Omega^\omega = \Omega_0 \times \Omega_1 \times \cdots$, and $X_n$ is independent of $\Omega_m$ for $m>n$.
\end{theorem}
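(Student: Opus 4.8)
The plan is to realise the process on the countable power $\Omega^\omega=(\{0,1\}^\omega)^\omega$, driving each transition by a fresh coordinate, and then transport the result back to $\Omega$ along the computable measure-preserving homeomorphism $\Omega^\omega\cong\Omega$ obtained by interleaving coordinates (which sends the product measure to the standard measure). Write $\omega=(\omega^{(0)},\omega^{(1)},\ldots)$ for a point of $\Omega^\omega$, let $\pi_n:\Omega^\omega\fto\Omega$ be the $n$-th coordinate projection, and let $\calX_n$ be the sub-topology generated by $\pi_0,\ldots,\pi_n$, so that $\calX_0\subset\calX_1\subset\cdots$ is an increasing filtration. I would construct random variables $X_n:\Omega^\omega\mfto\tpX$ by recursion on $n$, maintaining the invariant that each $X_n$ is $\calX_n$-measurable and that the transition from $X_n$ to $X_{n+1}$ uses only the coordinate $\omega^{(n+1)}$.

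For the recursion, set $X_0=X_{\init}\circ\pi_0$, which is $\calX_0$-measurable. Given $X_n\in\tpRV_{\calX_n}(\tpX)$, define $F^{(n+1)}:\tpX\fto\tpRV(\tpX)$ by $F^{(n+1)}(x)=F(x)\circ\pi_{n+1}$; since $F(x)$ depends only on the coordinate $\omega^{(n+1)}$ while $\calX_n$ is generated by $\pi_0,\ldots,\pi_n$, each $F^{(n+1)}(x)$ is independent of $\calX_n$, so that $F^{(n+1)}$ is a continuous map $\tpX\fto\tpRV_{\perp\calX_n}(\tpX)$. Applying the conditioning operator of Proposition~\ref{prop:conditionalrandomvariable} with $\calX=\calX_n$, $X=X_n$ and $Y|=F^{(n+1)}$ then yields a computable $X_{n+1}\in\tpRV(\tpX)$ extending $\omega\mapsto F(X_n(\omega))(\omega^{(n+1)})$, exactly the intended Markov recursion. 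Because $X_{n+1}$ is built from the $\calX_n$-measurable $X_n$ and the single coordinate $\omega^{(n+1)}$, it is $\calX_{n+1}$-measurable, closing the induction. Each step is one fixed computable operation in $F$ and $X_n$, and this operation is uniform in $n$, so the map $n\mapsto X_n$ is computable and produces a computable sequence $(X_n)_{n\in\N}$ in $\tpRV(\tpX)$.

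It remains to assemble the $X_n$ into a single random variable valued in $\tpX^\infty$. Equip $\tpX^\infty$ with the Polish product metric $d^\infty(\vec x,\vec y)=\sum_{n\in\N}2^{-n-1}\min(d(x_n,y_n),1)$. For each $N$ form the finite product $P_N=(X_0,\ldots,X_{N-1})\in\tpRV(\tpX^N)$, computable by iterating Theorem~\ref{thm:productrandomvariable}, and push it forward along the continuous map $\iota_N:\tpX^N\fto\tpX^\infty$, $(x_0,\ldots,x_{N-1})\mapsto(x_0,\ldots,x_{N-1},x_{N-1},x_{N-1},\ldots)$, using Theorem~\ref{thm:imagerandomvariable} to obtain $\iota_N(P_N)\in\tpRV(\tpX^\infty)$. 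Since $\iota_N(P_N)$ and $\iota_M(P_M)$ agree on the first $N$ coordinates for $M>N$, the product metric gives $d^\infty(\iota_M(P_M)(\omega),\iota_N(P_N)(\omega))\leq 2^{-N}$ pointwise, hence a Fan distance at most $2^{-N}$; thus $(\iota_N(P_N))_{N\in\N}$ is an effective Cauchy sequence, whose limit $\vec X\in\tpRV(\tpX^\infty)$ is the desired process. Applying the $n$-th coordinate projection together with Theorem~\ref{thm:imagerandomvariable} confirms that the $n$-th component of $\vec X$ is $X_n$.

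The main obstacle is the bookkeeping in the recursive step: one must maintain the invariant that $X_n$ is $\calX_n$-measurable while each kernel $F^{(n+1)}$ is $\calX_n$-independent, so that the hypotheses of Proposition~\ref{prop:conditionalrandomvariable} are met at every stage, and one must verify that this step is uniformly computable in $n$ so that the recursion yields a genuine computable sequence. By comparison the final assembly into $\tpX^\infty$ is routine, amounting to the fact that the completion $\tpRV(\cdot)$ is closed under effective countable products because the product metric damps the contribution of late coordinates.
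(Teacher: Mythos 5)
Your proposal is correct and follows essentially the same route as the paper: the paper's proof also realises the process on the countable product $\Omega_0\times\Omega_1\times\cdots$, defines $X_0=X_{\init}(\omega_0)$ and $X_n = F(X_{n-1}(\omega_0,\ldots,\omega_{n-1},\ldots))(\omega_n)$ recursively, and appeals to the computability of random variables from conditional random variables (Theorem~\ref{thm:conditionalrandomvariable}), noting that $X_n$ depends only on $(\omega_0,\ldots,\omega_n)$ --- exactly your filtration invariant. Your additional steps (verifying the $\calX_n$-measurability/$\calX_n$-independence hypotheses explicitly, assembling the sequence into $\tpRV(\tpX^\infty)$ via an effective Cauchy limit in the product metric, and transporting back along the interleaving isomorphism $\Omega^\omega\cong\Omega$) merely make explicit details the paper leaves implicit, and are carried out correctly.
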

\begin{proof}
Let $\Omega_i$ be a copy of $\Omega$ for each $i\in\N$, and define $X_n:\Omega_0\times\Omega_1\times\cdots\times\Omega_n\times\cdots \fto \tpX$ recursively by $X_0(\omega_0,\omega_1,\omega_2,\ldots)=X_{\init}(\omega_0)$ and $X_n(\omega_0,\omega_1,\ldots,\omega_n,\ldots) = F(X_{n-1}(\omega_0,\omega_1,\ldots,\omega_{n-1},\ldots))(\omega_n)$.
Then each $X_n$ is computable by computability of random variables from conditional random variables given by Theorem~\ref{thm:conditionalrandomvariable}.
Further, $X_n$ is dependent on    $(\omega_0,\omega_1,\ldots,\omega_n)$ only.
\end{proof}

%The following lemma shows that a Markov process with continuous dependence on $\tpX$ can be represented as a random variable.
%NOT TRUE!
%\begin{lemma}
%et $F:\tpX\mapsto \tpPr(\tpY)$ be a parametrised probability distribution.
%Then there exists a probability space $(\Omega,P)$ and a continuous function $G:\subset \tpX\times \Omega\rightarrow \tpY$ such that for all $x$, $E_{F(x)}(\psi) = E_\Pr( \omega \mapsto G(x,\omega) )$.
%\end{lemma}

\section{The Wiener process}

The Wiener process $W(t)$ or $W_t$ is a random process such that $W(0)=0$, the distribution function $t\mapsto W(t)$ is almost surely continuous in the weak topology, and $W(t)$ has independent increments with $W(t)-W(s) \sim N(0,t-s)$ for $0 \leq s < t$, where $N(\mu, \sigma^2)$ is the normal distribution with mean $\mu$ and variance $\sigma^2$.
The Wiener process is used in the definition of a stochastic differential equation
\[ dX(t) = f(X(t),t)\, dt + g(X(t),t)\, dW(t) . \]
There are many comprehansive books available for continuous-time stochastic processes, notably~\cite{Friedman1975,Evans2013}
%where $W(t)$ is a Wiener process (Brownian motion).
%The integral form of a stochastic differential equation is
%\[ X(t+s) - X(t) = \int_t^{t+s} f(X(\tau),\tau) \,d\tau + \int_t^{t+s} g(X(\tau),\tau)\, dW(\tau) . \]

\begin{theorem}
A sample path of the Wiener process is almost-surely $\alpha$-\Holder\ continuous for all $\alpha<1/2$.
\end{theorem}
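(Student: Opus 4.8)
The plan is to establish the classical Kolmogorov continuity theorem in an effective form and apply it to the Gaussian moment bounds of the Wiener increments. The key fact is that for the Wiener process, the increment $W(t)-W(s)$ is distributed as $N(0,t-s)$, so for every even integer $p=2k$ we have the explicit moment identity $\Ex(|W(t)-W(s)|^{p}) = c_p\,|t-s|^{p/2}$, where $c_p = (p-1)!!$ is the $p$-th absolute moment of a standard normal. This moment bound is exactly the hypothesis of Kolmogorov's criterion: if $\Ex(|W(t)-W(s)|^{p}) \leq C\,|t-s|^{1+\beta}$ for some $\beta>0$, then the path is almost-surely $\alpha$-\Holder\ continuous for every $\alpha<\beta/p$. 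Here $1+\beta = p/2$ gives $\beta = p/2-1$, so $\beta/p = 1/2 - 1/p$, which tends to $1/2$ as $p\to\infty$. Thus by choosing $p$ large enough we obtain $\alpha$-\Holder\ continuity for any prescribed $\alpha<1/2$.

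First I would fix a target exponent $\alpha<1/2$ and select an even integer $p$ large enough that $\alpha < 1/2 - 1/p$. Then I would carry out the dyadic argument underlying Kolmogorov's theorem: restrict attention to dyadic times $t = j2^{-n}$ in $[0,1]$, and for each level $n$ control the quantity $M_n = \max_{j} |W((j+1)2^{-n}) - W(j2^{-n})|$. Using the moment bound and a Markov (Chebyshev) inequality of the form $\Pr(|W(t)-W(s)| > \lambda) \leq \lambda^{-p}\,\Ex(|W(t)-W(s)|^p)$, together with a union bound over the $2^n$ intervals, one shows that $\Pr(M_n > 2^{-\gamma n})$ is summable in $n$ whenever $\gamma < \beta/p = 1/2 - 1/p$. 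By the Borel--Cantelli argument (which here is effective, since the probabilities are explicitly summable with a computable rate), almost surely $M_n \leq 2^{-\gamma n}$ for all sufficiently large $n$. A standard chaining estimate then upgrades the control on consecutive dyadic increments to a uniform \Holder\ bound: any two dyadic points at distance $\leq 2^{-n}$ can be joined by a telescoping dyadic path whose increments are summed geometrically, yielding $|W(t)-W(s)| \leq K\,|t-s|^{\gamma}$ on dyadic times, and continuity of the sample path (almost-sure continuity in the weak topology, from the defining properties) extends the bound to all $t,s$.

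To match the effective framework of the paper, I would phrase the almost-sure statements using the notion of a full-measure $G_\delta$ set from Definition~\ref{defn:almostsurelycontinuousrandomvariable}: the set of sample paths satisfying the \Holder\ bound for a given $\gamma$ is obtained as a countable intersection of events each of full measure in the limit, and the effective summability of the tail probabilities makes the convergence rate computable. The chaining constant $K$ and the threshold level $n_0$ beyond which $M_n \leq 2^{-\gamma n}$ are random but almost-surely finite, so the path lies in $\bigcup_{K} \{$paths with constant $K\}$, a full-measure set.

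The main obstacle is not the classical probability, which is routine, but ensuring that every estimate is carried out effectively and that the Gaussian moments and tail bounds are computable in the sense of the earlier sections. In particular, the Chebyshev inequality must be applied in the computable direction: $\Pr(|W(t)-W(s)| \geq \lambda)$ is computable only in $[0,1]_>$, so the union bound gives an \emph{upper} bound on the exceptional probability, which is exactly what the Borel--Cantelli step requires. The delicate point is that $\alpha$-\Holder\ continuity for \emph{all} $\alpha<1/2$ is obtained not uniformly but by exhausting a countable sequence of exponents $\alpha_m \nearrow 1/2$, taking a countable intersection of the corresponding full-measure sets; since a countable intersection of full-measure sets is full measure by the convention in Definition~\ref{defn:almostsurelycontinuousrandomvariable}, the conclusion holds almost surely for every $\alpha<1/2$ simultaneously.
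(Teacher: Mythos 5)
Your proof is correct, and it is worth noting at the outset that the paper itself states this theorem \emph{without} proof, as a classical fact (see~\cite{Friedman1975,Evans2013}); so there is nothing in the text to match step-for-step. What you give is the standard Kolmogorov--Chentsov argument: the Gaussian moment identity $\Ex(|W(t)-W(s)|^{p})=(p-1)!!\,|t-s|^{p/2}$, Chebyshev plus a union bound over the $2^n$ dyadic increments (giving $\Pr(M_n>2^{-\gamma n})\leq c_p\,2^{n(1+\gamma p-p/2)}$, summable precisely when $\gamma<1/2-1/p$), Borel--Cantelli with an explicitly computable rate, dyadic chaining, extension to all times by the almost-sure continuity built into the definition, and finally a countable intersection over exponents $\alpha_m\nearrow 1/2$ --- all of which is sound. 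The route implicit in the paper is different: in the subsequent computable construction, \Holder\ regularity is extracted from the \Levy-Ciesielski expansion itself, by using Borel--Cantelli on the coefficients to ensure $|A_{n,k}|<n$ for all $n$ beyond an almost-surely finite threshold, and then exploiting $\sup_{t\in[0,1]}s_{n,k}(t)=2^{-n/2}$ together with the disjoint supports of the Schauder functions at each level, so that the tail $\sum_{n>m} n\,2^{-n/2}$ controls the modulus of continuity directly and yields paths in $C^\alpha$ for every $\alpha<1/2$. Your moment-and-chaining route is more general --- it needs only increment moment bounds, so it applies verbatim to processes with no explicit wavelet representation (e.g.\ solutions of stochastic differential equations) --- whereas the coefficient-decay route is tailored to the construction at hand and meshes better with the paper's computability claims, since the eventual bound on the $A_{n,k}$ is exactly the finitary information the modified construction makes available. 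Two small points to tighten in your effectivization: the event $\{M_n\leq 2^{-\gamma n}\}$ is closed rather than open, so to package the conclusion as a full-measure $G_\delta$ in the sense of Definition~\ref{defn:almostsurelycontinuousrandomvariable} you should phrase the good events with strict inequalities on the finite maxima (their complements then have probability computable in $[0,1]_>$, in the direction you correctly identify for the Chebyshev step); and since the \Holder\ constant $K$ is random, the set of $\gamma$-\Holder\ paths is a countable union of closed sets rather than itself a $G_\delta$ --- harmless for the almost-sure statement as given, but worth making explicit if you intend the sample path to be an almost-surely-continuous random variable taking values in $C^\alpha([0,1])$.
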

The following result on the maximum of the Wiener process up to a given time is based on the \emph{\Andre\ reflection principle}.
\begin{theorem}
Denote by $M (t)$ the maximum of the Wiener process up to time $t$.
Then
\[ \Pr(M(t) \geq X)  = 2\Pr(W(t) > X) \,. \]
\end{theorem}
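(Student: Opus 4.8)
The plan is to recast the classical André reflection argument constructively, routing it through the dyadic random-walk approximations $W_n$ used to build $W$. Write $\tau = \inf\{s \geq 0 \mid W(s) \geq X\}$ for the first-passage time over the level $X>0$. Since a sample path is almost-surely continuous by the preceding theorem, the running maximum attains $X$ exactly when the path itself does, so $\{M(t) \geq X\}$ and $\{\tau \leq t\}$ agree up to a null set; moreover a path ending above $X$ must have crossed it, giving the inclusion $\{W(t) > X\} \subseteq \{M(t) \geq X\}$. The decomposition
\[ \Pr(M(t) \geq X) = \Pr(M(t) \geq X \wedge W(t) > X) + \Pr(M(t) \geq X \wedge W(t) < X) + \Pr(M(t) \geq X \wedge W(t) = X) \]
then reduces the theorem to showing that the first two summands are equal and each coincides with $\Pr(W(t) > X)$, the third being null since $W(t)\sim N(0,t)$ has a density.

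The heart of the matter is the reflection identity $\Pr(M(t) \geq X \wedge W(t) < X) = \Pr(M(t) \geq X \wedge W(t) > X)$. Classically this follows from the strong Markov property: conditional on $\{\tau\leq t\}$, the post-$\tau$ increment $W(t)-X$ is symmetric about $0$, which holds constructively from the symmetry of the normal increments together with the independent-increments property. I would instead establish the identity at the level of the approximating symmetric walks $W_n$, where it becomes the purely combinatorial ballot/path-counting reflection: reflecting a lattice path about the level $X$ after its first passage is a measure-preserving involution, each path carrying equal probability, that interchanges endpoints below and above $X$. Combined with $\{W_n(t)>X\}\subseteq\{M_n(t)\geq X\}$ this yields $\Pr(M_n(t)\geq X) = 2\Pr(W_n(t)>X)$ up to discrete boundary terms, and passing to the limit via the effective convergence $W_n\to W$ and the continuity of the distribution (Theorem~\ref{thm:randomvariabledistribution}) delivers the result.

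The main obstacle is controlling the limit at the threshold $X$. The first-passage time is only almost-surely defined and depends discontinuously on the path, so the strong Markov property is not directly effective; this is precisely why I route the reflection through the combinatorial identity for $W_n$, where the hitting time over finitely many dyadic instants is genuinely computable. The remaining delicate point is that the discrete identity carries boundary corrections of order the lattice spacing, and one must verify that these vanish in the limit. Here the continuity of the density of $W(t)$ ensures that the overlap events $\{W_n(t)\approx X\}$ have vanishing probability, so that $\Pr(M(t)\geq X)$ and $2\Pr(W(t)>X)$, each realised as the limit of its discrete counterpart, coincide.
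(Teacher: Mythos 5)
The first thing to note is that the paper itself offers no proof of this theorem: it is stated as classical background, with only the remark that it rests on the \Andre\ reflection principle. So your proposal cannot be checked against an in-paper argument and must be judged on its own terms. On those terms, your overall structure --- decompose $\Pr(M(t)\geq X)$ according to the sign of $W(t)-X$, kill the boundary event $\{W(t)=X\}$ by absolute continuity of the Gaussian law, and obtain the reflection identity $\Pr(M(t)\geq X \wedge W(t)<X)=\Pr(M(t)\geq X\wedge W(t)>X)$ at the level of discrete approximants before passing to the limit --- is the standard rigorous route, and it is the right way to avoid an effective strong Markov property, which indeed is not available here.

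There is, however, one concrete mismatch you need to repair. The approximants ``used to build $W$'' in this paper are the \Levy-Ciesielski partial sums, i.e.\ piecewise-linear interpolations of the process through its values at dyadic times, whose increments are i.i.d.\ \emph{Gaussian}, not $\pm1$ steps. They are not lattice paths, the sample paths do not carry equal probability, and the discrete walk does not hit the level $X$ exactly at first passage, so the ``purely combinatorial ballot/path-counting reflection'' does not apply verbatim. You have two ways out. Either (a) keep the paper's approximants and run the reflection as a measure-preserving involution conditional on the first-passage index, using only the symmetry and independence of the Gaussian increments; but then the involution exchanges $\{W_n(t)<X\}$ with $\{W_n(t)>X+2\delta\}$ where $\delta=W_n(\tau)-X$ is the \emph{overshoot}, and you must show these overshoot corrections (of order $2^{-n/2}$ up to logarithmic factors, which the bounds $|A_{n,k}|<n$ built into the paper's construction make effective) vanish in the limit --- this is an error source beyond the endpoint event $\{W_n(t)\approx X\}$ to which your proposal attributes all the corrections. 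Or (b) use genuine simple random walks, where the counting is exact, but then you owe an invariance principle identifying their scaling limit with the paper's $W$, which the paper nowhere provides and which would be a substantial addition. Finally, for the limit interchange it is cleaner to sandwich monotonically in the threshold, showing $2\Pr(W(t)>X+\epsilon)\leq\liminf_{n}\Pr(M_n(t)\geq X)$ and the corresponding upper bound with $X-\epsilon$, then let $\epsilon\to0$ using continuity of the Gaussian distribution function; this also yields $\Pr(M(t)=X)=0$ a posteriori, rather than tacitly relying on it when converting $\Pr(M_n(t)\geq X)\to\Pr(M(t)\geq X)$. With repair (a) carried out, your argument becomes a correct and fully constructive proof of the stated identity.
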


There are two main constructions of a Wiener process.
The \emph{Paley-Wiener construction} yields a Wiener process on $[0,1]$ as
\[ W(t) = A_0 t + \frac{\sqrt{2}}{\pi} \sum_{n=1}^{\infty} A_n \frac{\sin(n\pi t)}{n} \]
where the $A_n$ are independent $N(0,1)$ random variables.
The simpler \emph{\Levy-Ciesielski construction} uses wavelets.
Let $h_{n,k}$ be the $(n,k)$-th \emph{Haar function}, defined for $0 \leq k <2^n$ by
\[ h_{n,k}(x) = \begin{cases} +2^{n/2} \text{ for } \frac{k}{2^n} \leq t \leq \frac{k+1/2}{2^n} , \\ -2^{n/2} \text{ for } \frac{k+1/2}{2^n} \leq t \leq \frac{k+1}{2^n} , \\ \quad 0 \ \text{ otherwise}. \end{cases} \]
Let be $s_{n,k}$ be the $(n,k)$-th \emph{Schauder function} defined by
\[ s_{n,k}(t) = \int_{0}^{t} h_{n,k}(\tau) \, d\tau \;. \]
Note that $\sup_{t\in[0,1]}s_{n,k}(t) = 2^{-n/2}$.
Let $A_{n,k}$ be a sequence of independent $N(0,1)$ random variables on a probability space $(\Omega,P)$.
Then
\[ W(t) = \sum_{n=0}^{\infty}\sum_{k=0}^{2^n-1} A_{n,k} s_{n,k}(t) \]
is a Wiener process on $[0,1]$.

It should be noted that the the sum $\sum_{n=0}^{\infty}\sum_{k=0}^{2^n-1} A_{n,k}(\omega) s_{n,k}(t)$ does not converge for all values of the random variables $A_{n,k}$.
However, if the $A_{n,k}$ have growth bounded by $\alpha^{n/2}$ where $\alpha<2$, then $\sum_{n=0}^{\infty}\sum_{k=0}^{2^n-1} A_{n,k} s_{n,k}(t)$ converges uniformly.
By the Borel-Cantelli lemma, $\Pr(A_{n,k} \geq 2^{n/2} \text{ i.o.})=0$.

However, given only finitely many values of $A_{n,k}(\omega)$, we cannot compute a uniform approximation to the sample path $W(\omega)$, or even an approximation in $L^2([0,1])$.
In other words, the function $\omega \fto \sum A_{n,k}(\omega) s_{n,k}$ is not a computable function from $\Omega$ to $C([0,1])$ or $L^2([0,1])$.
However, it \emph{is} the case that for any open subset $U$ of $C([0,1])$, the probability $P(\{\omega \mid \sum A_{n,k}(\omega) s_{n,k} \in U \})$ is computable in $\tpHl$.
Further, there is a sequence of closed compact subsets $K_n$ of $\Omega$ such that $\Pr(K_n)\fto 1$ as $n\to\infty$ and $W$ is computable on each $K_n$.

We now give a modification of the \Levy-Ciesielski construction with base space $\Omega=\{0,1\}^\omega$ for which the Wiener process is a continuous function $W:\Omega\fto C([0,1])$.
In fact, we obtain sample paths which are \Holder-continuous in $C^\alpha$ for any $\alpha<1/2$, though we shall only prove the continuous case.
\begin{theorem}[Computable Wiener process]
Let $\Omega=\{0,1\}^\omega$ and $P$ be the standard probability measure on $\Omega$.
Then there exists a computable Wiener process $W:\Omega \pfto C[0,1]$ with open full measure domain.
\end{theorem}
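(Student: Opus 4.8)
The plan is to realise the \Levy-Ciesielski series as a genuine partial function of the sample point, controlling with a single device the two defects of the naive construction, namely non-computability of the limit and the size of the domain. First I would build the coefficients: fix a computable pairing of $\{(n,k):0\le k<2^n\}$ with $\N$ and, reading disjoint blocks of bits of $\omega\in\Omega$, extract independent uniform variables $U_{n,k}\in[0,1]$; setting $A_{n,k}(\omega)=\Phi^{-1}(U_{n,k}(\omega))$, where $\Phi^{-1}$ is the computable, continuous inverse normal distribution function, yields independent $N(0,1)$ coefficients that are computable and continuous in $\omega$ on the co-null open set where no $U_{n,k}\in\{0,1\}$. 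The only estimate needed for convergence is that the Schauder functions at a fixed level have disjoint supports, so that $\bigl\|\sum_{k}A_{n,k}(\omega)\,s_{n,k}\bigr\|_\infty\le b_n(\omega)\,2^{-n/2}$ with $b_n(\omega)=\max_{0\le k<2^n}|A_{n,k}(\omega)|$.

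Next I would establish convergence on a set of full measure. A union bound on the Gaussian tail gives $P(b_n>\beta_n)\le 2^{\,n+1}e^{-\beta_n^2/2}$, so choosing $\beta_n=c\sqrt n$ with $c$ large makes $\sum_n P(b_n>\beta_n)<\infty$; by Borel--Cantelli, for almost every $\omega$ we have $b_n(\omega)\le c\sqrt n$ for all large $n$, and since $\sum_n \sqrt n\,2^{-n/2}<\infty$ the series $W(\omega)=\sum_{n}\sum_{k}A_{n,k}(\omega)\,s_{n,k}$ converges uniformly. On this set the limit is, by the standard \Levy-Ciesielski theory, a Wiener process: $W(0)=0$, the increments are independent with $W(t)-W(s)\sim N(0,t-s)$, and the paths are continuous, indeed $\alpha$-\Holder\ for $\alpha<1/2$.

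The crux is to replace this full-measure $F_\sigma$ convergence set by an \emph{open} full-measure domain on which $W$ is computable. The obstruction is exactly the one noted before the theorem: with a fixed assignment of bits to coefficients, certifying the uniform tail bound $\sum_{n>N}b_n(\omega)\,2^{-n/2}<\varepsilon$ needed to emit a sup-norm approximation requires information about $A_{n,k}$ for all $n>N$, i.e.\ infinitely many bits, so no finite prefix can witness membership of the domain. The modification I would make is to read, interleaved with the coefficient bits and in a self-delimiting fashion, level thresholds $\beta_n(\omega)$ each determined by a finite prefix of $\omega$, and to \emph{define} $W(\omega)$ only once a prefix has been read that certifies $b_n(\omega)\le\beta_n(\omega)$ for every $n$ together with $\sum_n\beta_n(\omega)\,2^{-n/2}<\infty$. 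The thresholds are taken so large, and the certifying code so arranged, that a certificate appears for almost every $\omega$ while the associated clipping never actually alters a coefficient off a null set, so that the Gaussian law is preserved. The set of $\omega$ for which such a certificate is eventually read is a union of cylinders, hence open, and is of full measure by the estimate above; on it the certified bound supplies a deterministic modulus, so a fast Cauchy sequence of piecewise-linear dyadic approximations to $W(\omega)$ is computable. Making this certificate genuinely finite while keeping the law exactly Gaussian is the main obstacle and is where the construction must be carried out with care.

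Finally, the domain being open and of full measure, Proposition~\ref{prop:almostsurelycontinuousrandomvariable} shows that $W$ is effectively a measurable random variable valued in $C[0,1]$, giving the computable partial function $W:\Omega\pfto C[0,1]$ claimed. That it is a Wiener process is then read off from its distribution: by Theorem~\ref{thm:imagerandomvariable} the evaluation maps $W\mapsto W(t)$ and the increment maps $W\mapsto W(t)-W(s)$ are computable, and Theorems~\ref{thm:randomvariabledistribution} and~\ref{thm:productrandomvariable} give the computability and product structure of their joint distributions, confirming the independence of increments and the laws $N(0,t-s)$ inherited from the \Levy-Ciesielski series. The \Holder\ regularity is proved separately by the usual level-wise estimate and is not needed for the statement.
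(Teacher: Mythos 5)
Your proposal correctly isolates the obstruction (with a fixed bits-to-coefficients assignment, no finite prefix can certify the tail bound) and correctly handles everything peripheral --- the disjoint-support Schauder estimate, the Gaussian tail/Borel--Cantelli bound, and the distributional verification at the end. But the one step you explicitly defer, ``making this certificate genuinely finite while keeping the law exactly Gaussian,'' is precisely the content of the theorem, and as sketched your device would fail. If the thresholds $\beta_n(\omega)$ are read from bits disjoint from (hence independent of) the coefficient bits, then the event that $b_n(\omega)\le\beta_n(\omega)$ for \emph{every} $n$ still depends on infinitely many unread coefficient bits, so no finite prefix can witness it; and if instead you force the bound by clipping, the clipping alters each coefficient on an event of positive probability --- a Gaussian exceeds any independently chosen threshold with positive probability --- so the claim that the law is preserved ``off a null set'' cannot hold.

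The paper resolves this by inverting the order of sampling: it conditions on the envelope first and samples the coefficients afterwards. Choose envelopes $\beta_{m,n}$ with $\beta_{m,n}=n$ for $n>m$, nondecreasing in $m$, and with the finitely many free values $\beta_{m,n}$ for $n\le m$ tuned so that the envelope events $E_m=\{\forall n,k:\ |A_{n,k}|<\beta_{m,n}\}$ have exactly dyadic probabilities; the tail estimate $\sum_{n\ge m} 2^{n+1}\Pr(|A_{n,k}|\ge n)\le 2^{-m}$ (for $m\ge 6$) shows this tuning is possible. Then a full-measure open subset of $\Omega$ is partitioned into pieces $\Omega_m$ of measure $2^{-(m+1)}$ corresponding to $E_m\setminus E_{m-1}$: the first bits of $\omega$ select $m$, i.e.\ the envelope, and the remaining bits computably construct the $A_{n,k}$ from their conditional law given $\Omega_m$, each a computable distribution supported on a bounded interval. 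On $\Omega_m$ the bound $|A_{n,k}|<n$ for all $n>m$ is known \emph{a priori}, so $\sum_{n>m} n\,2^{-n/2}$ is a deterministic modulus of uniform convergence and $W$ is computable on the open full-measure union $\bigcup_m\Omega_m$. This mixture decomposition $P=\sum_m 2^{-(m+1)}P(\cdot\mid\Omega_m)$, with the envelope probabilities arranged to be dyadic so that the partition can be realised by cylinders, is exactly what reconciles the finite certificate with the exact Gaussian law; it is the missing idea in your proposal.
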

\begin{proof}[Sketch of proof]
The basic idea is to modify the \Levy-Ciesielski construction so that after a finite number of bits of information we can bound the size of $A_{n,k}$ for all sufficiently large $n$.

For the event described by $|A_{n,k}|<n$ whenever $n \geq m$, we have
\[ \begin{aligned} & \prod_{n=m}^{\infty} \prod_{k=1}^{2^n} \Pr(|A_{n,k}|<n) \geq 1- \sum_{n=m}^{\infty} 2^n \Pr(|A_{n,k}| \geq n)
  \geq 1-\sum_{n=m}^{\infty} 2^n \cdot 2 \cdot \frac{1}{\sqrt{2\pi}} \int_{n}^{\infty} e^{-t^2/2}\,dt \\&\qquad
  \geq 1-\sum_{n=m}^{\infty} 2^{n+1} \cdot e^{-n^2/4} \cdot \frac{1}{\sqrt{2\pi}} \int_{n}^{\infty} e^{-t^2/4}\,dt
   \geq 1-\sum_{n=m}^{\infty} 2^{n+1} \cdot 4^{-n} \cdot 4^{-1}  = 1-\frac{1}{2^m} \,.
\end{aligned} \]
whenever $m\geq 6$, since for $n\geq 6$ we have $e^{-n^2/4} < 4^{-n}$ and $\frac{1}{\sqrt{2\pi}} \int_{n}^{\infty} e^{-t^2/4} < 1/4$.
%Note that asymptotically better bounds can be obtained.

We can therefore construct numbers $\beta_{m,n}$ such that $\beta_{m,n}=n$ whenever $n>m$, $\beta_{m+1,n}\geq \beta_{m,n}$ for all $m,n$, and \[\Pr(\forall n=0,\ldots,\infty,\ \forall k=0,\ldots,2^n-1,\ |A_{n,k}|< \beta_{m,n}) = 1/2^m . \]

We now partition a full-measure open subset of $\Omega$ into sets $\Omega_m$ of measure $1/2^{m+1}$ such that every $|A_{n,k}(\omega)| < \beta_{m,n}$ but not every $|A_{n,k}(\omega)| < \beta_{m-1,n}$ whenever $\omega\in \Omega_m$.
On each $\Omega_m$ we can computably construct the corresponding values of $A_{n,k}(\omega)$.
In particular, on $\Omega_m$, every $A_{n,k}$ is bounded and $|A_{n,k}|<n$ whenever $n>m$, so
\[ \begin{aligned}  & \textstyle \bigl| W(\omega,t) - \sum_{n=0}^{m} \sum_{k=0}^{2^n-1} A_{n,k}(\omega) s_{n,k}(t) \bigr|
    = \bigl| \sum_{n=m+1}^\infty \sum_{k=0}^{2^n-1} A_{n,k}(\omega) s_{n,k}(t) \bigr| \\
   & \qquad\qquad \qquad  \textstyle  \leq \sum_{n=m+1}^{\infty} n \cdot 2^{-n/2} \to 0 \text{ as } m\to\infty . \end{aligned} \]
\end{proof}

\section{Stochastic integration}
\label{sec:stochasticintegration}

A continuous-time real-valued stochastic process defined over the interval $[0,T]$ is a random variable taking values in $\tpCts([0,T];\tpRe$.
Since the indefinite integral $\tpCts([0,T];\tpRe) \fto \tpCts([0,T];\tpRe)$ taking $\xi$ to the function $t\mapsto\int_{0}^{t}\xi(s)\,ds$ is computable, so is the integral $  t\mapsto \int_{0}^{t} X(s)\, ds$.
In stochastic integration, we aim to give a meaning to the integral
\[ \int_{0}^{t} X(s) dW(s) \]
for a process $X$ with respect to the Wiener process.

We say that a process $X(t)$ is \emph{nonanticipative} with respect to the Wiener process if $X(t)$ depends only on $X_0$ and on $W|_{[0,t]}$, the restriction of $W$ to $[0,t]$.
Formally, letting $\mathcal{F}_t$ be the topology on $\Omega$ generated by $X_0$ and $W|_{[0,t]}$, then $X|_{[0,t]}$ is a limit of $\mathcal{F}_t$-continuous functions $X_n:\Omega\fto C([0,t];\R)$.

It turns out that this integral \emph{cannot} be computed pathwise by the Stieltjes integral.
Instead, one uses the \emph{\Ito\ integral}, which is first defined for step processes, and then extended to continuous processes.
In this section, we prove that the standard construction of the \Ito\ integral effectivises.

A stochastic process $X(\cdot)$ is a \emph{step process} if there are random variables $X_i$, $i=0,\ldots,n-1$ and times $0=t_0<t_1<\cdots<t_n=T$ such that $X(t)=X_i$ for $t\in[t_i,t_{i+1})$.
We formally write $X(t) = X_i \, I[t\in[t_i,t_{i+1})]$, where $I[t\in[t_i,t_{i+1})]$ is the indicator function with value $1$ if $t\in[t_i,t_{i+1})$ and $0$ otherwise.
It is straightforward to show that if $\Ex(X_i^2)<\infty$ for all $i$, then the step process is well-defined as an element of $M^2(L^2([0,T];\R))$, where $L^2(L^2([0,T];\R)$ is the space of Lesbesgue-integrable functions on $[0,T]$, and $M^2$ the space of square-integrable random variables.

We first show that given $\xi \in C([0,T];\R)$, we can compute step functions $\eta$ taking values in the Lesbesgue space $L^2([0,T];\R)$.
\begin{theorem}\label{thm:computablestepfunction}
Given $\xi:C([0,T];\R)$, we can compute a sequence of step function $\eta_n:[0,T]\fto \R$ such that $\eta_n\to\xi$ effectively in $L^2([0,T];\R)$.
\end{theorem}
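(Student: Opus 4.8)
The plan is to discretise the interval $[0,T]$ using a sequence of increasingly fine uniform partitions and approximate $\xi$ by a step function that is constant on each subinterval, taking the left-endpoint value. First I would fix, for each $n$, the partition points $t_{n,i} = iT/2^n$ for $i=0,\ldots,2^n$, and define the step function
\[ \eta_n(t) = \sum_{i=0}^{2^n-1} \xi(t_{n,i})\, I[t\in[t_{n,i},t_{n,i+1})] . \]
Since $\xi$ is continuous and each evaluation $\xi(t_{n,i})$ is computable from a name of $\xi\in\tpCts([0,T];\R)$, the finitely many coefficients defining $\eta_n$ are computable; hence the whole sequence $(\eta_n)$ is computable in the type of step functions viewed as elements of $L^2([0,T];\R)$.

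The convergence estimate is the technical core. Because $[0,T]$ is compact, $\xi$ is uniformly continuous, and this uniform continuity is \emph{effective}: from a name of $\xi$ we can compute a modulus of continuity, i.e. for each $m$ a value $\rho_m$ such that $|s-t|<\rho_m \implies |\xi(s)-\xi(t)|<2^{-m}$. On each subinterval $[t_{n,i},t_{n,i+1})$ of length $T/2^n$, whenever $T/2^n<\rho_m$ we have $|\xi(t)-\eta_n(t)| = |\xi(t)-\xi(t_{n,i})| < 2^{-m}$ for all $t$ in that subinterval. Summing the squared error over the partition then gives
\[ \|\xi-\eta_n\|_{L^2}^2 = \int_0^T |\xi(t)-\eta_n(t)|^2\,dt \leq T\cdot 2^{-2m} , \]
so $\|\xi-\eta_n\|_{L^2} \leq \sqrt{T}\,2^{-m}$. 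By choosing $n$ large enough (computably, from the modulus $\rho_m$ and the known value $T$) so that $T/2^n<\rho_m$, I obtain an explicit bound that tends to zero. Reindexing the sequence so that the $n$-th term satisfies $\|\xi-\eta_n\|_{L^2}\leq 2^{-n}$ yields an effective Cauchy sequence, which is exactly what is required for effective convergence in $L^2([0,T];\R)$.

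I expect the main obstacle to be the \emph{effectivity} of the modulus of continuity rather than the estimate itself. The inequality is routine once a modulus is in hand, but one must justify that a name of $\xi$ in the exponential type $\tpCts([0,T];\R)$ genuinely lets us compute $\rho_m$; this follows from compactness of $[0,T]$ together with the fact, used repeatedly earlier in the paper (e.g.\ in the lemma approximating continuous random variables by simple ones), that the supremum of a computable continuous function over a compact set is computable. Concretely, I would compute $\sup_{|s-t|\leq\delta}|\xi(s)-\xi(t)|$ as a decreasing function of $\delta$ and search for a $\delta=\rho_m$ driving it below $2^{-m}$; the search terminates precisely because uniform continuity guarantees such a $\delta$ exists. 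The remaining bookkeeping, converting the $L^\infty$ bound on each subinterval into the $L^2$ bound and reindexing to achieve the strong Cauchy rate $2^{-n}$, is then purely mechanical.
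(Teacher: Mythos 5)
Your proof is correct, but it takes a genuinely different route from the paper's at the decisive step. The paper uses the same left-endpoint step-function construction on computable partitions of mesh $\delta_n\to 0$, but it never computes a modulus of continuity: it observes instead that the error $\|\xi-\eta_n\|_{L^2}^2=\sum_{i}\int_{t_{n,i}}^{t_{n,i+1}}(\xi(t)-\xi(t_{n,i}))^2\,dt$ is itself a computable real (the integral of a computable continuous function over a compact interval), while plain uniform continuity of $\xi$ guarantees classically that it tends to $0$; since each error comes with verified upper bounds, one can search for indices driving the error below any tolerance, so the convergence is effective with no rate known in advance --- the paper even remarks explicitly that the modulus of continuity is not needed to compute the rate of convergence. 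You instead extract an effective modulus $\rho_m$ from the name of $\xi$ by computing $\sup_{|s-t|\leq\delta}|\xi(s)-\xi(t)|$ over the compact set $\{(s,t)\in[0,T]^2 : |s-t|\leq\delta\}$ and searching for a $\delta$ pushing it below $2^{-m}$; this is legitimate in the paper's framework (the same sup-over-a-compact-set computability is invoked in the lemma approximating continuous random variables by simple ones), and your terminating search is justified exactly as you say, since uniform continuity supplies a $\delta$ with supremum at most $2^{-m-1}$, so the strict bound will be witnessed. What your route buys is an explicit a priori rate, $\|\xi-\eta_n\|_{L^2}\leq\sqrt{T}\,2^{-m}$ once $T/2^n<\rho_m$, and hence a strong Cauchy sequence after reindexing; what the paper's route buys is economy, since it needs only computability of the $L^2$ error and concentrates all use of compactness in a single integral computation. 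Both arguments ultimately rest on an unbounded but provably terminating search --- yours at the modulus-extraction stage, the paper's at the error-verification stage --- so neither yields a rate uniform in $\xi$, only one computable from its name.
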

\marginpar{Do we use this? Address convergence of step *processes*}
\begin{proof}
Choose a sequence $\delta_n>0$ effectively converging to $0$, an choose partitions $\calT_n = \{0=t_{n,0}<t_{n,1}<\cdots<t_{n,m_n}=T\}$ where each $t_{n,i}$ is computable and $t_{n,i+1}-t_{n,i}<\delta_n$ for all $n,i$.
Compute $\eta_{n,i}=\xi(t_{n,i})$ and define $\eta_n(t)=\eta_{n,i}$ for $t_{n,i}\leq t<t_{n,i+1}$.
Clearly $\eta_n \in L^2([0,T];\R)$.
The integral $\int_{t=0}^{T} (\xi(t)-\eta_n(t))^2\,dt = \sum_{i=0}^{m_n-1} \int_{t_{i}}^{t_{i+1}} (\xi(t)-\xi(t_{n,i}))^2\,dt$ is computable, and converges to $0$ as $n\to\infty$ by continuity of $\xi$, so convergence is effective.
\end{proof}
\noindent
Note that continuity of $\xi$ is required to compute $\xi(t_i)$, but we do not need to know the modulus of continuity to compute the rate of convergence of $\eta_n$.
By Theorem~\ref{thm:imagerandomvariable}, this pathwise computation extends to random variables, and it is clear that if $X$ is nonanticipative with respect to $W$, then so are the step processes $X_n$.

\begin{definition}[\Ito\ integral for step processes]
Given a step process $X = \sum_{i=0}^{n-1} X_i \, I[t\in[t_i,t_{i+1})]$, we define the \emph{\Ito\ integral} as
\[ \int_{0}^{T} X(t) dW(t) = \sum_{i=0}^{n-1} X_i \bigl( W(t_{i+1})-W(t_i) \bigr) . \]
This definition can be extended to an indefinite integral:
Take $m(s) = \max\{i\mid t_i<s\}$ and define
\begin{equation}
\label{eq:indefiniteitointegral}
\int_{0}^{t} X(t) \,dW(t) = \sum_{i=0}^{m(s)-1}  X_i \bigl(W(t_{i+1})-W(t_i)\bigr) + X_{m(s)} \bigl(W(s)-W(t_{m(s)})\bigr)  .
\end{equation}
\end{definition}

\begin{lemma}
The \Ito\ integral of a step process is computable as a continuous process.
Further, if $X(\cdot)$ is nonanticipative with respect to the Wiener process, then so is its \Ito\ integral.
\end{lemma}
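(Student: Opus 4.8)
The plan is to realise the indefinite \Ito\ integral~\eqref{eq:indefiniteitointegral} as the image of a single jointly continuous ``assembly'' map applied to the joint random variable consisting of the coefficients $X_0,\ldots,X_{n-1}$ and the Wiener path $W$, and then to read off both computability and nonanticipativeness from the structure of that map together with Theorems~\ref{thm:productrandomvariable} and~\ref{thm:imagerandomvariable}.

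Concretely, first I would define the assembly map $\Phi:\R^n\times C([0,T];\R)\fto C([0,T];\R)$ by
\[ \Phi(a_0,\ldots,a_{n-1},w)(t) = {\textstyle\sum_{i=0}^{m(t)-1}} a_i\bigl(w(t_{i+1})-w(t_i)\bigr) + a_{m(t)}\bigl(w(t)-w(t_{m(t)})\bigr) , \]
where $m(t)=\max\{i\mid t_i<t\}$ and the times $t_i$ are the fixed, computable breakpoints of the step process. On each subinterval $[t_i,t_{i+1})$ this is a finite sum of products of a coordinate $a_j$ with a difference of evaluations of $w$, and the pieces agree at the nodes, so $\Phi(a,w)$ is genuinely continuous in $t$; moreover $\Phi$ is jointly continuous into the sup-norm space $C([0,T];\R)$ because point evaluation $w\mapsto w(s)$ is continuous and the resulting estimate on $|\Phi(a,w)(t)-\Phi(a',w')(t)|$ is uniform in $t$. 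Since evaluation and real arithmetic are computable, $\Phi$ is a computable element of the exponential type. I would then form the joint random variable $(X_0,\ldots,X_{n-1})\times W$ with values in $\R^n\times C([0,T];\R)$, which is computable by Theorem~\ref{thm:productrandomvariable} (using that $W$ is a measurable random variable by the Computable Wiener process theorem together with Proposition~\ref{prop:almostsurelycontinuousrandomvariable}), and apply Theorem~\ref{thm:imagerandomvariable} to conclude that $\Phi\circ\bigl((X_0,\ldots,X_{n-1})\times W\bigr)$ is a computable random variable in $C([0,T];\R)$. By construction this is exactly the indefinite \Ito\ integral, establishing the first claim.

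For nonanticipativeness I would track, for each time $t$, which topology the ingredients of $\Phi(\cdot)(t')$ belong to for $t'\leq t$. Assume first that the $X_i$ are continuous; nonanticipativity of $X$ forces each $X_i$ to be $\calF_{t_i}$-continuous, i.e.\ to factor continuously through $(X_0,W|_{[0,t_i]})$. Restricting the integral process to $[0,t]$ with $t\in[t_i,t_{i+1})$, the only coefficients appearing are $X_0,\ldots,X_i$, each of which factors through $(X_0,W|_{[0,t_j]})$ with $t_j\leq t$, and the only path values used are increments of $W$ on $[0,t]$; hence every ingredient factors through $(X_0,W|_{[0,t]})$, so the restriction $Y|_{[0,t]}$ is $\calF_t$-continuous. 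In the general case, approximate each $X_i$ by $\calF_{t_i}$-continuous random variables $X_i^{(m)}$; the corresponding integrals $Y^{(m)}|_{[0,t]}$ are $\calF_t$-continuous by the continuous case and converge effectively to $Y|_{[0,t]}$ (Theorem~\ref{thm:productrandomvariable} gives effective convergence of the joint variables and Theorem~\ref{thm:imagerandomvariable} transports it through $\Phi$), so $Y$ is nonanticipative.

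I expect the main obstacle to be bookkeeping rather than conceptual: verifying that $\Phi$ lands continuously in $C([0,T];\R)$ with the uniform norm (the joint-continuity estimate, uniform in $t$, across the breakpoints), and, more delicately, pinning down the filtration argument so that each $X_i$ is genuinely $\calF_{t_i}$-measurable and $Y|_{[0,t]}$ factors through $\calF_t$ rather than some larger topology. Once the assembly map is in place, computability is immediate from the product and continuous-mapping theorems, and nonanticipativeness reduces to the observation that $\Phi$ only ever reads coefficients and path-increments at times $\leq t$.
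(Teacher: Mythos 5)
Your proof is correct, and it is best described as a rigorous elaboration of what the paper disposes of in two sentences. The paper's own proof is purely pathwise: it observes from the defining equation~\eqref{eq:indefiniteitointegral} that the integral is continuous on each $[t_i,t_{i+1}]$ and agrees across the breakpoints, and asserts that nonanticipativity is ``clear'' since the formula only reads $W(s)$ for $s\leq t$; computability as a random variable in $C([0,T];\R)$ and the precise filtration statement are left implicit. You supply exactly that missing infrastructure: realising the integral as $\Phi\circ\bigl((X_0,\ldots,X_{n-1})\times W\bigr)$ for a single computable assembly map $\Phi$, so that computability follows from Theorems~\ref{thm:productrandomvariable} and~\ref{thm:imagerandomvariable} --- which is in fact how the paper itself lifts pathwise computations to random variables elsewhere (see the remark after Theorem~\ref{thm:computablestepfunction}). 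This buys something real: since the coefficients $X_i$ need not be continuous functions of $\omega$, the pathwise formula alone does not discharge the claim ``computable as a continuous process'' at the level of measurable random variables, whereas your factorisation does. Two small repairs: the clause ``nonanticipativity of $X$ forces each $X_i$ to be $\calF_{t_i}$-continuous'' is not literally right --- nonanticipativity only makes $X_i$ a \emph{limit} of $\calF_{t_i}$-continuous variables (evaluate the approximants $X_n:\Omega\fto C([0,t_i];\R)$ at $t_i$), so your ``continuous case'' should be stated as the case where each $X_i$ is itself $\calF_{t_i}$-continuous, with your approximation step covering the rest, which it already does; and you should record the monotonicity $\calF_s\subset\calF_t$ for $s\leq t$ (the restriction map $C([0,t];\R)\fto C([0,s];\R)$ is continuous), which is the fact that lets all coefficients $X_j$ with $t_j\leq t$ be treated as $\calF_t$-measurable in the factorisation of $Y|_{[0,t]}$.
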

\begin{proof}
By the defining equation~\eqref{eq:indefiniteitointegral}, $W(t)$ is continuous when restricted to each interval $[t_i,t_{i+1}]$, and clearly the integral is continuous over the step boundaries.
It is also clear that the \Ito\ integral is nonanticipative, at time $t$ since it depends only on $W(s)$ for $s\leq t$.
\end{proof}

The following \emph{\Ito~equality} is crucial, since it relates the stochastic integral with an ordinary integral.
\begin{lemma}
If $X=\sum X_k \chi_{[t_k,t_{k+1}})$ is a step process, and $X_k$ is independent of $W(t)-W(s)$ for all $t>s>t_k$, then
\begin{equation} \label{eq:stepitoisometry} \Ex\Bigl( \int_{0}^{T} X(t)\,dW(t) \Bigr)^{\!2} = \Ex \int_{0}^{T} X(t)^2 \, dt . \end{equation}
\end{lemma}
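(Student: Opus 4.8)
The plan is to expand the square of the Itô integral, take expectations, and use the independence hypothesis to annihilate every cross term, so that both sides collapse to the same sum. Writing $\Delta W_i = W(t_{i+1})-W(t_i)$, the defining formula for the step-process integral gives $\int_0^T X(t)\,dW(t) = \sum_{i=0}^{n-1} X_i\,\Delta W_i$, whence by linearity of expectation
\[ \Ex\Bigl(\int_0^T X\,dW\Bigr)^{\!2} = \sum_{i=0}^{n-1}\Ex\bigl(X_i^2\,\Delta W_i^2\bigr) + 2\!\!\sum_{0\le i<j\le n-1}\!\!\Ex\bigl(X_i X_j\,\Delta W_i\,\Delta W_j\bigr). \]
For the diagonal terms I would use that $X_i$ is independent of the increment $\Delta W_i$ (reading the hypothesis with left endpoint $s = t_i$), so by the product rule $\Ex(X_1X_2)=\Ex(X_1)\Ex(X_2)$ for independent integrable variables noted above, $\Ex(X_i^2\,\Delta W_i^2) = \Ex(X_i^2)\,\Ex(\Delta W_i^2)$. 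Since $W(t_{i+1})-W(t_i)\sim N(0,t_{i+1}-t_i)$ has second moment $t_{i+1}-t_i$, the diagonal contribution is $\sum_i \Ex(X_i^2)(t_{i+1}-t_i)$.

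Next I would show each cross term vanishes. For $i<j$ the factor $\Delta W_j = W(t_{j+1})-W(t_j)$ is an increment over an interval lying in the strict future of $t_j \ge t_{i+1}$, whereas $X_i$, $X_j$ and $\Delta W_i$ are all determined by the data up to time $t_j$. Invoking nonanticipativity—formally, independence of $\Delta W_j$ from the topology $\mathcal{F}_{t_j}$ generated by the process up to time $t_j$, in the sense of Definition~\ref{defn:independentrandomvariables}—lets me factor $\Ex(X_i X_j\,\Delta W_i\,\Delta W_j) = \Ex(X_i X_j\,\Delta W_i)\,\Ex(\Delta W_j)$, and then $\Ex(\Delta W_j)=0$ (the increment is centred) forces the term to be zero. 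The right-hand side is then immediate: pathwise $X(t)^2 = \sum_i X_i^2\,\chi_{[t_i,t_{i+1})}(t)$, so $\int_0^T X(t)^2\,dt = \sum_i X_i^2(t_{i+1}-t_i)$, and taking expectations yields $\sum_i \Ex(X_i^2)(t_{i+1}-t_i)$, matching the diagonal sum and completing the identity.

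The main obstacle is justifying the cross-term factorisation. The stated hypothesis only asserts that each \emph{individual} $X_k$ is independent of strictly-future increments, but the argument needs the stronger joint statement that the entire product $X_i X_j\,\Delta W_i$—which mixes variables indexed by both $i$ and $j$—is independent of $\Delta W_j$. This is exactly what nonanticipativity (adaptedness of the step process to the sub-topology filtration generated by $W$) supplies, so I would recast the hypothesis as independence from $\mathcal{F}_{t_j}$ rather than from single increments, and check that the computable notion of independence relative to a sub-topology supports the required factorisation. A minor related point is the boundary reading of the hypothesis ($s \ge t_k$ versus $s > t_k$) needed for the diagonal terms, and one must also ensure the relevant second moments $\Ex(X_i^2)$ are finite so that the integrable-variable product rule genuinely applies.
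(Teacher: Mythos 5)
Your proof is correct and takes essentially the same route as the paper's: expand the square into diagonal and cross terms, annihilate each cross term by factoring out the centred future increment via independence (so $\Ex\bigl(X_iX_j\,\Delta W_i\bigr)\Ex\bigl(\Delta W_j\bigr)=0$ for $i<j$), and evaluate the diagonal using $\Ex(\Delta W_i^2)=t_{i+1}-t_i$. The subtleties you flag --- that the factorisation needs joint independence of the product $X_iX_j\,\Delta W_i$ from $\Delta W_j$ rather than independence of each $X_k$ alone, and the boundary reading $s=t_k$ for the diagonal terms --- are glossed over in exactly the same way in the paper's own proof, so your explicit treatment of them is if anything more careful than the original.
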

\begin{proof}
\[ \begin{aligned}
  \Ex\Bigl( \int_{0}^{T} X(t)\,dW(t) \Bigr)^2 &= \Ex\Bigl( \int_{0}^{T} X(s)\,dW(s)\int_{0}^{T} X(t)\,dW(t)\Bigr) \\
    &= \Ex\Bigl(\sum_{i=0}^{m-1}  X_i \bigl(W(t_{i+1})-W(t_i)\bigr) \sum_{j=0}^{m-1}  X_j \bigl(W(t_{j+1})-W(t_j)\bigr) \Bigr) \\
    &= \sum_{i,j=0}^{m-1} \Ex\bigl(X_i X_j (W(t_{i+1})-W(t_i)) (W(t_{j+1})-W(t_j)) \bigr)
   \end{aligned} \]
If $i<j$, then since $X_i$, $X_j$ are independent of $(W(t_{j+1})-W(t_j))$,
\[ \begin{aligned} & \Ex\bigl( X_i X_j (W(t_{i+1})-W(t_i)) (W(t_{j+1})-W(t_j)) \bigr)
   \\[\jot] &\qquad = \Ex\bigl(X_i X_j (W(t_{i+1})-W(t_i))\bigr) \; \Ex\bigl(W(t_{j+1})-W(t_j)\bigr) = 0 \end{aligned}  \]
and a similar estimate holds for $i>j$.
Hence
\[ \begin{aligned}
  \Ex\Bigl( \int_{0}^{T} X(t)\,dW(t) \Bigr)^{\!2}
    &= \sum_{i=0}^{m-1} \Ex\bigl( X_i^2 (W(t_{i+1})-W(t_i))^2 \bigr)
     = \sum_{i=0}^{m-1} \Ex(X_i^2) \Ex\bigl(W(t_{i+1})-W(t_i)\bigr)^2 \\
    &= \sum_{i=0}^{m-1} \Ex(X_i^2) (t_{i+1}-t_i)
     = \Ex\Bigl(\sum_{i=0}^{m-1} X_i^2(t_{i+1}-t_i) \Bigr) \\
    &= \Ex\Bigl(\int_{0}^{T} X(t)^2\,dt \Bigr) .
 \end{aligned} \vspace{-\baselineskip} \]
 \qedhere
\end{proof}

In order to bound the expected maximum value along a path, we will use \emph{martingale} properties of the integrated process.
\begin{definition}[(Sub)martingale]
A discrete stochastic process $X$ is a \emph{martingale} if for all $k$, $\Ex(|X_k|)<\infty$ and $\Ex(X_{k+1}|\mathcal{F}_k)=X_k$, and a \emph{submartingale} if  for all $k$, $\Ex(|X_k|)<\infty$ and $\Ex(X_{k+1}|\mathcal{F}_k)\geq X_k$.

A stochastic process $X$ is a \emph{martingale} if for all $t$, $\Ex(|X_t|)<\infty$ and for all $t>s$, $\Ex(X_{t}|X_{s})=X_s$, and a \emph{submartingale} if $\Ex(X_{t}|X_{s}) \geq X_{s}$.
\end{definition}
\noindent
We can give sufficient conditions for a process to be a (sub)martingale avoiding the use of conditional expectation.
We say $X$ has \emph{independent increments} if for any $t_0<t_1<\cdots<t_n$, the increments $X_{t_{i}}-X_{t_{i-1}}$ for $i=1,\ldots,n$ are all independent.
If $X$ has independent increments, then $X$ is a martingale if $\Ex(X_t-E_s)=0$ whenever $t>s$ and a submartingale if $\Ex(X_t-X_s) \geq 0$.

\begin{lemma}
The \Ito\ integral of a step process has independent increments with zero expectation, so is a martingale.
\end{lemma}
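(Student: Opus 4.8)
The plan is to verify the two hypotheses of the criterion stated just above---zero-mean increments and independent increments---and then invoke it to conclude that the \Ito\ integral $I(t)=\int_0^t X(s)\,dW(s)$ is a martingale. The starting point is the explicit increment formula coming from~\eqref{eq:indefiniteitointegral}: for $s<t$, writing $s\in(t_p,t_{p+1}]$ and $t\in(t_q,t_{q+1}]$, one has
\[ I(t)-I(s) = X_p\bigl(W(t_{p+1})-W(s)\bigr) + \sum_{i=p+1}^{q-1} X_i\bigl(W(t_{i+1})-W(t_i)\bigr) + X_q\bigl(W(t)-W(t_q)\bigr), \]
so that every increment of $I$ is a finite sum of terms $X_i\,\Delta_i W$ in which $\Delta_i W=W(b_i)-W(a_i)$ is an increment of $W$ over a subinterval $[a_i,b_i]\subseteq[t_i,t_{i+1}]$ with $a_i\geq t_i$.

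The zero-mean property is the routine half. In each term $X_i(W(b_i)-W(a_i))$ the coefficient $X_i$ is nonanticipative with $a_i\geq t_i$, so (exactly as in the hypothesis of the \Ito\ equality) $X_i$ is independent of $W(b_i)-W(a_i)$; since $\Ex(X_i^2)<\infty$ gives $\Ex|X_i|<\infty$ and $\Ex(W(b_i)-W(a_i))=0$, each term has expectation $\Ex(X_i)\,\Ex(W(b_i)-W(a_i))=0$. Summing, $\Ex(I(t)-I(s))=0$ for all $t>s$.

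The independence of increments is the part I expect to be the real obstacle, and it deserves care. The natural building blocks---the increments of $W$ over the disjoint subintervals of $[s,t]$---are mutually independent, and each coefficient is nonanticipative, which tempts one to conclude that increments of $I$ over disjoint time intervals are independent. However, distinct increment terms can share randomness through their coefficients: a coefficient $X_i$ attached to a later increment may depend on the same earlier segment of $W$ that enters an earlier increment, and a second-moment (indeed fourth-moment) check shows the resulting products can fail to be independent even though they remain uncorrelated. I would therefore not attempt literal independence of the increments in full generality; instead I would secure the martingale conclusion directly by conditioning. Using the filtration $\mathcal{F}_t$ generated by $X_0$ and $W|_{[0,t]}$, the leading term has $X_p$ being $\mathcal{F}_s$-measurable and $W(t_{p+1})-W(s)$ independent of $\mathcal{F}_s$, while each later term satisfies $\Ex\bigl(X_i(W(t_{i+1})-W(t_i))\mid\mathcal{F}_{t_i}\bigr)=X_i\,\Ex\bigl(W(t_{i+1})-W(t_i)\mid\mathcal{F}_{t_i}\bigr)=0$; taking $\Ex(\,\cdot\mid\mathcal{F}_s)$ of each term and using the tower property then yields $\Ex(I(t)-I(s)\mid\mathcal{F}_s)=0$, which is the martingale property. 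The independent-increments route of the cited criterion goes through cleanly in the special case of deterministic coefficients $X_i$, and it is precisely the gap between that case and the general nonanticipative one that I regard as the crux of the argument.
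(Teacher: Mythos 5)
Your proof is correct, and it departs from the paper's exactly where it should. The paper's proof uses the same increment decomposition and the same zero-mean computation you give (nonanticipativity makes each coefficient $X_i$ independent of the Wiener increment it multiplies, so each term $X_i\bigl(W(b_i)-W(a_i)\bigr)$ has zero mean), but it then simply invokes the independent-increments criterion stated just before the lemma, asserting independence of the increments of the \Ito\ integral without proof. Your suspicion that this assertion fails in general is justified: with $X_0=1$ on $[0,1)$ and $X_1=W(1)$ on $[1,2)$, the increments are $\Delta_1=W(1)$ and $\Delta_2=W(1)\bigl(W(2)-W(1)\bigr)$, and
\[ \Ex\bigl(\Delta_1^2\,\Delta_2^2\bigr)=\Ex\bigl(W(1)^4\bigr)\,\Ex\bigl((W(2)-W(1))^2\bigr)=3\ \neq\ 1=\Ex\bigl(\Delta_1^2\bigr)\,\Ex\bigl(\Delta_2^2\bigr), \]
so the increments are uncorrelated but not independent; as you say, independence holds only in the special case of deterministic coefficients. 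Your conditioning argument via the tower property therefore proves strictly more than the paper's proof establishes as written, since the latter only shows $\Ex(Y(t)-Y(s))=0$ and the independent-increments hypothesis it relies on is false. What the paper's route was meant to buy is the avoidance of conditional expectation, which the author flags as desirable in this computable setting (conditional expectation via Radon--Nikodym is uncomputable); your route can be reconciled with that aim, because the only conditional expectations you take are of products with an explicit independent factorisation, and each step can be restated as an unconditional identity $\Ex\bigl(g\cdot X_i(W(b_i)-W(a_i))\bigr)=0$ for every bounded $\mathcal{F}_s$-measurable $g$ --- precisely the orthogonality-to-the-past property in the paper's own language of independence from a sub-topology. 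One further point worth recording: the independence claim is load-bearing later in the paper, since the proof of the discrete submartingale inequality also invokes independence of $X_n-X_k$ from the past; your weaker-but-correct conclusion indicates that this downstream argument likewise needs the conditional (Doob-style) formulation, or at least the orthogonality form just described.
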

\begin{proof}
Let $Y$ be the integrated process $Y(t) = \int_{0}^{t} X(s) dW(s) =  Y(s) + \int_{s}^{t} X(r) \,dW(r)$. 
Then
\begin{multline*} \textstyle  Y(t)  =  Y(s) + X_{m(s)} \bigr( W(t_{m(s)+1})-W(s)\bigr)  + \sum_{i=m(s)}^{m(t)-1}  X_i \bigl(W(t_{i+1})-W(t_i)\bigr) Y(s) \\\textstyle +  X_{m(t)} \bigl(W(t)-W(t_{m(t)})\bigr) . \end{multline*}
Since $W(t_3)-W(t_2)$ is independent of $X(t_1)$ whenever $t_1<t_2<t_3$, we have $\Ex(Y(t)) = \Ex(Y(s))$ or $\Ex(Y(t)-Y(s))=0$.
\end{proof}

\begin{lemma}
\label{lem:martingalejensen}
If $(X_k)_{k=1,\ldots,m}$ is a martingale and $\phi$ is convex, then $\bigl(\phi(X_k)\bigr)_{k=1,\ldots,m}$ is a submartingale.
Similarly, if $X(\cdot)$ is a martingale, then $\phi(X(\cdot))$ is a submartingale.
\end{lemma}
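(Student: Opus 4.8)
The plan is to reduce the statement to the conditional form of Jensen's inequality, which in this setting follows from writing the convex function as a supremum of affine minorants and then invoking linearity and monotonicity of the conditional expectation. Since $\phi$ is convex and continuous on $\R$, it is the pointwise supremum $\phi(x)=\sup_{j\in\N}\ell_j(x)$ of a countable family of affine functions $\ell_j(x)=\alpha_j x+\beta_j$ with $\ell_j\leq\phi$; concretely I would take supporting lines at a dense set of rational base points, which recovers $\phi$ by continuity and convexity. Each $\ell_j$ is thus a computable affine minorant of $\phi$.

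First I would treat the discrete case. For each $j$, linearity of the conditional expectation together with the martingale identity $\Ex(X_{k+1}\mid\calF_k)=X_k$ gives
\[ \Ex\bigl(\ell_j(X_{k+1})\mid\calF_k\bigr)=\alpha_j\,\Ex(X_{k+1}\mid\calF_k)+\beta_j=\alpha_j X_k+\beta_j=\ell_j(X_k). \]
Since $\ell_j(X_{k+1})\leq\phi(X_{k+1})$ pointwise, monotonicity of the conditional expectation yields $\ell_j(X_k)=\Ex(\ell_j(X_{k+1})\mid\calF_k)\leq\Ex(\phi(X_{k+1})\mid\calF_k)$. As the right-hand side does not depend on $j$, taking the supremum over $j$ gives $\phi(X_k)=\sup_j\ell_j(X_k)\leq\Ex(\phi(X_{k+1})\mid\calF_k)$, which is exactly the submartingale inequality. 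The integrability clause $\Ex(|\phi(X_k)|)<\infty$ in the definition of submartingale is the remaining hypothesis to record, and it holds whenever $\phi(X_k)$ is integrable, in particular when $\phi$ has at most affine growth.

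The continuous-time case is handled identically: I would replace conditioning on $\calF_k$ by conditioning on $X_s$, use $\Ex(X_t\mid X_s)=X_s$ for $t>s$, and conclude $\Ex(\phi(X_t)\mid X_s)\geq\phi(X_s)$, so that $\phi(X(\cdot))$ is a submartingale.

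The main point to verify is that linearity and monotonicity of the conditional expectation are genuinely available in the framework of the paper. Linearity is immediate from the definition $\Ex(Y\mid\calX)(\omega)=\Ex(Y\mid\calX(\omega))$ together with linearity of ordinary expectation; monotonicity follows pointwise, since each $Y\mid\calX(\omega)$ is an ordinary random variable and $\Ex(Z)=\int_{0}^{\infty}\Pr(Z>x)\,dx$ is order-preserving. I expect the apparent obstacle to be the interchange of the conditional expectation with the countable supremum in the last step, but this is only a one-sided inequality obtained by applying monotonicity to each $\ell_j\leq\phi$ separately and then taking the supremum on the side that is independent of $j$, so no monotone-convergence theorem for the conditional expectation is actually required.
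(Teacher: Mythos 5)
Your proof is correct, and it rests on the same two pillars as the paper's: the conditional Jensen inequality and the martingale identity $\Ex(X_{k+1}\mid\calF_k)=X_k$. The difference is one of depth: the paper's proof is a single line that simply cites Jensen's inequality, $\Ex(\phi(X_{k+1})\mid\calF_k)\geq\phi(\Ex(X_{k+1}\mid\calF_k))=\phi(X_k)$ (the paper in fact has a typo here, writing $\phi(\Ex(X_k))$ where $\phi(X_k)$ is meant), whereas you reprove conditional Jensen from scratch via the supporting-line representation $\phi=\sup_j\ell_j$ with countably many affine minorants. That expansion is the standard textbook derivation, but it earns its keep in this paper's setting: the paper's conditional expectation is not the classical Radon--Nikodym object but is defined pointwise through conditional random variables, so linearity and monotonicity are exactly the properties one can justify directly, as you observe, and your final step needs only a one-sided countable supremum (computable in $\tpHl$) rather than any conditional monotone convergence theorem. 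You also flag the integrability requirement $\Ex(|\phi(X_k)|)<\infty$, which the paper's definition of submartingale demands but its proof silently assumes; in the paper's actual application (Lemma~\ref{lem:discreteintegrablemartingaleinequality}, where $\phi(x)=|x|^2$ and the martingale is square-integrable) this hypothesis is satisfied, so nothing downstream is affected. In short: same skeleton, but your version is self-contained and better adapted to the paper's constructive framework, at the cost of length.
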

\begin{proof}
By Jensen's inequality, $\Ex(\phi(X_{k+1})|\mathcal{F}_k) \geq  \phi(\Ex(X_{k+1}|\mathcal{F}_k)) = \phi(\Ex(X_k))$.
\end{proof}

We now give some estimates known as \emph{martingale inequalities} which will allow us to compute limits of processes in $C([0,T];\R)$.

\begin{lemma}[Discrete submartingale inequality]
\label{lem:discretesubmartingale}
Let $(X_k)$ be a discrete positive submartingale, and $Y_n=\max_{k=1,\ldots,n} X_k$.
Then for any $\lambda>0$,
\begin{equation} \label{eq:discretesubmartingaleinequality}
 \lambda \Pr\bigl( Y_n \geq \lambda \bigr) \leq \Ex\bigl(X_n \, I[{Y_n\geq\lambda}] \bigr) \leq \Ex X_n . \end{equation}
\end{lemma}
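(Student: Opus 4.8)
The plan is to run the classical Doob maximal-inequality argument, decomposing the event $\{Y_n\geq\lambda\}$ according to the first time the process reaches the level $\lambda$. Note that the two asserted inequalities are of quite different natures: the right-hand bound $\Ex(X_n\,I[Y_n\geq\lambda])\leq\Ex X_n$ is immediate from positivity, since $X_n\geq0$ and $0\leq I[Y_n\geq\lambda]\leq1$, so all the work goes into the left-hand bound, which is where the submartingale structure enters.

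First I would introduce the disjoint \emph{first-passage} events
\( B_k = \{X_1<\lambda,\ \ldots,\ X_{k-1}<\lambda,\ X_k\geq\lambda\} \)
for $k=1,\ldots,n$. These are pairwise disjoint, each $B_k$ is $\mathcal{F}_k$-measurable, and their union is exactly $\{Y_n\geq\lambda\}$, so that $I[Y_n\geq\lambda]=\sum_{k=1}^{n}I[B_k]$ and $\Pr(Y_n\geq\lambda)=\sum_{k=1}^{n}\Pr(B_k)$. On $B_k$ we have $X_k\geq\lambda$, hence $\lambda\,I[B_k]\leq X_k\,I[B_k]$ pointwise, and taking expectations gives $\lambda\Pr(B_k)\leq\Ex(X_k\,I[B_k])$.

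The key step is then to replace $X_k$ by $X_n$. Iterating the submartingale property $\Ex(X_{j+1}|\mathcal{F}_j)\geq X_j$ yields $\Ex(X_n|\mathcal{F}_k)\geq X_k$ for $n\geq k$, and since $I[B_k]$ is $\mathcal{F}_k$-measurable and nonnegative, the ``take out what is known'' identity gives
\[ \Ex(X_k\,I[B_k]) \leq \Ex\bigl(\Ex(X_n|\mathcal{F}_k)\,I[B_k]\bigr) = \Ex(X_n\,I[B_k]). \]
Summing over $k$ and using the decomposition of $\{Y_n\geq\lambda\}$ produces $\lambda\Pr(Y_n\geq\lambda)\leq\Ex(X_n\,I[Y_n\geq\lambda])$, and combining with the trivial positivity bound completes the chain.

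The step I expect to be the main obstacle is justifying this conditional-expectation manipulation within the present framework, since the events $B_k$ and the threshold indicators $I[X_j\geq\lambda]$ are \emph{not} computable random variables: the level sets are closed rather than open, so these objects cannot be treated as elements of $\tpRV$. As was observed after the definition of indicator functions, however, such indicators still induce well-defined and computable valuations, probabilities, and expectations, which is precisely what the inequality quantifies. The argument must therefore be carried out purely at the level of probabilities and expectations — invoking the $\mathcal{F}_k$-measurability of $B_k$ and the submartingale property — without constructing any new random variable from the discontinuous indicators.
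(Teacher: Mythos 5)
Your proposal follows the same classical skeleton as the paper's proof (first-passage decomposition, the bound $\lambda I \leq X_k I$ on the passage event, pushing $X_k$ up to $X_n$, summing), but it stops short at exactly the two points where the paper's proof does its real work, and these are genuine gaps rather than routine details. First, your first-passage events $B_k = \{X_1<\lambda,\ldots,X_{k-1}<\lambda,\ X_k\geq\lambda\}$ mix strict and non-strict inequalities, so they are \emph{neither open nor closed}; in the valuation framework of this paper, neither $\Pr(B_k)$ nor $\Ex(X_n\,I[B_k])$ is then even well-defined as a lower or upper horizontal integral --- the paper flags precisely this before its proof. Your closing remark only notes that the level sets $\{X_j\geq\lambda\}$ are closed, which misses that the \emph{combination} defining $B_k$ has no definite semicontinuity at all. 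The paper's fix is to introduce a gap: for $\delta>0$ it uses the events $A_{\delta,k} = \bigwedge_{i=1}^{k-1}(X_i\leq\lambda-\delta)\wedge(X_k\geq\lambda)$, which are closed, so that $\chi_{A_{\delta,k}}$ is upper semicontinuous and the required expectations exist as upper horizontal integrals; the full event $A=\{Y_n\geq\lambda\}$ is then recovered as $\bigcup_{\delta>0}A_\delta$, with $\Pr(A_\delta)\to\Pr(A)$ as $\delta\to0$ justified by Lemma~\ref{lem:decreasingopen}. This $\delta$-regularization and limiting step is entirely absent from your plan.

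Second, your key inequality $\Ex(X_k\,I[B_k])\leq\Ex(X_n\,I[B_k])$ is obtained from $\Ex(X_n|\mathcal{F}_k)\geq X_k$ together with ``take out what is known,'' i.e.\ genuine conditional expectation with respect to a sub-$\sigma$-algebra. That machinery is not effectivized in this paper --- conditional expectation via Radon--Nikodym is uncomputable, and the paper's notion of conditioning is a different object --- which is exactly why the paper earlier gives the independent-increments criterion for (sub)martingales ``avoiding the use of conditional expectation.'' In the paper's proof the corresponding step uses only that $X_n-X_k$ is independent of $X_1,\ldots,X_k$, hence of $A_{\delta,k}$, so that
\begin{equation*}
\Ex(X_n\,\chi_{A_{\delta,k}}) = \Ex(X_n-X_k)\,\Ex(\chi_{A_{\delta,k}}) + \Ex(X_k\,\chi_{A_{\delta,k}}) \geq \Ex(X_k\,\chi_{A_{\delta,k}}),
\end{equation*}
which needs only $\Ex(X_n-X_k)\geq0$ and a product rule for independent quantities. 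You correctly diagnose that ``the argument must be carried out purely at the level of probabilities and expectations,'' but naming the obstacle is not the same as overcoming it: without the $\delta$-separated closed events and the independence-based substitute for conditioning, the argument does not go through in the framework in which the lemma is stated. (Your treatment of the right-hand inequality $\Ex(X_n\,I[Y_n\geq\lambda])\leq\Ex X_n$ via positivity is fine and consistent with the paper.)
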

\noindent
Note that the expression $\Ex\bigl(X_n \, I[{Y_n\geq\lambda}])$ makes sense since the function $\max:\R^n\fto \R$ is computable and the characteristic function $\xi_{[\lambda,\infty)}$ is computable $\R\fto[0,1]_>$ for a given $\lambda$.
Hence $\Ex\bigl(X_n \, I[{Y_n\geq\lambda}]\bigr)$ is computable as the upper integral $\Ex_>(f(X_1,\ldots,X_n))$ where $f=\chi_{\geq\lambda}\circ\max$.

The basic idea of the proof is to consider events $X_k\geq\lambda$ and for all $i<k$, $X_i<\lambda$.
However, these sets of events are neither closed nor open, so we cannot directly probabilities or expectations.
\begin{proof}
Let $A$ be the event $\max_{k=1,\ldots,n}X_k\geq\lambda$.
For fixed $\delta>0$, let $A_{\delta,k}$ be the event $\bigwedge_{i=1}^{k-1} (X_i \leq \lambda-\delta) \wedge (X_k \geq \lambda)$, and $A_\delta$ the event $\bigcup_{k=1}^{n}A_{\delta,k}$.
Note that $A=\bigcup_{\delta > 0}A_\delta$.
Since $A_{\delta,k}$ holds on a closed set, the characteristic function is upper semicontinuous and we can consider the upper horizontal integral of $\chi_{A_{\delta,k}}$
Since $X_n-X_k$ is independent of $X_1,\ldots,X_k$ for $k<n$, and hence independent of $A_{\delta,k}$, we have
\[ \begin{aligned} & \textstyle \Ex(X_n \chi_{A_{\delta,k}}) = \Ex((X_n-X_k)\chi_{A_{\delta,k}})+\Ex(X_k\chi_{A_{\delta,k}}) \\ & \qquad \qquad \textstyle =\Ex(X_n-X_k)\Ex(\chi_{A_{\delta,k}})+\Ex(X_k\chi_{A_{\delta,k}})\geq \Ex(X_k\chi_{A_{\delta,k}}) . \end{aligned}  \]
Then summing probabilities over each $A_{\delta,k}$ gives
\[ \begin{aligned} & \textstyle \lambda \Pr(X \in A_{\delta}) \leq \sum_{k=1}^{n} \lambda \Pr(A_{\delta,k}) = \sum_{k=1}^{n} \Ex(\lambda \chi_{A_{\delta,k}}) \\ & \qquad \textstyle  \leq \sum_{k=1}^{n} \Ex(X_k \chi_{A_{\delta,k}}) \leq \sum_{k=1}^{n} \Ex(X_n\chi_{A_{\delta,k}}) ) = \Ex\bigl( \sum_{k=1}^{n}X_n\chi_{A_{\delta,k}}\bigr) = \Ex\bigl( X_n \chi_{A_{\delta}} ) \end{aligned} \]
By Lemma~\ref{lem:decreasingopen}, $\Pr(X\in A_\delta)\to\Pr(X\in A)$ as $\delta\to 0$, and the result follows.
\end{proof}
We can also show the standard result that if $X_t$ is a martingale, and $\Ex|X_T|^\alpha<\infty$ for some $\alpha\geq1$, then
\( \Pr\bigl( {\textstyle\max_{t\leq T}} |X_t|  \geq \epsilon \bigr) \leq \frac{1}{\epsilon^\alpha} \Ex|X_T|^\alpha , \)
but will not need this in the sequel.
Instead, we use the following extension to square-integrable martingales:
\begin{lemma}[Discrete integrable martingale inequality]%; Friedman Theorem~4.3.7]
\label{lem:discreteintegrablemartingaleinequality}
Let $(X_k)_{k=1,\ldots,n}$ be a discrete martingale.
%Then for any $\alpha>1$, \begin{equation} \label{eq:continuousintegrablemartingaleinequality}  \Ex\bigl( {\textstyle\max_{t\in[0,T]}} |X(t)|^\alpha\bigr) \leq \Bigl(\frac{\alpha}{1-\alpha}\Bigr)^{\!\alpha} \Ex|X(T)|^\alpha . \end{equation}
Then
\begin{equation}  \label{eq:discreteintegrablemartingaleinequality}  \Ex\bigl( {\textstyle\max_{k=1,\ldots,n}} |X_k|^2\bigr) \leq 4 \Ex(X_n^2) . \end{equation}
\end{lemma}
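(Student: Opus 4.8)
The plan is to adapt the classical Doob $L^2$ maximal inequality, combining the layer-cake expectation formula of Theorem~\ref{thm:expectation} with the submartingale inequality of Lemma~\ref{lem:discretesubmartingale} and the Cauchy--Schwarz inequality stated earlier. First I would reduce to the positive submartingale $|X_k|$: since $(X_k)$ is a martingale and $\phi(x)=|x|$ is convex, Lemma~\ref{lem:martingalejensen} shows that $(|X_k|)_{k=1,\ldots,n}$ is a positive submartingale. Writing $Y_n=\max_{k=1,\ldots,n}|X_k|$, the quantity to be bounded is $\Ex(Y_n^2)$. I would note at the outset that the right-hand side tacitly assumes $X_n$ is square-integrable; since $|X_k|^2$ is a submartingale we then have $\Ex(X_k^2)\leq\Ex(X_n^2)<\infty$ for all $k\leq n$, and the crude bound $Y_n^2\leq\bigl(\sum_{k=1}^{n}|X_k|\bigr)^2\leq n\sum_{k=1}^{n}X_k^2$ gives $\Ex(Y_n^2)<\infty$. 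Establishing this finiteness first is what will later license the division step.

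Next I would invoke the submartingale inequality applied to $|X_k|$: by Lemma~\ref{lem:discretesubmartingale}, for each $\lambda>0$,
\[ \lambda\,\Pr(Y_n\geq\lambda)\leq\Ex\bigl(|X_n|\,I[Y_n\geq\lambda]\bigr). \]
Then, using the corollary to Theorem~\ref{thm:expectation} with $\alpha=2$ applied to the positive random variable $Y_n$, I would write $\Ex(Y_n^2)=\int_0^\infty 2\lambda\,\Pr(Y_n\geq\lambda)\,d\lambda$ and substitute the pointwise-in-$\lambda$ bound above to obtain
\[ \Ex(Y_n^2)\leq\int_0^\infty 2\,\Ex\bigl(|X_n|\,I[Y_n\geq\lambda]\bigr)\,d\lambda=\Ex\Bigl(|X_n|\int_0^\infty 2\,I[Y_n\geq\lambda]\,d\lambda\Bigr)=2\,\Ex\bigl(|X_n|\,Y_n\bigr), \]
the middle equality being an application of Fubini (Tonelli) to the positive integrand, with the inner integral evaluating to $2Y_n$.

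Finally I would close with Cauchy--Schwarz (the stated norm inequality with $p=q=2$, so $pq/(p+q)=1$), giving $2\,\Ex(|X_n|\,Y_n)=2\,\|X_nY_n\|_1\leq 2\,\|X_n\|_2\,\|Y_n\|_2=2\sqrt{\Ex(X_n^2)}\sqrt{\Ex(Y_n^2)}$, so that $\Ex(Y_n^2)\leq 2\sqrt{\Ex(X_n^2)}\sqrt{\Ex(Y_n^2)}$. If $\Ex(Y_n^2)=0$ the conclusion is immediate; otherwise, since $\Ex(Y_n^2)$ is finite and positive, I divide by $\sqrt{\Ex(Y_n^2)}$ and square to obtain $\Ex(Y_n^2)\leq 4\,\Ex(X_n^2)$. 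The main obstacle is the Fubini interchange of the $\lambda$-integral with the expectation in the computable setting, where positive expectations are computed as lower integrals into $\tpHl$; I expect this to go through by monotone approximation of $Y_n$ and $|X_n|$ by continuous random variables, but it (together with the finiteness required to justify the division) is the step demanding care rather than the surrounding algebra.
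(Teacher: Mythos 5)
Your proposal is correct and follows essentially the same route as the paper's own proof: the maximal inequality of Lemma~\ref{lem:discretesubmartingale} applied with $Z=X_n$, the layer-cake identity $Y^2=2\int_0^\infty\lambda\,I[Y\geq\lambda]\,d\lambda$ with a Fubini interchange to get $\Ex(Y^2)\leq 2\,\Ex(YZ)$, then Cauchy--Schwarz and division by $(\Ex Y^2)^{1/2}$. Your version is in fact slightly more careful than the paper's, which silently uses $X_n$ where the positive submartingale $|X_n|$ is needed and omits the finiteness check of $\Ex(Y^2)$ that licenses the division.
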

\noindent
\begin{proof}
Define $Y=\max_{k=1,\ldots,n} |X_k|$ and $Z=X_n$.
By the stronger form of the submartingale inequality of Lemma~\ref{lem:discretesubmartingale}, we have $\lambda \Pr(Y \geq \lambda) \leq \Ex( I[Y \geq \lambda] Z )$.
Since $Y=\int_{0}^{\infty} I[Y\geq\lambda] \, d\lambda$ and $ Y^2 = 2 \int_{0}^{\infty} \lambda \, I[Y\geq\lambda] \, d\lambda$, we have
\[  \Ex Y^2 = 2 \Ex \int_{0}^{\infty} \lambda \, I[Y\geq\lambda] \, d\lambda \leq 2 \Ex \int_{0}^{\infty} I[Y\geq \lambda] \, Z d\lambda = 2 \Ex \Bigl( Z \int_{0}^\infty I[Y\geq\lambda] d\lambda \Bigr) = 2\Ex(Z Y) . \]
\Holder's inequality gives
\( \Ex(YZ) \leq  \bigl(\Ex(Y^2)\bigr)^{1/2} \bigl(\Ex(Z^2)\bigr)^{1/2} . \)
Therefore we have
\( \Ex(Y^2) \leq 2 \Ex(YZ) \leq 2 \bigl(\Ex(Y^2)\bigr)^{1/2} \bigl(\Ex(Z^2)\bigr)^{1/2}. \)
Hence \( (\Ex Y^2)^{1/2} \leq 2 (\Ex Z^2 )^{1/2}  \) yielding $\Ex Y^2 \leq 4\Ex Z^2$.
\end{proof}

The results above on discrete (sub)martingales extend to continuous processes.
We will require:
\begin{theorem}[Integrable martingale inequality]%; Friedman Theorem~4.3.7]
Let $X(\cdot)$ be a martingale.
%Then for any $\alpha>1$, \begin{equation} \label{eq:continuousintegrablemartingaleinequality}  \Ex\bigl( {\textstyle\max_{t\in[0,T]}} |X(t)|^\alpha\bigr) \leq \Bigl(\frac{\alpha}{1-\alpha}\Bigr)^{\!\alpha} \Ex|X(T)|^\alpha . \end{equation}
Then
\begin{equation}  \label{eq:continuousintegrablemartingaleinequality}  \Ex\bigl( {\textstyle\max_{t\in[0,T]}} |X(t)|^2\bigr) \leq 4 \Ex(X(T)^2) . \end{equation}
\end{theorem}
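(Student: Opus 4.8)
The plan is to reduce the continuous inequality~\eqref{eq:continuousintegrablemartingaleinequality} to the discrete inequality~\eqref{eq:discreteintegrablemartingaleinequality} of Lemma~\ref{lem:discreteintegrablemartingaleinequality} by approximating the maximum over the whole interval $[0,T]$ by maxima over finite dyadic partitions, and then passing to the limit via monotone convergence. First I would fix the refining dyadic partitions $t_{n,k}=kT/2^n$ for $k=0,1,\ldots,2^n$, and set $M_n=\max_{k=0,\ldots,2^n}|X(t_{n,k})|$. For each fixed $n$, the finite sequence $\bigl(X(t_{n,0}),X(t_{n,1}),\ldots,X(t_{n,2^n})\bigr)$ is a discrete martingale, since the martingale property $\Ex(X(t)\mid X(s))=X(s)$ for $t>s$ holds at the dyadic times. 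Moreover $\Ex(X(t_{n,k})^2)\leq\Ex(X(T)^2)<\infty$, because $X(\cdot)^2$ is a submartingale by Lemma~\ref{lem:martingalejensen}, so $t\mapsto\Ex(X(t)^2)$ is nondecreasing. Applying Lemma~\ref{lem:discreteintegrablemartingaleinequality} to this discrete martingale gives
\[ \Ex(M_n^2) = \Ex\bigl( {\textstyle\max_{k=0,\ldots,2^n}} |X(t_{n,k})|^2 \bigr) \leq 4\,\Ex\bigl(X(t_{n,2^n})^2\bigr) = 4\,\Ex(X(T)^2) . \]

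Since the partitions are nested, $M_n$ is nondecreasing in $n$; and because a continuous-time process over $[0,T]$ is by definition a random variable taking values in $C([0,T];\R)$, each sample path is continuous, hence uniformly continuous on the compact interval $[0,T]$. Therefore the dyadic points become dense and $M_n\nearrow\max_{t\in[0,T]}|X(t)|$ pointwise on each sample path, so that $M_n^2$ increases to $\bigl(\max_{t\in[0,T]}|X(t)|\bigr)^2=\max_{t\in[0,T]}|X(t)|^2$. By monotone convergence — equivalently, by continuity of the lower integral defining $\Ex$ (Theorem~\ref{thm:expectation}) on the increasing sequence of positive random variables $M_n^2$, which in turn rests on the continuity property of distributions — I obtain
\[ \Ex\bigl( {\textstyle\max_{t\in[0,T]}} |X(t)|^2 \bigr) = \sup_{n} \Ex(M_n^2) \leq 4\,\Ex(X(T)^2) , \]
which is the required bound.

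The main obstacle is the passage to the limit: one must verify both that the discrete maxima $M_n$ increase to the genuine pathwise maximum and that the expectation commutes with this increasing limit. The first point is handled by continuity of the sample paths, which gives the pointwise monotone convergence $M_n\nearrow\max_{t\in[0,T]}|X(t)|$; the second by the monotone convergence property of the lower expectation, which follows from the continuity of valuations established earlier in the paper. The uniform bound $4\,\Ex(X(T)^2)$ is preserved under the supremum, so no integrability difficulty arises beyond the assumed finiteness of $\Ex(X(T)^2)$.
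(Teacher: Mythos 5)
Your proof is correct and follows essentially the same route as the paper's: discretize the time interval by a dense set of points (the paper takes an arbitrary dense sequence, you take nested dyadic partitions), apply the discrete inequality of Lemma~\ref{lem:discreteintegrablemartingaleinequality} at each finite stage, and pass to the limit. You are in fact somewhat more careful than the paper, which asserts $\Ex\bigl(\max_{t\in[0,T]}|X(t)|^2\bigr)=\lim_{n}\Ex\bigl(\max_{k\leq n}|X(t_k)|^2\bigr)$ without comment, whereas you justify both the pointwise monotone convergence $M_n\nearrow\max_{t\in[0,T]}|X(t)|$ via path continuity and the interchange of limit and expectation via monotone convergence, and you arrange $t_{n,2^n}=T$ so the discrete bound lands exactly on $4\,\Ex(X(T)^2)$.
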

\noindent
\begin{proof}
Let $\{t_1,t_2,\ldots\}$ be a dense subset of $[0,T]$.
Then by Lemma~\ref{lem:discreteintegrablemartingaleinequality}, $\Ex\bigl(\max_{t\in[0,T]} |X(t)|^2\bigr) = \lim_{n\to\infty} \Ex\bigl(\max_{k=1,\ldots,n}|X(t_{k})|^2\bigr) \leq \lim_{n\to\infty} 4\Ex\bigl(X(T)^2\bigr) = 4\Ex\bigl(X(T)^2\bigr)$.
\end{proof}
\omittext{
\begin{proof}
Define $Y=\max_{t\in[0,T]} |X(t)|$ and $Z=X(T)$.
By the stronger form of the submartingale inequality of Lemma~\ref{lem:continuoussubmartingale}, we have $\lambda \Pr(Y \geq \lambda) \leq \Ex( I[Y \geq \lambda] Z )$
Since $ Y^2 = 2 \int_{0}^{\infty} \lambda \, I[Y>\lambda] \, d\lambda$, we have
\[  \Ex Y^2 = 2 \Ex \int_{0}^{\infty} \lambda \, I[Y>\lambda] \, d\lambda \leq 2 \Ex \int_{0}^{\infty} I[Y\geq \lambda] \, Z d\lambda = 2 \Ex \Bigl( Z \int_{0}^\infty I[Y\geq\lambda] d\lambda \Bigr) = 2\Ex(Z Y) . \]
\Holder's inequality gives
\( \Ex(YZ) \leq  \bigl(\Ex(Y^2)\bigr)^{1/2} \bigl(\Ex(Z^2)\bigr)^{1/2} . \)
Therefore we have
\( \Ex(Y^2) \leq 2 \Ex(YZ) \leq 2 \bigl(\Ex(Y^2)\bigr)^{1/2} \bigl(\Ex(Z^2)\bigr)^{1/2}. \)
Hence \( (\Ex Y^2)^{1/2} \leq 2 (\Ex Z^2 )^{1/2}  \) yielding $\Ex Y^2 \leq 4\Ex Z^2$.
\end{proof}
}%\omit

Combining these results, we see that for nonanticipative step processes $X(\cdot)$, the \Ito\ integral $\int_{s=0}^{t} X(s)dW(s)$ is computable and is a continuous martingale.
The \Ito\ equality
\begin{equation*}  \Ex\bigl( {\textstyle\int_{0}^{T} X(t)dW(t)} \bigr)^2 = \Ex\bigl({\textstyle\int_{0}^{T} X(t)dt} \bigr)^2 . \end{equation*}
shows that the integrated process is a square-integrable random variable, and the martingale inequality
\begin{equation*}   \Ex\bigl( {\textstyle\max_{t\in[0,T]}\bigl|\int_{0}^{t} X(s)dW(s)\bigr|} \bigr)^2 \leq 4 \Ex\bigl( {\textstyle\int_{0}^{T} X(t)dW(t)} \bigr)^2 \end{equation*}
gves uniform bounds on the integrated process.
Combining these inequalities gives
\begin{equation*}  \Ex\bigl( {\textstyle\max_{t\in[0,T]}\bigl|\int_{0}^{t} X(s)dW(s)\bigr|} \bigr)^2 \leq 4 \Ex\bigl({\textstyle\int_{0}^{T} X(t)dt} \bigr)^2 . \end{equation*}
In terms of norms on the processes, we have
\begin{equation*} \textstyle ||\!\int\!X dW||_{\infty,2} \leq 2 || X ||_{2,2} . \end{equation*}
Now if $X_n$ is a Cauchy sequence of step processes converging effectively to $X_\infty$ in the $||\cdot||_{2,2}$ norm, we have form $m\geq n$,
\begin{equation*} \textstyle ||\!\int\!X_m\!-\!X_n\,dW||_{\infty,2} \leq 2 || X_m-X_n ||_{2,2} \leq 2 \bigl( || X_m-X_\infty ||_{2,2} + || X_n-X_\infty ||_{2,2} \bigr). \end{equation*}
Hence $\int\!X_ndW$ converges effectively in the $||\cdot||_{\infty,2}$ norm.
Further, for continuous processes
\begin{equation*}
\textstyle \int_{0}^{T}X(t)^2\,dt \leq T \,\max_{t\in[0,T]}(X(t)^2) = T(\max_{t\in[0,T]}|X(t)|)^2 ,
\end{equation*}

\begin{theorem}[Computability of the \Ito\ integral]
\label{thm:itocomputability}
If $X$ is a square-integrable step process, then $\int\!XdW$ a continuous process computable from $X$.
If $(X_n)_{n\in\N}$ a sequence of step processes converging effectively to $X_\infty$ in the $||\cdot||_{2,2}$ norm, then $\int\!X_ndW$ is a Cauchy sequence converging effectively in the $||\cdot||_{\infty,2}$ norm to a process which we define as $\int\!X_\infty\,dW$.
Further, if $X$ is a continuous process, then
\[ \textstyle  ||\int\!XdW||_{\infty,2} \leq 2\sqrt{T} \, ||X||_{\infty,2} . \]
\end{theorem}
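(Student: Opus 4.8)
The plan is to assemble the theorem from the three ingredients already in place: the pathwise computability of the \Ito\ integral of a step process, the \Ito\ equality~\eqref{eq:stepitoisometry}, and the continuous integrable martingale inequality~\eqref{eq:continuousintegrablemartingaleinequality}. The first assertion is essentially immediate: by the lemma that the \Ito\ integral of a step process is computable as a continuous process, $\int X\,dW$ is computable in $\tpCts([0,T];\R)$ from the data $(X_i,t_i)$ defining $X$, while square-integrability $\Ex(X_i^2)<\infty$ together with the \Ito\ equality guarantees that the integrated process is a square-integrable random variable, i.e. lies in the domain of the $||\cdot||_{\infty,2}$ norm.

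The key quantitative step is the bound $||\int X\,dW||_{\infty,2}\leq 2\,||X||_{2,2}$ for nonanticipative step processes. First I would record that $Y(t)=\int_0^t X(s)\,dW(s)$ is a martingale, by the lemma giving the integrated process independent increments with zero expectation, so the continuous integrable martingale inequality applies and gives $\Ex\bigl(\max_{t\in[0,T]}|Y(t)|^2\bigr)\leq 4\,\Ex(Y(T)^2)$. The \Ito\ equality then rewrites the right-hand side as $4\,\Ex\int_0^T X(t)^2\,dt=4\,||X||_{2,2}^2$, and taking square roots yields the claimed bound, which is precisely the inequality $||\int X\,dW||_{\infty,2}\leq 2\,||X||_{2,2}$ displayed in the preamble.

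For the convergence statement I would use linearity of the \Ito\ integral on step processes: passing to a common refinement of the partitions of $X_m$ and $X_n$, the difference $X_m-X_n$ is again a nonanticipative step process and $\int X_m\,dW-\int X_n\,dW=\int(X_m-X_n)\,dW$. Applying the bound above gives $||\int X_m\,dW-\int X_n\,dW||_{\infty,2}\leq 2\,||X_m-X_n||_{2,2}$, and since $(X_n)$ converges effectively to $X_\infty$ in $||\cdot||_{2,2}$, the right-hand side is controlled by an effective modulus, so $(\int X_n\,dW)_{n\in\N}$ is an effective Cauchy sequence in $||\cdot||_{\infty,2}$. Because the type of square-integrable random variables with values in $\tpCts([0,T];\R)$ arises as an effective completion (Section~4), this sequence has a computable limit, which we take as the definition of $\int X_\infty\,dW$.

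Finally, for a continuous process $X$ the pathwise estimate $\int_0^T X(t)^2\,dt\leq T\max_{t\in[0,T]}X(t)^2$ integrates to $||X||_{2,2}\leq\sqrt{T}\,||X||_{\infty,2}$; approximating $X$ by step processes via Theorem~\ref{thm:computablestepfunction} lifted to random variables by Theorem~\ref{thm:imagerandomvariable}, and passing to the limit in $||\int X\,dW||_{\infty,2}\leq 2\,||X||_{2,2}$ (which survives the limit by continuity of both norms), gives $||\int X\,dW||_{\infty,2}\leq 2\sqrt{T}\,||X||_{\infty,2}$. The only genuinely delicate point is the \emph{effectivity} of the limit: one must verify that the effective $||\cdot||_{2,2}$-rate for $X_n\to X_\infty$ transfers through the factor-$2$ bound to an effective $||\cdot||_{\infty,2}$-rate, so that the completion construction yields a \emph{computable} limit rather than merely an abstract one.
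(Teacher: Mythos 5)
Your proposal matches the paper's own argument essentially step for step: computability of the step-process \Ito\ integral, the \Ito\ equality~\eqref{eq:stepitoisometry} combined with the martingale inequality~\eqref{eq:continuousintegrablemartingaleinequality} to get $||\!\int\! X\,dW||_{\infty,2}\leq 2\,||X||_{2,2}$, linearity over common refinements to make $(\int\! X_n\,dW)$ an effective Cauchy sequence in the completion, and the pathwise estimate $\int_0^T X(t)^2\,dt\leq T\max_{t\in[0,T]}|X(t)|^2$ to convert the bound to $2\sqrt{T}\,||X||_{\infty,2}$ for continuous processes via the step approximations of Theorem~\ref{thm:computablestepfunction}. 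The argument is correct, and your closing remark on transferring the effective $||\cdot||_{2,2}$-modulus through the factor-$2$ bound is exactly the point the paper handles implicitly in its displayed Cauchy estimate.
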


\marginpar{From referee's comments}
\begin{remark}
If $X$ is a continuous computable non-anticipative process such that $||X||_{\infty,2}<\infty$, then $\int X dW$ is computable in $\tpRV(\tpCts([0,1]\fto\tpR))$ from process $X$.
\end{remark}
\begin{remark}
Also, Theorem~\ref{thm:itocomputability}, which asserts the computability of the \Ito\ integral, is
much more general than the author seems to imply.
First, there are some common \Ito\o integrable processes, such as $\sign W(t)$, which are neither step processes nor have continuous (even right-continuous) paths. Moreover, it is implicit in the author's work how to handle these processes computably.
The space of square $W$-integrable processes (up to some basic equivalence) is just the closed subspace of $L^2(\Omega \times [0,T], P \otimes dt)$ spanned by the nonanticipative step processes relative to $W$.
Then it is clear that for any $X$ in this space, $X$ is a function of $W$ , and $X_t(\omega) = X(W(\omega),t)$ is a random variable (computable from $W$ and $X$) where $(X,W)$ has the desired joint distribution with $W$.
\end{remark}
\marginpar{Make explicit the data type of Ito integrable processes.}

\section{Stochastic differential equations}

We now consider stochastic differential equations
\[ dX(t) = f(X(t))\,dt + g(X(t))\,dW(t); \qquad X(0)=X_0 . \]
The integral form is
\[ X(t) = X_0 + \int_{0}^{t} f(X(s))\,ds + \int_{0}^{t} g(X(s))\,dW(s) =: J[X](t). \]
We assume $f,g:\R\fto\R$ are Lipschitzian functions with constants $K$ and $L$, respectively.
We first assume $X_0 \in M^2(\R)$ is square-integrable, though the case of a constant $X_0=x_0$ will actually suffice.

\begin{lemma}
Suppose $f:\R\fto\R$ is a Lipschitzian function with constant $K$, and suppose $Y,Z$ are stochastic processes with $Y(0)=Z(0)$.
Then
\begin{equation} d_{2,\infty}(f(Y),f(Z)) \leq K d_{2,\infty}(Y,Z) . \end{equation}
\end{lemma}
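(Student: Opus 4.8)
The plan is to reduce the estimate to the \emph{pointwise} Lipschitz property of $f$ and then promote it to the process metric using the monotonicity and homogeneity of the underlying norm. Since $f:\R\fto\R$ is Lipschitz it is in particular (uniformly) continuous, so by the continuous mapping theorem (Theorem~\ref{thm:imagerandomvariable}) the compositions $f(Y)$ and $f(Z)$ are again well-defined processes and the quantity $d_{2,\infty}(f(Y),f(Z))$ is meaningful. First I would record the pointwise bound: for almost every sample point $\omega$ and every $t\in[0,T]$,
\[ \bigl| f(Y)(\omega,t) - f(Z)(\omega,t) \bigr| = \bigl| f(Y(\omega,t)) - f(Z(\omega,t)) \bigr| \leq K\,\bigl| Y(\omega,t) - Z(\omega,t) \bigr| , \]
which is immediate from the definition of the Lipschitz constant $K$ and makes no reference to the stochastic structure.

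Next I would transfer this pointwise inequality to the metric. The metric $d_{2,\infty}$ is induced by a norm $||\cdot||_{2,\infty}$ obtained by composing an $L^2$-type and an $L^\infty$-type aggregation in the time and probability variables applied to the absolute value of the process; the precise order of the two aggregations is immaterial here, since any such norm is \emph{monotone}, in the sense that $||U||_{2,\infty}\leq||V||_{2,\infty}$ whenever $|U(\omega,t)|\leq|V(\omega,t)|$ almost everywhere, and is \emph{positively homogeneous}. Applying monotonicity to the displayed bound and then pulling the constant out by homogeneity gives
\[ d_{2,\infty}(f(Y),f(Z)) = || f(Y)-f(Z) ||_{2,\infty} \leq || \, K\,|Y-Z| \, ||_{2,\infty} = K\, || Y-Z ||_{2,\infty} = K\, d_{2,\infty}(Y,Z) , \]
which is exactly the claimed inequality.

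There is no genuine analytic obstacle in this lemma: its entire content is the elementary observation that a pointwise contraction estimate survives passage through any monotone, homogeneous norm. The only points meriting a word of justification are that $f(Y)$ is a legitimate process (covered by Theorem~\ref{thm:imagerandomvariable}) and that $||\cdot||_{2,\infty}$ is monotone, which follows at once from the monotonicity of $\Ex(\cdot)$ and of $\max_{t\in[0,T]}$. I would also note that the hypothesis $Y(0)=Z(0)$ is not used in the estimate itself; it is carried over from the fixed-point setting for the stochastic differential equation, where all candidate processes share the common initial value $X_0$. This lemma is the \emph{easy half} of the contraction argument for $J$: the genuinely substantive companion estimate is the bound on the diffusion term $\int_0^t \bigl(g(Y)-g(Z)\bigr)\,dW$, which is controlled not by a pointwise argument but by the \Ito\ isometry together with the martingale inequality~\eqref{eq:continuousintegrablemartingaleinequality} already established.
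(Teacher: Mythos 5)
Your proof is correct and follows essentially the same route as the paper's: the paper's one-line argument is precisely the pointwise Lipschitz bound $|f(Y(t))-f(Z(t))|\leq K|Y(t)-Z(t)|$ pushed through the monotonicity of $\max_{t\in[0,T]}$ and $\Ex(\cdot)$, with $K$ pulled out by homogeneity. Your additional observations (that $Y(0)=Z(0)$ is unused and that well-definedness of $f(Y)$ rests on Theorem~\ref{thm:imagerandomvariable}) are accurate but supplementary to the same core argument.
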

\begin{proof}
\( \Ex\bigl( {\textstyle\max_{t\in[0,T]}} |f(Y(t))-f(Z(t))| \bigr)^2 \leq \Ex\bigl(  {\textstyle\max_{t\in[0,T]}} K|Y(s)-Z(s)|\bigr)^2 \leq K^2 \Ex\bigl(  {\textstyle\max_{t\in[0,T]}} |Y(s)-Z(s)|\bigr)^2 . \)
\end{proof}
\begin{lemma}[Computability of non-stochastic integrals]
The non-stochastic integral $t\mapsto \int_{0}^{t} X(s)\,ds$ of a continuous stochastic process is computable. Further,
\begin{equation} d_{2,\infty}\bigl({\textstyle \int Y dt},{\textstyle\int Z dt}\bigr) \leq T d_{2,\infty}(Y,Z) . \end{equation}
\end{lemma}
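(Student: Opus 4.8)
The plan is to handle the two assertions separately: the first is a computability claim about the pathwise indefinite-integral operator, and the second is a purely analytic Lipschitz estimate.

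For computability, I would observe that the indefinite-integral operator $I:\tpCts([0,T];\tpRe)\fto\tpCts([0,T];\tpRe)$ sending $\xi$ to the function $t\mapsto\int_0^t\xi(s)\,ds$ is continuous (indeed bounded and linear, with sup-norm bound $T$), and was already noted to be computable at the start of Section~\ref{sec:stochasticintegration}. Since a continuous stochastic process is by definition a random variable with values in $\tpCts([0,T];\tpRe)$, the continuous-mapping theorem (Theorem~\ref{thm:imagerandomvariable}) applies directly: the image $I\circ X$ of a random variable under a continuous map is again a random variable computable from $X$. This gives computability of $t\mapsto\int_0^t X(s)\,ds$ as a continuous process.

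For the estimate, I would argue pathwise and then integrate over $\Omega$. Fix $\omega\in\Omega$ and $t\in[0,T]$. By linearity and the triangle inequality for the ordinary integral,
\[ \Bigl|\int_0^t Y(s)\,ds-\int_0^t Z(s)\,ds\Bigr|=\Bigl|\int_0^t\bigl(Y(s)-Z(s)\bigr)\,ds\Bigr|\leq\int_0^T|Y(s)-Z(s)|\,ds\leq T\max_{s\in[0,T]}|Y(s)-Z(s)|. \]
The right-hand bound does not depend on $t$, so taking the maximum over $t\in[0,T]$ preserves it:
\[ \max_{t\in[0,T]}\Bigl|\int_0^t\bigl(Y(s)-Z(s)\bigr)\,ds\Bigr|\leq T\max_{s\in[0,T]}|Y(s)-Z(s)|. \]
Squaring, taking expectations (which preserves the inequality by monotonicity of $\Ex$), and taking square roots then yields $d_{2,\infty}\bigl(\int Y\,dt,\int Z\,dt\bigr)\leq T\,d_{2,\infty}(Y,Z)$, on unwinding the definition $d_{2,\infty}(Y,Z)^2=\Ex\bigl(\max_{t\in[0,T]}|Y(t)-Z(t)|\bigr)^2$ used in the surrounding lemmas.

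There is no substantial obstacle here: the estimate is elementary, and the only point requiring care is to notice that the pathwise bound is \emph{uniform in $t$}, which is precisely what is needed to control the $\max_t$ sitting inside the expectation. One remark worth flagging is that specialising to $Z\equiv 0$ shows $d_{2,\infty}(\int Y\,dt,0)\leq T\,d_{2,\infty}(Y,0)$, so the integrated process is automatically square-integrable whenever $Y$ is, guaranteeing that $d_{2,\infty}$ is finite and the stated inequality is meaningful.
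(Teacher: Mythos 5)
Your proposal is correct and follows essentially the same route as the paper: the computability claim is reduced to computability of the pathwise (non-random) indefinite integral (the paper leaves the lift to random variables implicit, where you make the appeal to the continuous-mapping theorem explicit), and the estimate is established via the identical chain $\max_t|\int_0^t(Y-Z)|\leq\int_0^T|Y-Z|\leq T\max_t|Y-Z|$ followed by squaring and taking expectations. The uniformity-in-$t$ observation and the square-integrability remark are sound but add nothing beyond the paper's argument.
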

\begin{proof}
Computability is immediate from computability of non-random integrals.
Further, we have
\begin{align*} \Ex\Bigl( \max_{t\in[0,T]} \Bigl| \int_{0}^{t} X(s)\,ds \Bigr| \Bigr)^2
    &\leq \Ex\Bigl( \max_{t\in[0,T]}   \int_{0}^{t} |X(s)|\,ds \Bigr)^2
     \leq \Ex\Bigl(\int_{0}^{T} |X(t)|\,dt \Bigr)^{\!2} \\
    &\leq \Ex\bigl( T\,{\textstyle\max_{t\in[0,T]}}|X(t)|\bigr)^2
     =    T^2\, \Ex\bigl({\textstyle\max_{t\in[0,T]}} |X(t)|\bigr)^2 . \qedhere
\end{align*}
\end{proof}
%\[ d_{2,\infty}\bigl({\textstyle t\mapsto\int_{0}^{t}Y(s)ds},{\textstyle t\mapsto\int_{0}^{t}Z(s)ds}\bigr) \leq T d_{2,\infty}(Y,Z) . \]
For stochastic integrals, we recall from Section~\ref{sec:stochasticintegration} that
\begin{align*}
  \Ex\Bigl( \max_{t\in[0,T]} \Bigl| \int_{0}^{t} X(s)\,dW(s) \Bigr| \Bigr)^2
%    &\leq  4\,\Ex\!\int_{0}^{T} X(s)^2\,ds
%     \leq  4\,\Ex\!\int_{0}^{T} \bigl({\textstyle\max_{r\in[0,T]}} X(r)\bigr)^2\,ds \\
    &\leq  4T\,\Ex\bigl( {\textstyle\max_{r\in[0,T]}} |X(r)|\bigr)^2\,ds %\leq 4 \Ex \int_{0}^{T} X(s)^2\,ds \sup_{t\in[0,T]}
\end{align*}
so
%\[ d_{2,\infty}\bigl({\textstyle t\mapsto\int_{0}^{t}Y(s)dW(s)},{\textstyle t\mapsto\int_{0}^{t}Z(s)dW(s)}\bigr) \leq 2 \sqrt{T} d_{2,\infty}(Y,Z) . \]
\[ d_{2,\infty}\bigl({\textstyle \int\! Y dW},{\textstyle\int\! Z dW}\bigr) \leq 2 \sqrt{T} d_{2,\infty}(Y,Z) . \]
Now
\begin{equation*}
  J[Y](t)-J[Z](t)  =  \int_{0}^{t} f(Y(s))-f(Z(s)) \,ds + \int_{0}^{t} g(Y(s)) - g(Z(s))\,dW(s)
 %\Bigr( X_0 + \int_{0}^{t} f(Y(s))\,ds + \int_{0}^{t} g(Y(s))\,dW(s) \Bigr) - \Bigr( X_0 + \int_{0}^{t} f(Z(s))\,ds + \int_{0}^{t} g(Z(s))\,dW(s) \Bigr) \\
\end{equation*}
The expected square of the supremum of this quantity is
\[ \begin{aligned}
 & \textstyle d_{2,\infty}(J[Y],J[Z]) = ||J[Y-Z]||_{2,\infty} \leq ||\int\! f(Y)-f(Z)\,dt||_{2,\infty} + ||\int\! g(Y)-g(Z)\,dW ||_{2,\infty} \\
 & \qquad \textstyle = d_{2,\infty}\bigl(\int\!f(Y)dt,\int\!f(Z)dt\bigr)+d_{2,\infty}\bigl(\int\!g(Y) dW,\int\!g(Z) dW\bigr) \\
  & \qquad\qquad  \textstyle \leq T d_{2,\infty}(f(Y),f(Z))+2\sqrt{T} d_{2,\infty}(g(Y),g(Z)) \leq (TK+2\sqrt{T}L)d_{2,\infty}(Y,Z) .
\end{aligned}\]
Hence we have
\[ d_{2,\infty}(J[Y]-J[Z]) \leq (KT+2L\sqrt{T}) \, d_{2,\infty}(Y,Z) .\]
Taking $T<\min(1/2K,1/16L^2)$ gives $(KT+2L\sqrt{T})<1$, so
\[ d_{2,\infty}(J[Y],J[Z]) \leq \kappa\,d_{2,\infty}(Y,Z) \]
where $\kappa<1$.

The integral form of the equation i.e. the Picard operator, is therefore a contraction map for small enough $T$.
Taking $X_0$ to be the constant process, $X_0(t) \equiv X_0$, and setting $X_{n+1} = J[X_{n}]$ for all $n\in\N$ gives an effectively convergent subsequence.
The initial difference is given by
\[ d_{2,\infty}(X_1-X_0)  \leq \kappa \, \bigl(\Ex X_0^2\bigr)^{1/2}  .\]

By joining together computations of the solution for small enough $T$, we obtain:
\begin{theorem}[Computability of Lipschitz stochastic differential equations]
Consider the stochastic differential equation
\[ dX(t) = f(X(t))\, dt + g(X(t)) \, dW(t); \qquad X(0)=X_0 \]
where $f,g:\R\fto\R$ are Lipschitz and $X_0\in R(\R)$.
Then the solution $X(t)$ is computable as a random variable taking values in $C([0,\infty);\R)$.
\end{theorem}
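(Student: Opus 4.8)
The plan is to realise the solution as the fixed point of the Picard operator $J$, obtained by iteration on a short time interval and then extended to $[0,\infty)$ by concatenation. First I would fix $T<\min(1/2K,1/16L^2)$, so that by the contraction estimate already established, $J$ is a contraction with a computable constant $\kappa<1$ on the space of continuous nonanticipative processes on $[0,T]$ under the metric $d_{2,\infty}$. Starting from the constant process $X_0(t)\equiv X_0$ and setting $X_{n+1}=J[X_n]$, each iterate is computable: the non-stochastic integral is computable by the corresponding lemma, the \Ito\ integral by Theorem~\ref{thm:itocomputability}, and $f(X_n)$, $g(X_n)$ by the continuous mapping Theorem~\ref{thm:imagerandomvariable}, since Lipschitz functions are continuous.

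Second, I would deduce effective convergence from the contraction. The estimate $d_{2,\infty}(X_{n+1},X_n)\leq \kappa^n\,d_{2,\infty}(X_1,X_0)$ gives $d_{2,\infty}(X_m,X_n)\leq \tfrac{\kappa^n}{1-\kappa}\,d_{2,\infty}(X_1,X_0)$ for $m\geq n$. Since $\kappa$ and the initial displacement $d_{2,\infty}(X_1,X_0)\leq\kappa\,(\Ex X_0^2)^{1/2}$ are computable, this is an effective Cauchy sequence, and the completion construction yields a computable limit $X_\infty$ valued in $C([0,T];\R)$. This limit is the unique fixed point of $J$, hence the solution on $[0,T]$.

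Third, I would extend across the successive intervals $[jT,(j+1)T]$. On the $j$-th block the equation is driven by the increments $W(t)-W(jT)$, which are independent of $\calF_{jT}$, and the initial condition is the terminal value $X(jT)$ of the previous block. For the restart to be legitimate one must check that $X(jT)$ is square-integrable; this follows from a Gr\"onwall-type bound on $\|X\|_{2,\infty}$ over each block, which keeps the $M^2$-norm finite and controlled. Concatenating the continuous sample paths yields a continuous process on $[0,\infty)$, and since $T$ depends only on $K$ and $L$, the stepping is uniform and the whole construction is computable, giving $X\in\tpRV(\tpCts([0,\infty);\R))$.

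The main obstacle is this extension step. One must verify that the terminal value $X(jT)$ is genuinely square-integrable, with a bound uniform in $j$ so that the $M^2$-norm does not blow up as $j\to\infty$; that the Wiener increments on each block are correctly independent of the accumulated past, so that the \Ito\ equality and the martingale inequalities continue to apply blockwise; and that the concatenation is genuinely continuous and the computation uniform across all blocks, so that the limit lives in $\tpRV(\tpCts([0,\infty);\R))$ rather than merely on each finite interval.
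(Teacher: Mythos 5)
Your proposal follows the paper's proof essentially verbatim: Picard iteration under the contraction estimate $d_{2,\infty}(J[Y],J[Z])\leq\kappa\,d_{2,\infty}(Y,Z)$ on a short interval $T$ depending only on the Lipschitz constants, effective convergence of the iterates from the computable $\kappa$ and computable initial displacement $d_{2,\infty}(X_1,X_0)\leq\kappa(\Ex X_0^2)^{1/2}$, and recursive restarting on the blocks $[kT,(k+1)T]$ with $X(kT)$ extracted by projection (the paper routes random initial data through the computable solution operator $\R\fto\tpRV(C([0,T];\R))$ for deterministic $x_0$, whereas you iterate directly with $X_0\in M^2$, but the paper itself remarks that either route suffices). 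One correction to your list of obstacles: a bound on $\Ex\,X(jT)^2$ \emph{uniform in} $j$ is neither needed nor generally available (for $dX=X\,dW$ the second moment grows exponentially in $t$); since $C([0,\infty);\R)$ carries the topology of locally uniform convergence, it is enough that the $M^2$-norm be finite on each block separately, and this is automatic because the fixed point on block $j-1$ lies in the $\|\cdot\|_{\infty,2}$-complete space, so $\Ex\bigl(\max_{t\in[(j-1)T,jT]}|X(t)|^2\bigr)<\infty$ and hence $X(jT)$ is square-integrable, with no Gr\"onwall argument required.
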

\begin{proof}
Let $K$ be the Lipschitz constant for $f,g$, and choose $T<\min(4,1/16K^2)$.
For a given $x_0$, the solution with initial condition $X(0)=x_0$ is computable in $M^2(C([0,T];\R)$, and hence in $R(C([0,T];\R)$.
Hence the solution operator given initial condition $x_0\in\R$ is computable $\R\fto R(C([0,T];\R)$.
For an initial condition which is a probability distribution over $\R$, the solution is computable over $[0,T]$ by Theorem~\ref{thm:itocomputability}.
Then the random variable $X(T)$ is computable by projection.
The result follows by recursively computing $X$ over the intervals $[kT,(k+1)T]$.

\end{proof}

\section{Conclusions}

In this paper, we have developed a theory of probability, random variables and stochastic processes which is sufficiently powerful to effectively compute the solution of stochastic differential equations.
\marginpar{Can we handle conditioning on sub-sigma-algebras?}
The theory uses type-two effectivity to provide an underlying machine model of computation, but is largely developed using type theory in the cartesian-closed category of quotients of countably-based spaces, which has an effective interpretation.
The approach extends existing work on probability via valuations and random variables in metric spaces via limits of Cauchy sequences.
Ultimately, we hope that this work will form a basic for practical software tools for the rigorous computational analysis of stochastic systems.
\vspace{\baselineskip}

\noindent
\textbf{Acknowledgement:} The author would like to thank Bas Spitters for many interesting discussions on measurable functions and type theory, and pointing out the connection with monads.

\bibliographystyle{alpha}
\bibliography{computablestochastic}

\begin{thebibliography}{HRW11}

\bibitem[AM02]{AlvarezManilla2002}
Mauricio Alvarez-Manilla.
\newblock Extension of valuations on locally compact sober spaces.
\newblock {\em Topology Appl.}, 124:397--433, 2002.

\bibitem[BB85]{BishopBridges1985}
Errett Bishop and Douglas Bridges.
\newblock {\em Constructive analysis}, volume 279 of {\em Grundlehren der
  Mathematischen Wissenschaften}.
\newblock Springer, 1985.

\bibitem[BC72]{BishopCheng1972}
Errett Bishop and Henry Cheng.
\newblock {\em Constructive measure theory}.
\newblock American Mathematical Society, 1972.

\bibitem[Bra01]{Brattka2001}
Vasco Brattka.
\newblock Computable versions of {Baire}'s category theorem.
\newblock In {\em Proc. 26th International Symposium on Mathematical
  Foundations of Computer Science}, pages 224--235. Springer, 2001.

\bibitem[Bra05]{Brattka2005}
Vasco Brattka.
\newblock Effective {Borel} measurability and reducibility of functions.
\newblock {\em Math. Logic Quarterly}, 51:19--44, 2005.

\bibitem[Cha72]{Chan1972}
Yuen-Kwok Chan.
\newblock A constructive approach to the theory of stochastic processes.
\newblock {\em Transactions of the American Mathematical Society}, 165:37--44,
  1972.

\bibitem[Col09]{Collins2009CIE}
Pieter Collins.
\newblock Computable types for dynamic systems.
\newblock In {\em Proceedings of the Fifth Conference on Computability in
  Europe}, 2009.

\bibitem[CS09]{CoquandSpitters2009}
Thierry Coquand and Bas Spitters.
\newblock Integrals and valuations.
\newblock {\em J. Logic Analysis}, 1(3):1--22, 2009.

\bibitem[Dob07]{Doberkat2007}
Ernst-Erich Doberkat.
\newblock {\em Stochastic Relations: Foundations for {M}arkov Transition
  Systems}.
\newblock Chapman \& Hall, 2007.

\bibitem[Eda95a]{Edalat1995DTI}
Abbas Edalat.
\newblock Domain theory and integration.
\newblock {\em Theor. Comput. Sci.}, 151:163--193, November 1995.

\bibitem[Eda95b]{Edalat1995DSM}
Abbas Edalat.
\newblock Dynamical systems, measures, and fractals via domain theory.
\newblock {\em Inf. Comput.}, 120:32--48, July 1995.

\bibitem[Esc04]{Escardo2004}
Mart\'{\i}n Escard\'{o}.
\newblock Synthetic topology.
\newblock {\em Electron. Notes Theor. Comput. Sci.}, 87:21--156, November 2004.

\bibitem[Esc09]{Escardo2009}
Mart\'{\i}n Escard\'{o}.
\newblock Semi-decidability of may, must and probabilistic testing in a
  higher-type setting.
\newblock {\em Electron. Notes Theor. Comput. Sci.}, 249:219--242, August 2009.

\bibitem[Eva13]{Evans2013}
Craig Evans.
\newblock {\em An Introduction to Stochastic Differential Equations}.
\newblock AMS, 2013.

\bibitem[Fri75]{Friedman1975}
Avner Friedman.
\newblock {\em Stochastic Differential Equations}.
\newblock Dover, 1975.

\bibitem[GLV11]{GoubaultLarrecqVaracca2011}
Jean Goubault-Larrecq and Daniele Varacca.
\newblock Continuous random variables.
\newblock In {\em Proceedings of the 2011 IEEE 26th Annual Symposium on Logic
  in Computer Science}, pages 97--106, Washington, DC, USA, 2011.

\bibitem[GS04]{GikhmanSkorohod2004}
I.~Gikhman and A.~Skorohod.
\newblock {\em The Theory of Stochastic Processes 1}.
\newblock Springer, 2004.

\bibitem[HR09]{HoyrupRojas2009IC}
Mathieu Hoyrup and Crist\'obal Rojas.
\newblock Computability of probability measures and {Martin}-{L\"of} randomness
  over metric spaces.
\newblock {\em Information and Computation}, 207:830--847, 2009.

\bibitem[HRW11]{HoyrupRojasWeihrauch2011}
Mathieu Hoyrup, {Crist\'obal} Rojas, and Klaus Weihrauch.
\newblock Computability of the radon-nikodym derivative.
\newblock In Benedikt {L\"owe}, Dag Normann, Ivan Soskov, and Alexandra
  Soskova, editors, {\em Models of Computation in Context}, volume 6735 of {\em
  Lecture Notes in Computer Science}, pages 132--141. Springer, 2011.

\bibitem[JP89]{JonesPlotkin1989}
C.~Jones and G.~Plotkin.
\newblock A probabilistic powerdomain of evaluations.
\newblock In {\em Proceedings of the Fourth Annual Symposium on Logic in
  computer science}, pages 186--195, Piscataway, NJ, USA, 1989.

\bibitem[Ker08]{Kersting2008}
G{\"o}tz Kersting.
\newblock Random vaiables --- without basic space.
\newblock In J.~Blath, P.~{M\"orters}, and M.~Scheutzow, editors, {\em Trends
  in Stochastic Analysis}. Cambridge University Press, 2008.

\bibitem[K{\"o}n97]{Konig1995}
H.~K{\"o}nig.
\newblock {\em Measure and Integration}.
\newblock Springer-Verlag, 1997.

\bibitem[Law04]{Lawson2004}
Jimmie~D. Lawson.
\newblock Domains, integration and `positive analysis'.
\newblock {\em Mathematical. Structures in Comp. Sci.}, 14:815--832, December
  2004.

\bibitem[Mis07]{Mislove2007}
Michael Mislove.
\newblock Discrete random variables over domains.
\newblock {\em Theor. Comput. Sci.}, 380:181--198, July 2007.

\bibitem[MW43]{MannWald1943}
H.B. Mann and A.~Wald.
\newblock On stochastic limit and order relationships.
\newblock {\em Ann. Math. Statistics}, 14(3):217--226, 1943.

\bibitem[OS10]{OConnorSpitters2010}
Russell O'Connor and Bas Spitters.
\newblock A computer-verified monadic functional implementation of the
  integral.
\newblock {\em Theor. Comput. Sci.}, 411(37):3386--3402, 2010.

\bibitem[Roh52]{Rokhlin1952}
V.~A. Rohlin.
\newblock {\em On the fundamental ideas of measure theory}, volume~71 of {\em
  Translations}.
\newblock American Mathematical Society, 1952.
\newblock Translated from Russian.

\bibitem[Sch07]{Schroder2007}
Matthias Schr\"{o}der.
\newblock Admissible representations of probability measures.
\newblock {\em Electron. Notes Theor. Comput. Sci.}, 167:61--78, January 2007.

\bibitem[Sch09]{Schroder2009}
Matthias Schr\"{o}der.
\newblock An effective {Tietze}-{Urysohn} theorem for {QCB}-spaces.
\newblock {\em J. Univers. Comput. Sci.}, 15(6):1317--1336, 2009.

\bibitem[Shi95]{Shiryaev1995}
{Al'bert}~Nikolaevich Shiryaev.
\newblock {\em Probability}.
\newblock Springer, 1995.

\bibitem[Spi03]{Spitters2003}
Bas Spitters.
\newblock {\em Constructive and intuitionistic integration theory and
  functional analysis}.
\newblock PhD thesis, Katholieke Universiteit Nijmegen, 2003.

\bibitem[Spi06]{Spitters2006}
Bas Spitters.
\newblock Constructive algebraic integration theory.
\newblock {\em Ann. Pure Appl. Logic}, 137(1-3):380--390, 2006.

\bibitem[SS06]{SchroderSimpson2006}
Matthias Schr\"{o}der and Alex Simpson.
\newblock Probabilistic observations and valuations.
\newblock {\em Electron. Notes Theor. Comput. Sci.}, 155:605--615, May 2006.

\bibitem[Str72]{Street1972}
Ross Street.
\newblock The formal theory of monads.
\newblock {\em J. Pure Appl. Math.}, 2:149--168, 1972.

\bibitem[Tix95]{Tix1995}
R.~Tix.
\newblock {\em Stetige {B}ewertungen auf topologischen {R}{\"a}umen}.
\newblock PhD thesis, Master's Thesis, Technische Universit\"at Darmstadt,
  1995.

\bibitem[Var02]{Varacca2002}
Daniele Varacca.
\newblock The powerdomain of indexed valuations.
\newblock In {\em Proceedings of the 17th Annual IEEE Symposium on Logic in
  Computer Science}, pages 299--, Washington, DC, USA, 2002.

\bibitem[Vic08]{Vickers2008}
Steven Vickers.
\newblock A localic theory of lower and upper integrals.
\newblock {\em Math. Log. Quart.}, 54(1):109--123, 2008.

\bibitem[Vic11]{Vickers2011PP}
Steven Vickers.
\newblock A monad of valuation locales.
\newblock \url{http://www.cs.bham.ac.uk/~sjv/Riesz.pdf}, 2011.

\bibitem[Wei99]{Weihrauch1999}
Klaus Weihrauch.
\newblock Computability on the probability measures on the {Borel} sets of the
  unit interval.
\newblock {\em Theor. Comput. Sci.}, 219:421--437, May 1999.

\bibitem[WI81]{WatanabeIkeda1981}
S.~Watanabe and N.~Ikeda.
\newblock {\em Stochastic Differential Equations and Diffusion Processes}.
\newblock North-Holland, 1981.

\bibitem[YMT99]{YasugiMoriTsujii1999}
M.~Yasugi, T.~Mori, and Y.~Tsujii.
\newblock Effective properties of sets and functions in metric spaces with
  computability structure.
\newblock {\em Theor. Comput. Sci.}, 219(1-2):467--486, 1999.

\end{thebibliography}

\end{document}